\DeclareMathOperator{\diag}{diag}
\newtheorem{theorem}{Theorem}[section]
\newtheorem{lemma}[theorem]{Lemma}
\theoremstyle{remark}
\newtheorem{remark}[theorem]{\bf{Remark}}
\theoremstyle{definition}
\newtheorem{assumption}[theorem]{Assumption}
\newtheorem{definition}[theorem]{Definition}
\newcommand\cbrk{\text{$]$\kern-.15em$]$}}
\newcommand\opar{\text{\,\raise.2ex\hbox{${\scriptstyle
|}$}\kern-.34em$($}}
\newcommand\cpar{\text{$)$\kern-.34em\raise.2ex\hbox{${\scriptstyle |}$}}\,}
\newcommand{\aint}{-\hspace{-0.38cm}\int}
\newcommand{\mysection}[1]{\section{#1}
\setcounter{equation}{0}}
\begin{document}

\title[An $L_q(L_p)$-theory for parabolic equations with anisotropic non-local operators]
{A regularity theory for parabolic equations with anisotropic non-local operators in $L_{q}(L_{p})$ spaces}

\thanks{The first author has been supported by Miwon Du-Myeong Fellowship. The second author has been supported by BK21 SNU Mathematical Sciences Division. The third author has been supported by the Samsung Sciences \& Technology Foundation(SSTF)’s grants (No. SSTF-BA1401-51)} 

\author{Jae-Hwan Choi}
\address{Department of Mathematical Sciences, Korea Advanced Institute of Science and Technology, 291 Daehak-ro, Yuseong-gu, Daejeon, 34141, Republic of Korea} 
\email{jaehwanchoi@kaist.ac.kr}

\author{Jaehoon Kang}
\address{Department of Mathematical Sciences, Seoul National University, Building 27, 1 Gwanak-ro, Gwanak-gu
Seoul 08826, Republic of Korea} 
\email{jhnkang@snu.ac.kr}

\author{Daehan Park}
\address{School of Mathematics, Korea Institute for Advanced Study, 85 Hoegi-ro Dongdaemun-gu, Seoul 02455, Republic of Korea.} 
\email{daehanpark@kias.re.kr}

\subjclass[2020]{45K05, 35B65, 47G20, 60H30}

\keywords{Anisotropic non-local operator, Sobolev regularity, L\'evy process}

\begin{abstract}
In this paper, we present an $L_q(L_p)$-regularity theory for parabolic equations of the form:
$$
\partial_t u(t,x)=\mathcal{L}^{\vec{a},\vec{b}}(t)u(t,x)+f(t,x),\quad u(0,x)=0.
$$
Here, $\mathcal{L}^{\vec{a},\vec{b}}(t)$ represents anisotropic non-local operators encompassing the singular anisotropic fractional Laplacian with measurable coefficients:
$$
\mathcal{L}^{\vec{a},\vec{0}}(t)u(x)=\sum_{i=1}^{d} \int_{\mathbb{R}}\left( u(x^{1},\dots,x^{i-1},x^{i}+y^{i},x^{i+1},\dots,x^{d}) - u(x) \right) \frac{a_{i}(t,y^{i})}{|y^{i}|^{1+\alpha_{i}}} \mathrm{d}y^{i} .
$$
To address the anisotropy of the operator, we employ a probabilistic representation of the solution and Calder\'on-Zygmund theory.
As applications of our results, we demonstrate the solvability of elliptic equations with anisotropic non-local operators and parabolic equations with isotropic non-local operators.
\end{abstract}

\maketitle

\tableofcontents 

\mysection{Introduction}
The study of partial differential equations (PDEs) has yielded extensive research on parabolic equations, given their ability to provide mathematical descriptions of natural and artificial phenomena. The most basic form of parabolic equations is the heat equation, represented as $\partial_{t}u = \Delta u$, which describes the diffusion of free particles. An interesting observation is that the fundamental solution to the heat equation is the transition density of Brownian motion $B_{t}$, where the Laplacian $\Delta$ acts as the infinitesimal generator of Brownian motion;
\begin{align*}
\lim_{t\downarrow 0} \frac{\mathbb{E}u(x+B_{t}) - u(x)}{t} = \Delta u(x).
\end{align*}
This observation has inspired researchers to investigate the relationship between stochastic processes and PDEs. In particular, the \emph{subordinate Brownian motion} (SBM) $X_{t}:=B_{S_{t}}$ and the corresponding PDE have been thoroughly studied, because SBM can describe not only normal diffusion but also pure jump diffusion. Here, $S_{t}$ is a real-valued increasing L\'evy process that satisfies $\mathbb{E}[\mathrm{e}^{-\lambda S_{t}}] = \mathrm{e}^{-t\phi(\lambda)}$, where $\phi$ denotes a Bernstein function. For more details, see Section \ref{23.04.19.21.02}.

There are numerous fields, including physics, engineering, and finance, where SBM is used for relevant modeling (see \textit{e.g.} \cite{C03option,fogedby1994,S03sub}). One key characteristic of SBM is its radial symmetry, which makes it particularly suitable for modeling isotropic diffusion processes like \cite{H08MRI}.
However, SBM is not applicable for the analysis or description of anisotropic diffusion models, which are frequently investigated using advanced magnetic resonance imaging (MRI) techniques such as diffusion tensor imaging (DTI) and diffusion kurtosis imaging (DKI).
In \cite{de2011anisotropic}, for instance, the authors examined DTI theory by employing a three-dimensional Lévy process $X_t:=(X^1_t,X^2_t,X_t^3)$ with independent one-dimensional $\alpha_1$-stable, $\alpha_2$-stable, $\alpha_3$-stable processes $X^1_t$, $X^2_t$, $X_t^3$, respectively, where $\alpha_1,\alpha_2,\alpha_3 \in (0,2)$.
While it is evident that each component process $X^1$, $X^2$, $X^3$ are SBMs, the process $X$ itself is not an SBM, and its radial symmetry is consequently broken.
To help readers understand, we provide detailed elucidations of this phenomenon from two perspectives, starting with the probabilistic viewpoint.
Given the independence of each component process, we can establish the following relationship:
$$
\sum_{0<s<t}|X_s^i-X_{s-}^i||X_s^j-X_{s-}^j|=0\quad a.s,\quad i,j=1,2,3,\,i\neq j.
$$
That is, there are no simultaneous jumps between component processes.
This indicates that $X$ only moves parallel to the coordinate axes.
Furthermore, given two open sets
$$
Q_1:=(-a_1,a_1)\times(-a_2,a_2)\times(-a_3,a_3),\quad Q_2=:(-a_1,a_1)\times(-a_3,a_3)\times(-a_2,a_2),
$$
which are identical up to certain rotation, we also observe that
\begin{align*}
    \mathbb{P}(X_t\in Q_1)&=\prod_{i=1}^{3}\mathbb{P}(X^i_t\in (-a_i,a_i))\\
    &\neq \mathbb{P}(X_t^1\in (-a_1,a_1))\mathbb{P}(X_t^2\in (-a_3,a_3))\mathbb{P}(X_t^3\in (-a_2,a_2))\\
    &= \mathbb{P}(X_t\in Q_2).
\end{align*}
This implies that the probability distribution of $X_t$ does not exhibit radial symmetry. Another perspective is based on analysis and PDEs.
The infinitesimal generator of $X$ is a singular anisotropic fractional Laplacian $\mathcal{A}$, given by
\begin{align}
\label{23.02.21.17.23}
    \mathcal{A}u(x)=\int_{\mathbb{R}^{3}} \left( u(x+y)-u(x) -\nabla u(x) \cdot y \mathbf{1}_{|y|\leq1} \right) \pi(\mathrm{d}y),
\end{align}
where $y = (y_1,y_2,y_3) \in \mathbb{R}^3$, and $\pi(\mathrm{d}y)$ is defined by
\begin{align*}
    \pi(\mathrm{d}y) &:= c_{1}|y_{1}|^{-1-\alpha_{1}}\mathrm{d}y_{1}\,\epsilon_{0}(\mathrm{d}y_{2},\mathrm{d}y_{3}) + c_{2}|y_{2}|^{-1-\alpha_{2}}\mathrm{d}y_{2}\,\epsilon_{0}(\mathrm{d}y_{1},\mathrm{d}y_{3})\\
    &\quad+c_{3}|y_{3}|^{-1-\alpha_{3}}\mathrm{d}y_{3}\,\epsilon_{0}(\mathrm{d}y_{1},\mathrm{d}y_{2}).
\end{align*}
Here $c_{1},c_{2},c_{3}$ are positive constants, and $\epsilon_0$ is the Dirac measure on $\mathbb{R}^2$ centered at the origin. The operator $\mathcal{A}$ in \eqref{23.02.21.17.23} can also be represented by the Fourier multiplier operator:
\begin{equation*}
\mathcal{A} u(x)=\frac{1}{(2\pi)^d}\int_{\mathbb{R}^3}\left(-|\xi_1|^{\alpha_1}-|\xi_2|^{\alpha_2}-|\xi_3|^{\alpha_3}\right)\mathcal{F}[u](\xi)\mathrm{e}^{ix\cdot\xi}\mathrm{d}\xi,\quad \xi:=(\xi_1,\xi_2,\xi_3),
\end{equation*}
where $\mathcal{F}[u]$ is the Fourier transform of $u$. Since the Fourier multiplier of $\mathcal{A}$ is not radially symmetric, the operator $\mathcal{A}$ is not radially symmetric.

These two perspectives highlight the need for mathematical foundations for anisotropic diffusion models that differ from isotropic diffusion models.
To address this challenge, it is crucial to investigate stochastic processes and corresponding PDEs that are more general than those of SBM.
Additionally, there is a necessity to expand the mathematical tools employed in the study of isotropic diffusion models to accommodate the complexities of anisotropic diffusion models.

In this paper, we establish an $L_{q}(L_{p})$-regularity theory for parabolic equations with non-local operators having a time-measurable anisotropic symbol. Specifically, we consider the equation
\begin{align}
\label{23.02.22.15.28}
    \partial_{t}u(t,\vec{x}) = \mathcal{L}^{\vec{a},\vec{b}}(t)u(t,\vec{x}) +f(t,\vec{x}), \quad \vec{x}=(x_1,\cdots,x_{\ell})\in \mathbb{R}^{d},
\end{align}
where $d=\sum_{i=1}^{\ell} d_{i}$. Here $\mathcal{L}^{\vec{a},\vec{b}}(t)$ is a time-measurable anisotropic non-local operator defined by
\begin{equation}\label{eqn 02.15.17:45}
\begin{aligned}
&\mathcal{L}^{\vec{a},\vec{b}}(t)u(t,\vec{x})\\
&:=\sum_{j=1}^{\ell}\Big(b_j(t)\Delta_{x_j}u(t,\vec{x})\\
&\quad+\int_{\mathbb{R}^{d_j}}\left(u(t,\vec{x}+\vec{y})-u(t,\vec{x})-\vec{y}\cdot\nabla_{\vec{x}}u(t,\vec{x})1_{|\vec{y}|\leq1}\right)a_j(t,\vec{y})J_j(\mathrm{d}\vec{y})\Big),
\end{aligned}
\end{equation}
\begin{equation*}
    J_j(\mathrm{d}\vec{y}):=\mu_{j}(\mathrm{d}y_j)\epsilon_0^{d-d_j}(\mathrm{d}y_1,\cdots,\mathrm{d}y_{j-1},\mathrm{d}y_{j+1},\cdots,\mathrm{d}y_{\ell}),
\end{equation*}
where $\mu_j$ is a L\'evy measure of SBM, $\vec{y}=(y_1,y_2,\cdots,y_{\ell})\in\mathbb{R}^{d_1}\times\mathbb{R}^{d_2}\times\cdots\times\mathbb{R}^{d_{\ell}}=:\mathbb{R}^{\vec{d}}$, $a_j(t,\vec{y})$ and $b_j(t)$ are bounded measurable coefficients (see \eqref{23.03.06.12.24} for detail), and $\epsilon_{0}^{d-d_j}$ is the centered Dirac measure defined on $\mathbb{R}^{d-d_j}$.
We would like to emphasize that our results hold without any assumptions on the regularity or symmetry of coefficients $a_j(t,\vec{y})$ and $b_j(t)$.
Our main result is to prove the existence of a unique solution for \eqref{23.02.22.15.28} satisfying the following estimate:
$$
\int_{0}^{T} \left( \int_{\mathbb{R}^{d}} |\mathcal{L}^{\vec{a},\vec{b}}(t)u(t,x)|^{p} + |u(t,x)|^{p} \mathrm{d}x \right)^{q/p} \mathrm{d}t \leq C \int_{0}^{T} \left( \int_{\mathbb{R}^{d}} |f(t,x)|^{p} \mathrm{d}x  \right)^{q/p} \mathrm{d}t
$$
for $p,q\in(1,\infty)$, and $0<T<\infty$.

The operator $\mathcal{L}^{\vec{a},\vec{b}}(t)$ in \eqref{eqn 02.15.17:45} generalizes the singular fractional Laplacian in \eqref{23.02.21.17.23}.
Since $\mathcal{L}^{\vec{a},\vec{b}}(t)$ is not radial and its symbol is quite singular, the approach based on radial decay of symbol (see \textit{e.g.} \cite{kim2013parabolic,kim16,KKL15}) cannot be applied directly. 
Similarly, it seems nontrivial to apply \cite[Theorem 8.7]{P13} to treat $L_{q}(L_{p})$-theory for our equation. See Remark \ref{rmk 02.14.16:17} for more details.

To overcome these difficulties, we use both the Calder\'on-Zygmund theory and stochastic process theory. The probability theory allows us to solve enough \eqref{23.02.22.15.28} with coefficient $\vec{a}(t,\vec{y})=(1,\dots,1)\in\mathbb{R}^{\ell}$, and $\vec{b}(t)=\vec{b}\in\mathbb{R}^{\ell}$, where $\vec{b}$ is a constant vector. Then, we exploit the independence of stochastic processes to represent the fundamental solution of \eqref{23.02.22.15.28}, denoted by $p(t,x)$, as a product of transition densities $p_i(t,x_i)$ corresponding to $d_i$-dimensional SBMs with Bernstein functions $\phi_i$;
$$
p(t,x)=p_1(t,x_1)\times \cdots\times p_{\ell}(t,x_{\ell})=\prod_{i=1}^{\ell}p_i(t,x_i).
$$
To estimate $p_i$, we impose a lower scaling condition on $\phi_i$:
\begin{align}\label{23.02.22.16.03}
c_i \left(\frac{R}{r}\right)^{\delta_i}\leq\frac{\phi_i(R)}{\phi_i(r)}, \qquad 0<r<R<\infty \quad (c_i>0, \delta_i\in(0,1]).
\end{align}
This condition is commonly used in the study of SBM (see \textit{e.g.} \cite{CK10,kim2014global,mimica2016heat}).
Under the lower scaling condition \eqref{23.02.22.16.03}, we can obtain upper bounds for each transition density $p_i$ as well as its arbitrary order derivative (see Theorem \ref{pestimate}). Based on the upper bounds of $p_i$, by applying the Calder\'on-Zygmund theory, we can obtain a BMO estimates for the solution (see Theorem \ref{23.02.22.17.33}):
$$
\left\|\mathcal{L}^{\vec{a},\vec{b}} u\right\|_{BMO(\mathbb{R}^{d+1})}\leq C\|f\|_{L_{\infty}(\mathbb{R}^{d+1})}.
$$
Since the $L_2$-estimates of the solution can be obtained easily from the Fourier analysis (see Lemma \ref{22estimate}), we use the Marcinkiewicz interpolation theorem to derive $L_p$-estimates of the solution. Based on $L_p$-estimates, we apply the $L_p$-valued Calder\'on-Zygmund theory to obtain $L_q(L_p)$ estimates of the solution.

Let us discuss a selection of results from the relevant literature. 
The form
\begin{equation}
\label{23.04.25.11.15}
    \mathcal{L} u(x) = \int_{\mathbb{R}^{d}} \left( u(x+y) - u(x) -(\nabla u(x) \cdot y) \chi_{\alpha}(y) \right)  \frac{m^{(\alpha)}(x,y)}{|y|^{d+\alpha}} \mathrm{d}y, \quad \alpha\in(0,2)
\end{equation}
is one of the simplest anisotropic non-local operators.
To the best of the authors knowledge, for the $L_p$-theory of parabolic equations with \eqref{23.04.25.11.15} was first developed in \cite{mik1992} (see also references therein). 
Here,  $m^{(\alpha)}(x,y)$ is uniformly continuous in $x$, differentiable in $y$ upto order $\lfloor d/2 \rfloor +1$, and $\chi_{\alpha}(y)=\mathbf{1}_{\alpha=1}\mathbf{1}_{|y|\leq 1} + \mathbf{1}_{\alpha\in(1,2)}$.
The $L_p$-theory of parabolic equations with operators of the form \eqref{23.04.25.11.15} has been investigated from various angles.
One line of research involves relaxing the regularity assumptions for $m^{(\alpha)}(x,y)$, as in \cite{DJK23nonlocal,DY23nonlocal,DK12elliptic,MP14cauchy,zhang2013lp}.
In \cite{DJK23nonlocal,DY23nonlocal}, for example, $m^{(\alpha)}(x,y)$ is merely measurable in $y$.
Another direction generalizes the principal part $|y|^{-d-\alpha}\mathrm{d}y$ to isotropic L\'evy measures with $m^{(\alpha)}(x,y)=1$ as discussed in \cite{kim2019L, zhang2013p}.
We note that the behavior of jumping kernels therein is dominated by isotropic jumping kernels.
Several results exist for anisotropic non-local operators, such as
$$
\mathcal{L}_{1}=-(-\Delta_{x_{1}})^{\alpha_1/2} - (-\Delta_{x_{2}})^{\alpha_2/2}.
$$
In \cite{mikulevivcius2017p, mikulevivcius2019cauchy}, the $L_p$-theory for parabolic equations with scalable operators was established, where a specific example of scalable operators is $\mathcal{L}_1$ with $\alpha_1=\alpha_2$.
However, an explicit description of $\mathcal{L}_1$ is not available in the literature. 
Recently, the $L_{p}$-theory of homogeneous parabolic equations was addressed in \cite{CK2020}, which partially fills this gap.
It is worth noting that the aforementioned results cover only the anisotropic non-local operators with the same order, such as $\alpha_1=\alpha_2$.

In addition to the $L_p$-theory, various researchers have investigated similar results for integro-differential equations with analogous integro-differential operators.
For instance, in \cite{depablo2020}, the well-posedness and regularity theory of very weak solutions to anisotropic non-local parabolic equations with singular forcing that generalize $\mathcal{L}$ by replacing $\mathrm{d}y$ with $\rho^{d-1}\mathrm{d}\rho \mu(\mathrm{d}\theta)$, with non-symmetric spherical measure $\mu(\mathrm{d}\theta)$, was also developed. 
However, these results also cover only the anisotropic non-local operator with same order.
We can also find researches on the H\"older regularity of solutions to equations associated with anisotropic non-local operators.
In \cite{BC10}, the authors examined a system of stochastic differential equations
\begin{equation*}
\mathrm{d} Y_t^i=\sum_{j=1}^{d}A_{ij}(Y_{t-})\mathrm{d} Z_t^j,
\end{equation*}
where $Z^1_t, \dots, Z^d_t$ are independent one-dimensional $\alpha$-stable processes.
They studied H\"older regularity of harmonic functions with respect to $Y=(Y^1, \dots, Y^d)$ under certain assumptions on the matrix $A$.
For the case where $Z^i$'s are independent $\alpha_i$-stable processes with different $\alpha_i$, a similar result was obtained in \cite{Chaker20} (see \cite{Chaker19} for the uniqueness of weak solutions for diagonal matrix $A$).  
In \cite{CK20}, the authors investigated Poisson equations with singular anisotropic operators such as $\mathcal{A}$ in \eqref{23.02.21.17.23}.
They achieved H\"older regularity of elliptic equations with singular anisotropic operators by using the weak Harnack inequality in the general framework of bounded measurable coefficients.
It is important to note that Harnack inequality for $\mathcal{A}$ in \eqref{23.02.21.17.23} does not hold even if $\alpha_1=\alpha_2=\alpha_3$ (\textit{cf}. \cite[Section 3]{BC10}).
The results in \cite{CK20} were extended to parabolic equations in \cite{CKW19}. See also \cite{KRS21} for the regularity of semigroup $P^Y_t$ associated with $Y$.
The regularity theory for fully nonlinear integro-differential equations associated with anisotropic non-local operators of the form
$$
\mathcal{L}_2u(x)=\int_{\mathbb{R}^d}\frac{u(x+y)-u(x)-\nabla u(x)\cdot y\mathbf{1}_{|y|\leq1}}{|y_1|^{d+\alpha_1}+\cdots+|y_d|^{d+\alpha_d}}\mathrm{d}y
$$
was developed in \cite{CLU14nonlinear}.
In \cite{Leit20}, the author investigated the anisotropic version of Caffarelli-Silvestre's extension problem (see \cite{C2007extension}) deducing $\mathcal{L}_2$. Both studies in \cite{CLU14nonlinear,Leit20} exploited the geometric aspect of the jump measure, which differs from $\pi(\mathrm{d}y)$ in \eqref{23.02.21.17.23}.
In \cite{SL21}, the H\"older regularity theory with operators more general than $\mathcal{L}_2$ was also investigated.
We also address a very recent result \cite{Leit2023} which develops $L_p$-regularity theory for elliptic equations with type of $\mathcal{L}_2$.

In this article, we only address the zero-initial value problem, as the trace space of the solution $u$ has not yet been characterized.
However, further research is currently underway, including contributions from the first author of this paper.
These efforts aim to propose a trace theorem applicable to our equations.
In the next publication based on the trace theorem, we plan to demonstrate the weighted regularity result with initial data.

We finish the introduction with some notations. We use $``:="$ or $``=:"$ to denote a definition. The symbol $\mathbb{N}$ denotes the set of positive integers and $\mathbb{N}_0:=\mathbb{N}\cup\{0\}$. Also we use $\mathbb{Z}$ to denote the set of integers. For any $a\in \mathbb{R}$, we denote $\lfloor a \rfloor$ the greatest integer less than or equal to $a$. As usual $\mathbb{R}^d$ stands for the Euclidean space of points $x=(x^1,\dots,x^d)$. We set
$$
B_r(x):=\{y\in \mathbb{R}^{d} : |x-y|<r\}, \quad \mathbb{R}_+^{d+1} := \{(t,x)\in\mathbb{R}^{d+1} : t>0 \}.
$$
For $i=1,\ldots,d$,
multi-indices $\sigma=(\sigma_{1},\ldots,\sigma_{d})$,
 and functions $u(t,x)$ we set
$$
\partial_{x^{i}}u=\frac{\partial u}{\partial x^{i}}=D_{i}u,\quad D^{\sigma}u=D_{1}^{\sigma_1}\cdots D_{d}^{\sigma_d}u,\quad|\sigma|=\sigma_{1}+\cdots+\sigma_{d}.
$$
We also use the notation $D_{x}^{m}$ for arbitrary partial derivatives of
order $m$ with respect to $x$.
For an open set $\mathcal{O}$ in $\mathbb{R}^{d}$ or $\mathbb{R}^{d+1}$, $C_c^\infty(\mathcal{O})$ denotes the set of infinitely differentiable functions with compact support in $\mathcal{O}$. By 
$\mathcal{S}=\mathcal{S}(\mathbb{R}^d)$ we denote the  class of Schwartz functions on $\mathbb{R}^d$.
For $p\geq1$, by $L_{p}$ we denote the set
of complex-valued Lebesgue measurable functions $u$ on $\mathbb{R}^{d}$ satisfying
\[
\left\Vert u\right\Vert _{L_{p}}:=\left(\int_{\mathbb{R}^{d}}|u(x)|^{p} \mathrm{d}x\right)^{1/p}<\infty.
\]
Generally, for a given measure space $(X,\mathcal{M},\mu)$, $L_{p}(X,\mathcal{M},\mu;F)$
denotes the space of all $F$-valued $\mathcal{M}^{\mu}$-measurable functions
$u$ so that
\[
\left\Vert u\right\Vert _{L_{p}(X,\mathcal{M},\mu;F)}:=\left(\int_{X}\left\Vert u(x)\right\Vert _{F}^{p}\mu(\mathrm{d}x)\right)^{1/p}<\infty,
\]
where $\mathcal{M}^{\mu}$ denotes the completion of $\mathcal{M}$ with respect to the measure $\mu$.
We also denote by $L_{\infty}(X,\mathcal{M},\mu;F)$ the space of all $\mathcal{M}^{\mu}$-measurable functions $f : X \to F$ with the norm
$$
\|f\|_{L_{\infty}(X,\mathcal{M},\mu;F)}:=\inf\left\{r\geq0 : \mu(\{x\in X:\|f(x)\|_F\geq r\})=0\right\}<\infty.
$$
If there is no confusion for the given measure and $\sigma$-algebra, we usually omit the measure and the $\sigma$-algebra. 
For any given function $f:X \to \mathbb{R}$, we denote its inverse (if it exists) by $f^{-1}$. Also, for $\nu \in \mathbb{R}\setminus\{-1\}$ and nonnegative function $f$, we denote $f^{\nu}(x)=(f(x))^{\nu}$.
We denote $a\wedge b := \min\{a,b\}$ and $a\vee b:=\max\{a,b\}$. By $\mathcal{F}$ and $\mathcal{F}^{-1}$ we denote the $d$-dimensional Fourier transform and the inverse Fourier transform respectively, i.e.
$$
\mathcal{F}[f](\xi):=\int_{\mathbb{R}^d} \mathrm{e}^{-i\xi\cdot x} f(x)\mathrm{d}x, \quad \mathcal{F}^{-1}[f](\xi):=\frac{1}{(2\pi)^d}\int_{\mathbb{R}^d} \mathrm{e}^{i\xi\cdot x} f(x)\mathrm{d}x. 
$$
For any $a,b>0$, we write $a\simeq b$ if there is a constant $c>1$ independent of $a,b$ such that $c^{-1}a\leq b\leq ca$. For any complex number $z$, we denote $\Re[z]$ and $\Im[z]$ as the real and imaginary parts of $z$. Finally if we write $N=N(\dots)$, this means that the constant $N$ depends only on what are in the parentheses. The constant $N$ can differ from line to line.

\mysection{Settings and Main results} \label{Main result sec}
\subsection{Settings}
\label{23.04.19.21.02}
In this subsection, we review the basic definition and properties of Bernstein functions and introduce the concept of IASBMs.

$\bullet$ \textbf{Definition and Representation of Bernstein functions.}
A \emph{Bernstein function} is an infinitely differentiable function $\phi:(0,\infty)\to(0,\infty)$ that satisfies two conditions: $\phi(0+)=0$ and $(-1)^nD^n\phi(\lambda)\leq0$ for all $n\in\mathbb{N}$ and $\lambda\in(0,\infty)$. In other words, the function $\phi$ is non-negative, non-decreasing and concave.
It is well-known that every Bernstein function can be represented in the following form (see, for instance, \cite[Theorem 3.2]{schbern}):
\begin{align}
\label{23.03.08.12.52}
    \phi(\lambda)=b\lambda+\int_{0}^{\infty}\left(1-\mathrm{e}^{-\lambda t}\right)\,\mu(\mathrm{d}t),
\end{align}
where $b\geq0$ is the \emph{drift} of $\phi$ and $\mu$ is a non-negative measure on $\mathbb{R}_+$ satisfying the condition
\begin{align*}
\int_0^{\infty}\left(1\wedge t\right)\mu(\mathrm{d}t)<\infty.
\end{align*}
The measure $\mu$ is called the \emph{L\'evy measure} of $\phi$.
For a vector of Bernstein functions $\vec{\phi}:=(\phi_1,\cdots,\phi_{\ell})$, we say that $\vec{b}:=(b_1,\cdots,b_{\ell})\in\mathbb{R}_+^{\ell}$ is the \emph{drift} of $\vec{\phi}$ if each $b_i$ is the drift of $\phi_i$. Similarly, if we consider a vector of L\'evy measures $\vec{\mu}(\mathrm{d}\vec{t}):=\left(\mu_{1}(\mathrm{d}t_1),\cdots,\mu_{\ell}(\mathrm{d}t_{\ell})\right)$, where $\vec{t}=(t_1,\cdots,t_{\ell})\in\mathbb{R}_+^{\ell}$, we say that $\vec{\mu}$ is the \emph{L\'evy measure} of $\vec{\phi}$ if each $\mu_i$ is the L\'evy measure of $\phi_i$.

$\bullet$ \textbf{Relation between Bernstein functions and subordinators.}
Bernstein functions have a natural connection with subordinators, which are real-valued increasing L\'evy processes that start at $0$. Specifically, it is well-known (\textit{e.g.} \cite{sato1999,schbern}) that a function $\phi$ is a Bernstein function if and only if it is the Laplace exponent of a subordinator. That is, for a given Bernstein function $\phi$, there exists a unique subordinator $S=(S_t)_{t\geq0}$ on a probability space $(\Omega, \mathcal{F}, \mathbb{P})$ such that
$$\mathbb{E}[\mathrm{e}^{-\lambda S_t}]:=\int_{\Omega} \mathrm{e}^{-\lambda S_t(\omega)} \,\mathbb{P}(\mathrm{d}\omega)=\mathrm{e}^{-t\phi(\lambda)},\quad \forall (t,\lambda)\in[0,\infty)\times\mathbb{R}_+.
$$
This means that the probability distribution of the subordinator $S_t$ at any fixed time $t>0$ is completely determined by the Bernstein function $\phi$, and vice versa.

$\bullet$ \textbf{Subordinate Brownian motion(SBM).} Let $B=(B_t)_{t\geq0}$ be a $d$-dimensional Brownian motion that is independent of the subordinator $S$. The process $X=(B_{S_t})_{t\geq0}$ is a SBM, and its infinitesimal generator is defined by
$$
\phi(\Delta_{x})f(x)=\phi(\Delta)f(x):=\lim_{t \downarrow 0} \frac{ \mathbb{E}[f(x+X_t)] -f(x)}{t}.
$$
It is well-known that $X$ is an $\mathbb{R}^d$-valued radial symmetric L\'evy process, and its characteristic exponent is given by $\phi(|\cdot|^2)$, \emph{i.e.},
\begin{equation*}
\mathbb{E}[\mathrm{e}^{iX_t\cdot\xi}] = \mathrm{e}^{-t\phi(|\xi|^2)},\quad\forall(t,\xi)\in[0,\infty)\times\mathbb{R}^d.
\end{equation*}
According to \cite[Theorem 31.5]{sato1999}, $\phi(\Delta_x)$ has the following two representations:
\begin{equation}\label{fourier200408}
    \begin{aligned}
\phi(\Delta_x)f(x) &= b\Delta_x f + \int_{\mathbb{R}^d}\left(f(x+y)-f(x)-\nabla_xf(x)\cdot y\mathbf{1}_{|y|\leq1}\right)J(y)\mathrm{d}y,\\
&= \mathcal{F}^{-1}[-\phi(|\cdot|^2)\mathcal{F}[f]](x),
\end{aligned}
\end{equation}
where $J(y) := j(|y|)$ and $j:(0,\infty)\rightarrow(0,\infty)$ is given by
$$
j(r)=\int_{(0,\infty)} (4\pi t)^{-d/2} \mathrm{e}^{-r^2/(4t)} \mu(\mathrm{d}t).
$$
Here $b$ is the drift of $\phi$ and $\mu$ is the L\'evy measure of $\phi$. Also we call $J(y)=j(|y|)$ the jump kernel of $\phi(\Delta_{x})$.

$\bullet$ \textbf{Independent Array of Subordinate Brownian motion(IASBM).}

Throughout the rest of this paper, we assume that the spatial dimension $d$ satisfies the following equation:
$$
d=\vec{d}\cdot\vec{1}=\sum_{i=1}^{\ell}d_{i}, \quad \vec{1}:=(1,\cdots,1),\,\vec{d}:=(d_1,\cdots,d_{\ell})\in \mathbb{N}^{\ell},
$$
where $d_i$ represents the $i$-th subdimension and $\vec{1}$ is a vector of all ones.
For any vector $\vec{x} \in \mathbb{R}^{\vec{d}} := \mathbb{R}^{d_{1}} \times \cdots \times \mathbb{R}^{d_{\ell}}$, we use the notation
$$
\vec{x}=(x_{1},\dots,x_{\ell}), \quad x_{i}=(x^{1}_{i},\dots,x^{d_{i}}_{i})\in \mathbb{R}^{d_{i}} \quad (i=1,\dots,\ell).
$$
We say that an $\mathbb{R}^{\ell}$-valued stochastic process $\vec{S} = (S^1,\cdots,S^{\ell})$ is a \emph{vector of subordinators with Laplace exponent $\vec{\phi}=(\phi_1,\cdots,\phi_{\ell})$} if $S^1, \cdots, S^{\ell}$ are subordinators satisfying $\mathbb{E}[\mathrm{e}^{-\lambda{S^{i}_{t}}}]=\mathrm{e}^{-t\phi_i(\lambda)}$ for all $1 \leq i \leq \ell$.
Let $\vec{B} = (B^1,\cdots,B^{\ell}) \in \mathbb{R}^{\vec{d}}$ be a vector of independent Brownian motions. It is easy to check that $\vec{B}$ is a $d$-dimensional Brownian motion.
We say that an $\mathbb{R}^{\vec{d}}$-valued stochastic process $\vec{X} = (X^1,\cdots,X^{\ell}) := (B^1_{S^1},\cdots,B^{\ell}_{S^{\ell}}) \in \mathbb{R}^{\vec{d}}$ is an IASBM corresponding to $\vec{\phi}$ if $X_1,\cdots,X_{\ell}$ are independent SBMs.

Now we investigate fundamental properties of IASBMs, including characteristic exponents and infinitesimal generators. Let $\vec{X}$ be an IASBM corresponding to $\vec{\phi}$. We note that $X$ is an $\mathbb{R}^d$-valued L\'evy process, and its characteristic exponent is given by
\begin{equation}
\label{23.04.18.11.25}
    \mathbb{E}[\mathrm{e}^{i\vec{\xi}\cdot \vec{X}_t}]=\prod_{i=1}^{\ell}\exp\left(-t\phi_i(|\xi_i|^2)\right),\quad \vec{\xi}=(\xi_1,\cdots,\xi_{\ell})\in\mathbb{R}^{\vec{d}}.
\end{equation}
Since each component of $\vec{X}$ is independent, the infinitesimal generator of $\vec{X}$ can be expressed as
\begin{align*}
&\lim_{t\downarrow0}\frac{\mathbb{E}[f(\vec{x}+\vec{X}_t)]-f(\vec{x})}{t}=\sum_{i=1}^{\ell}\phi_{i}(\Delta_{x_i})f(x)=:(\vec{\phi}\cdot\Delta_{\vec{d}})f(x),
\end{align*}
where $\Delta_{\vec{d}}:=(\Delta_{x_1},\Delta_{x_2},\cdots,\Delta_{x_{\ell}})$, $\Delta_{x_i}$ is the Laplacian operator on $\mathbb{R}^{d_i}$ and
$$
\phi_i(\Delta_{x_i})f(\vec{x}):=\lim_{t\downarrow0}\frac{\mathbb{E}[f(x_1\cdots,x_{i-1},x_i+X_t^i,x_{i+1},\cdots,x_{\ell})]-f(\vec{x})}{t},\quad 1\leq i\leq \ell.
$$
Now, we show that $(\vec{\phi}\cdot\Delta_{\vec{d}})f$ can be represented as given in equation \eqref{fourier200408}.
To begin with, we define the Fourier transform of a function $f:\mathbb{R}^{\vec{d}}\to\mathbb{R}$ with respect to the $i$-th coordinate as $\mathcal{F}_{d_{i}}[f] := \int_{\mathbb{R}^{d_{i}}} f(x_{1},\dots,x_{i},\dots,x_{\ell}) \mathrm{e}^{-i \xi_{i}\cdot x_{i}}\mathrm{d}x_{i}$, for $1\leq i\leq \ell$, where $d_{i}$ denotes the dimension of the $i$-th coordinate. We also define the inverse Fourier transform $\mathcal{F}^{-1}_{d_{i}}$ in a similar way.
Using these notions, we can express the operator $\vec{\phi}\cdot\Delta_{\vec{d}}$ as
\begin{align*}
    (\vec{\phi}\cdot\Delta_{\vec{d}})f(\vec{x})&=\vec{b}\cdot\Delta_{\vec{d}}f(\vec{x})+\int_{\mathbb{R}^{d}}(f(\vec{x}+\vec{y})-f(\vec{x})-\nabla_{\vec{x}}f(\vec{x})\cdot \vec{y}\mathbf{1}_{|\vec{y}|\leq1})\vec{1}\cdot\vec{J}(\mathrm{d}\vec{y})\\
    &=\mathcal{F}^{-1}_{d}\left[-\sum_{i=1}^{\ell}\phi_{i}(|\xi_i|^2)\mathcal{F}_{d_i}[f]\right](\vec{x}).\nonumber
\end{align*}
where $\vec{b}$ is the drift of $\vec{\phi}$, $\vec{J}(\mathrm{d}\vec{y})$ is a vector of L\'evy measures defined by
\begin{equation}\label{23.03.04.19.19}
\begin{gathered}
    \vec{J}(\mathrm{d}\vec{y})=(J_1(\mathrm{d}\vec{y}),\cdots,J_{\ell}(\mathrm{d}\vec{y})),\\ J_i(\mathrm{d}\vec{y}):=J_i(y_i)\mathrm{d}y_{i}\epsilon_0^i(\mathrm{d}y_1,\cdots,\mathrm{d}y_{i-1},\mathrm{d}y_{i+1},\cdots,\mathrm{d}y_{\ell}),
\end{gathered}
\end{equation}
$J_i(y_i):=\int_{(0,\infty)}(4\pi t)^{-d_{i}/2}\mathrm{e}^{-|y_i|^2/(4t)}\mu_{i}(\mathrm{d}t)$ and $\mu_i$ is the L\'evy measure of $\phi_i$.

\begin{remark}
\label{rmk 04.05.15:28}
When $\ell=1$, we can directly see that
$$
\vec{\phi}\cdot\Delta_{\vec{d}}=\phi(\Delta_x).
$$
\end{remark}

\subsection{Main results}
In this subsection, we present the main result of this paper. The theorem is built on the following key assumption.
\begin{assumption}
\label{23.03.03.16.04}
Suppose that $\phi_1,\cdots,\phi_{\ell}$ are Bernstein functions satisfying that there exist constants $\delta_0\in (0,1]$ and $c_{0}>0$  such that
\begin{equation}\label{e:H}
\begin{gathered}
c_{0} \left(\frac{R}{r}\right)^{\delta_{0}}\leq\min\left(\frac{\phi_{1}(R)}{\phi_{1}(r)},\cdots,\frac{\phi_{\ell}(R)}{\phi_{\ell}(r)}\right), \qquad 0<r<R<\infty.
\end{gathered}
\end{equation}
\end{assumption}

\begin{remark}
(i) Assumption \ref{23.03.03.16.04} implies that every $\phi_{i}$ satisfies scaling condition \eqref{e:H}. If $\phi_{i}(r)=r^{\alpha_{i}}$ with $\alpha_{i}\in(0,1]$ ($i=1,\dots,\ell$), then $c_{0}=1$, and $\delta_{0}=\min\{\alpha_{1},\dots,\alpha_{\ell}\}$ fulfill  \eqref{e:H}. Thus Assumption \ref{23.03.03.16.04} covers vectors consisting of stable processes and Brownian motions. Also,  Assumption \ref{23.03.03.16.04} combined with the concavity of $\phi$ gives 
 \begin{equation}
\label{phiratio}
c_{0}\left(\frac{R}{r}\right)^{\delta_0}\leq \frac{\phi_{i}(R)}{\phi_{i}(r)} \leq \frac{R}{r} , \qquad 0<r<R<\infty.
\end{equation}

(ii) Such scaling conditions are closely related to behavior of SBM (or its transition density). For example, if
$$
c_{1}\left( \frac{R}{r} \right)^{\delta_{1}} \leq \frac{\phi(R)}{\phi(r)} \leq c_{2}\left( \frac{R}{r} \right)^{\delta_{2}} \quad \forall\, 0<r<R<\infty, \quad (c_{1},c_{2}>0,\quad \delta_{1},\delta_{2}\in(0,1)),
$$
then the corresponding heat kernel is comparable to $
(\phi^{-1}(t^{-1}))^{d/2} \wedge t \phi(|x|^{-2})|x|^{-d}$
(\textit{e.g.} \cite[Section 5]{kim2014global}). We also remark that scaling conditions for characteristic exponent like \eqref{e:H} or for jump kernels are widely used in the study of Markov processes. We refer to \cite{BKKL19, BGR14a, G14harnack,GKK20markov,kim2013parabolic, KW22, KM12, kim2014global, mimica2016heat}.
\end{remark}

Next, we introduce Sobolev spaces associated with the operator $\vec{\phi}\cdot\Delta_{\vec{d}}$ will serve as our solution spaces.
\begin{definition}\label{defn defining}
Let $1<p,q<\infty$, $\gamma\in \mathbb{R}$, and $0<T<\infty$. 
For a tempered distribution $u$, we define $(1-\vec{\phi}\cdot\Delta_{\vec{d}})^{\gamma/2}u$ as
\begin{align*}
    \mathcal{F}[(1-\vec{\phi}\cdot\Delta_{\vec{d}})^{-\gamma/2}u](\vec{\xi})&:=\left( 1-\mathcal{F}[\vec{\phi}\cdot\Delta_{\vec{d}}](\vec{\xi}) \right)^{\gamma/2}\mathcal{F}[u](\vec{\xi})\\
    &:= \left( 1+\sum_{i=1}^{\ell}\phi_{i}(|\xi_{i}|^{2}) \right)^{\gamma/2}\mathcal{F}[u] (\vec{\xi}).
\end{align*}

$(i)$ The space $H^{\vec{\phi},\gamma}_{p}=H^{\vec{\phi},\gamma}_{p}(\mathbb{R}^{d})$ is a closure of $\mathcal{S}(\mathbb{R}^d)$ under the norm
$$
\|u\|_{H^{\vec{\phi},\gamma}_{p}}:= \|(1-\vec{\phi}\cdot\Delta_{\vec{d}})^{\gamma/2}u\|_{L_{p}}<\infty.
$$

$(ii)$ The space $H^{\vec{\phi},\gamma}_{q,p}(T)$ is a set of measurable mappings $u:(0,T)\to H^{\vec{\phi},\gamma}_{p}$ satisfying
$$
\|u\|_{H^{\vec{\phi},\gamma+2}_{q,p}(T)}:= \left(\int_{0}^{T} \|u(t,\cdot)\|^{q}_{H^{\vec{\phi},\gamma+2}_{p}} \mathrm{d}t\right)^{1/q} <\infty.
$$
We also denote $L_{q,p}(T):=H^{\vec{\phi},0}_{q,p}(T)$.

$(iii)$ We denote $C^{\infty}_{p}([0,T]\times \mathbb{R}^{d})$ as a collection of functions $u(t,x)$ such that $ D^{m}_{x}u \in C([0,T];L_{p})$ for all $m\in \mathbb{N}_{0}$. We also denote $C^{1,\infty}_{p}([0,T]\times \mathbb{R}^{d})$ as a collection of functions $u(t,x)$ such that $u\in C_p^{\infty}([0,T]\times\mathbb{R}^d)$ and $\partial_tD^{m}_{x}u \in C([0,T];L_{p})$ for all $m\in \mathbb{N}_{0}$

$(iv)$ We say that $u\in \mathbb{H}^{\vec{\phi},\gamma+2}_{q,p}(T)$ if there exists a defining sequence of functions $u_{n} \in C_{p}^{1,\infty}([0,T]\times\mathbb{R}^d)$ such that $u_{n}$ converges to $u$ in $H^{\vec{\phi},\gamma+2}_{q,p}(T)$ and $\partial_{t}u_{n}$ is Cauchy in $H^{\vec{\phi},\gamma}_{q,p}(T)$. In this case, we define $\partial_{t}u$ as the limit of $\partial_{t}u_{n}$ in $H^{\vec{\phi},\gamma}_{q,p}(T)$. For $u\in \mathbb{H}^{\vec{\phi},\gamma+2}_{q,p}(T)$, we naturally define
$$
\|u\|_{\mathbb{H}^{\vec{\phi},\gamma+2}_{q,p}(T)} := \|\partial_{t}u\|_{H^{\vec{\phi},\gamma}_{q,p}(T)} + \|u\|_{H^{\vec{\phi},\gamma+2}_{q,p}(T)}.
$$

$(v)$ For $u\in \mathbb{H}^{\vec{\phi},\gamma+2}_{q,p}(T)$, we say that $u\in \mathbb{H}^{\vec{\phi},\gamma+2}_{q,p,0}(T)$ if there is a defining sequence $u_{n}$ so that
$$
u_{n}(0,x)=0 \quad \forall\, n\in \mathbb{N}.
$$
\end{definition}
It is worth noting that if $\vec{\phi}(\lambda)=(\lambda,\cdots,\lambda)$, then $H_p^{\vec{\phi},\gamma}$ corresponds to the classical Bessel potential space $H^{\gamma}_{p}$. 
\begin{remark} \label{Hvaluedconti}
Note that the embedding $H_p^{2n} \subseteq H_p^{\vec{\phi},2n}$ is continuous for any $n\in\mathbb{N}$, as shown in \cite[Remark 3]{mikulevivcius2017p}.
\end{remark}

\begin{lemma}\label{H_p^phi,gamma space}
Let $1<p<\infty$ and $\gamma\in\mathbb{R}$.

$(i)$ The space $H_p^{\vec{\phi},\gamma}$ is a Banach space.

$(ii)$ For any $\mu\in\mathbb{R}$, the map $(1-\vec{\phi}\cdot\Delta_{\vec{d}})^{\mu/2}$ is an isometry from $H^{\vec{\phi},\gamma}_{p}$ to $H^{\vec{\phi},\gamma-\mu}_{p}$.

$(iii)$ If $\mu>0$, then we have continuous embeddings $H_p^{\vec{\phi},\gamma+\mu}\subset H_p^{\vec{\phi},\gamma}$ in the sense that
\begin{equation*}
\|u\|_{H_p^{\vec{\phi},\gamma}}\leq C \|u\|_{H_p^{\vec{\phi},\gamma+\mu}},
\end{equation*}
where the constant $C$ is independent of $u$. 

$(iv)$ For any $u\in H^{\vec{\phi},\gamma+2}_{p}$, we have
\begin{equation*}
\left(\|u\|_{H^{\vec{\phi},\gamma}_p}+\|(\vec{\phi}\cdot\Delta_{\vec{d}})u\|_{H^{\vec{\phi},\gamma}_p}\right) \simeq \|u\|_{H_p^{\vec{\phi},\gamma+2}}.
\end{equation*}
\end{lemma}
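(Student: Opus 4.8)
The statement to prove is Lemma \ref{H_p^phi,gamma space}, which collects four standard facts about the spaces $H_p^{\vec\phi,\gamma}$: completeness, the isometry property of Bessel-type operators, continuous embeddings for decreasing smoothness, and the norm equivalence $\|u\|_{H_p^{\vec\phi,\gamma}}+\|(\vec\phi\cdot\Delta_{\vec d})u\|_{H_p^{\vec\phi,\gamma}}\simeq\|u\|_{H_p^{\vec\phi,\gamma+2}}$. The whole thing is a Fourier-multiplier / functional-analytic argument, and the real technical content is a Mikhlin–Hörmander-type multiplier estimate tailored to the anisotropic symbol $\Phi(\vec\xi):=1+\sum_{i=1}^\ell\phi_i(|\xi_i|^2)$.

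\medskip

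Here is the plan. First I would record the basic symbolic calculus: writing $\Lambda^\gamma:=(1-\vec\phi\cdot\Delta_{\vec d})^{\gamma/2}$ for the Fourier multiplier with symbol $\Phi(\vec\xi)^{\gamma/2}$, the semigroup law $\Lambda^\gamma\Lambda^\mu=\Lambda^{\gamma+\mu}$ holds on $\mathcal S$ (indeed on $\mathcal S'$) simply because symbols multiply, and $\Lambda^0=\mathrm{Id}$. For $(ii)$ this is almost the whole story: by definition $\|u\|_{H_p^{\vec\phi,\gamma}}=\|\Lambda^\gamma u\|_{L_p}$, so $\|\Lambda^{\mu/2}u\|_{H_p^{\vec\phi,\gamma-\mu}}=\|\Lambda^{\gamma-\mu}\Lambda^{\mu/2}u\|_{L_p}$ — wait, one must be careful with the exponent bookkeeping: with the normalization in the statement, $\Lambda^{\mu/2}$ should map $H_p^{\vec\phi,\gamma}\to H_p^{\vec\phi,\gamma-\mu}$, and indeed $\|\Lambda^{\mu/2}u\|_{H_p^{\vec\phi,\gamma-\mu}}=\|\Lambda^{(\gamma-\mu)/2}\Lambda^{\mu/2}u\|_{L_p}=\|\Lambda^{\gamma/2}u\|_{L_p}=\|u\|_{H_p^{\vec\phi,\gamma}}$, so it is a surjective isometry with inverse $\Lambda^{-\mu/2}$. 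For $(i)$, completeness then follows by transport of structure: $\Lambda^\gamma:H_p^{\vec\phi,\gamma}\to L_p$ is by definition an isometry onto its image, the image is all of $L_p$ once we know $\mathcal S$ is dense in $L_p$ and $\Lambda^{-\gamma}\mathcal S\subseteq H_p^{\vec\phi,\gamma}$ is the defining dense subset; since $L_p$ is complete, $H_p^{\vec\phi,\gamma}$ is complete. (Alternatively: a Cauchy sequence in $H_p^{\vec\phi,\gamma}$ maps to a Cauchy sequence in $L_p$, which converges; pull back the limit.)

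\medskip

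The substantive step, which feeds both $(iii)$ and $(iv)$, is the multiplier lemma: for every real $\nu$ the function
\[
m_\nu(\vec\xi):=\frac{\bigl(1+\sum_{i=1}^\ell\phi_i(|\xi_i|^2)\bigr)^{\nu}}{\prod_{i=1}^\ell\bigl(1+\phi_i(|\xi_i|^2)\bigr)^{\nu\vee 0}}\quad\text{(and its reciprocals / shifted versions)}
\]
is an $L_p$-Fourier multiplier, i.e. satisfies the anisotropic Mikhlin condition $|\xi^\sigma D^\sigma_\xi m_\nu(\vec\xi)|\le C$ for all multi-indices $\sigma$ with entries in $\{0,1\}$. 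The key analytic inputs are: (a) Assumption \ref{23.03.03.16.04}, precisely the consequence \eqref{phiratio}, which gives $\phi_i(|\xi_i|^2)\lesssim\Phi(\vec\xi)$ and, crucially, the doubling-type bounds $\lambda\phi_i'(\lambda)\le\phi_i(\lambda)$ (from concavity) together with the lower bound $\phi_i(\lambda)/\lambda^{\delta_0}$ being essentially monotone, so that $|\xi_i|^{2k}|D^k_{|\xi_i|^2}\phi_i(|\xi_i|^2)|\lesssim\phi_i(|\xi_i|^2)$ for all $k$; and (b) elementary chain-rule bookkeeping to pass from derivatives in $|\xi_i|^2$ to derivatives in $\xi_i\in\mathbb R^{d_i}$, picking up harmless powers of $|\xi_i|$. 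Concretely: for any $\gamma,\gamma'$ with $\gamma\le\gamma'$, write $\|u\|_{H_p^{\vec\phi,\gamma}}=\|\Lambda^{\gamma}u\|_{L_p}=\|m\,\Lambda^{\gamma'}u\|_{L_p}$ where $m=\Phi^{(\gamma-\gamma')/2}$ has a nonpositive power and is trivially bounded with all its Mikhlin derivatives controlled by the above; hence $(iii)$. For $(iv)$, the $\lesssim$ direction is $\|u\|_{H_p^{\vec\phi,\gamma}}\le\|u\|_{H_p^{\vec\phi,\gamma+2}}$ (just $(iii)$) and $\|(\vec\phi\cdot\Delta_{\vec d})u\|_{H_p^{\vec\phi,\gamma}}=\|m\,\Lambda^{\gamma+2}u\|_{L_p}$ with $m(\vec\xi)=\bigl(\sum_i\phi_i(|\xi_i|^2)\bigr)/\Phi(\vec\xi)$, which is bounded by $1$ and satisfies the Mikhlin bounds by the same $\phi_i$-derivative estimates; the $\gtrsim$ direction is $\|u\|_{H_p^{\vec\phi,\gamma+2}}=\|\Lambda^{\gamma+2}u\|_{L_p}=\|m\,(\Lambda^\gamma u+\Lambda^\gamma(\vec\phi\cdot\Delta_{\vec d})u)\|_{L_p}$ with $m(\vec\xi)=\Phi(\vec\xi)/(1+\sum_i\phi_i(|\xi_i|^2))=1$ — actually that $m$ is identically $1$, so the split is exact and one only needs the triangle inequality. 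Thus $(iv)$ reduces entirely to the one multiplier bound for $\bigl(\sum_i\phi_i(|\xi_i|^2)\bigr)/\Phi(\vec\xi)$.

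\medskip

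I expect the main obstacle to be verifying the anisotropic Mikhlin condition uniformly in the Bernstein functions $\phi_i$ — that is, showing that $|\xi_i|^{2k}\,|\partial^k_{\lambda}\phi_i(\lambda)|_{\lambda=|\xi_i|^2}\lesssim \phi_i(|\xi_i|^2)$ with a constant depending only on $k$ and $\delta_0, c_0$. The clean way is: by \eqref{23.03.08.12.52}, $\lambda^k\partial_\lambda^k\phi_i(\lambda)=\lambda^k\int_0^\infty(-t)^{k}(-1)^{k+1}e^{-\lambda t}\mu_i(\mathrm dt)$ up to the drift term, and one bounds $\sup_{s>0}s^k e^{-s}<\infty$ to get $\lambda^k|\partial_\lambda^k\phi_i(\lambda)|\le C_k\int_0^\infty(1-e^{-\lambda t})\mu_i(\mathrm dt)\le C_k\phi_i(\lambda)$ for $k\ge1$ (the drift term contributes only to $k=1$ and is dominated by $\phi_i$ directly); this argument is coefficient-free and uses only the Lévy–Khintchine representation, so \eqref{e:H} is actually needed only to control the low-frequency behavior ensuring $\Phi(\vec\xi)\simeq 1+\sum_i\phi_i(|\xi_i|^2)$ has no degeneracy, which is automatic since $\phi_i>0$. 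Once this estimate is in hand, the anisotropic Mikhlin multiplier theorem (valid on $L_p(\mathbb R^{d_1}\times\cdots\times\mathbb R^{d_\ell})$, e.g. via iterated one-parameter Mikhlin or Marcinkiewicz) closes everything; I would cite it rather than reprove it. The remaining bookkeeping — chain rule from $\lambda=|\xi_i|^2$ to $\xi_i$, product rule across the $\ell$ factors, and checking that quotients and shifts by $1$ preserve the bounds — is routine and I would present it compactly.
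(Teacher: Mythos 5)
Your argument is correct but routes through a genuinely different mechanism than the paper. The paper disposes of $(iii)$ and $(iv)$ by observing that $\psi(\vec{\xi}):=\sum_{i}\phi_i(|\xi_i|^2)$ is a continuous negative definite function, so that $H_p^{\vec\phi,\gamma}$ falls squarely within the general theory of $\psi$-Bessel potential spaces: the embedding $(iii)$ is \cite[Theorem 2.3.1]{farkaspsi} and the norm equivalence $(iv)$ is \cite[Theorem 2.2.7]{farkaspsi} (after using $(ii)$ to reduce to $\gamma=0$). You instead prove the needed multiplier bounds directly: from the L\'evy--Khintchine representation you derive $\lambda^k|\phi_i^{(k)}(\lambda)|\leq C_k\,\phi_i(\lambda)$ (which is the paper's own \eqref{eqn 07.19.14.35}), convert to anisotropic Marcinkiewicz bounds for the symbols $(1+\sum_i\phi_i(|\xi_i|^2))^{-\mu/2}$ and $\bigl(\sum_i\phi_i(|\xi_i|^2)\bigr)/(1+\sum_i\phi_i(|\xi_i|^2))$, and close with the multiplier theorem. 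This is exactly the computation the paper carries out for closely related symbols in Remark \ref{23.03.07.14.34}, so your route is consistent with the paper's toolbox; it is more self-contained, and you are right that Assumption \ref{23.03.03.16.04} plays no role here (the multiplier bounds use only the Bernstein structure). Two small points. In $(iv)$, the exact identity behind the $\gtrsim$ direction is $(1-\vec\phi\cdot\Delta_{\vec d})^{(\gamma+2)/2}u=(1-\vec\phi\cdot\Delta_{\vec d})^{\gamma/2}u-(1-\vec\phi\cdot\Delta_{\vec d})^{\gamma/2}(\vec\phi\cdot\Delta_{\vec d})u$ (a minus sign, since the symbol of $\vec\phi\cdot\Delta_{\vec d}$ is $-\sum_i\phi_i(|\xi_i|^2)$); you wrote a plus, though the triangle-inequality conclusion is unaffected. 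In $(i)$, note that $(1-\vec\phi\cdot\Delta_{\vec d})^{\gamma/2}$ need not map $\mathcal S$ into $\mathcal S$ (the symbol can fail to be smooth on the coordinate hyperplanes, e.g.\ $\phi(r)=\sqrt r$ yields $|\xi|$), so the claim that its image is all of $L_p$ deserves a sentence of justification; but since $H_p^{\vec\phi,\gamma}$ is defined as the completion of $\mathcal S$ under a genuine norm, completeness is in any case immediate, in agreement with the paper's one-line remark.
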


\begin{proof}
The first and second assertions are direct consequences of the definition. Recall that the function $
\psi(\xi) := \sum_{i=1}^{\ell} \phi_{i}(|\xi_{i}|^{2})$
is a continuous negative definite function since each $\phi_{i}(|\xi_{i}|^{2})$ is continuous and negative definite (see \textit{e.g.} \cite{farkaspsi} for detail). Hence, the third assertion comes from \cite[Theorem 2.3.1]{farkaspsi}. Finally, the last assertion can be obtained by using the second assertion and  \cite[Theorem 2.2.7]{farkaspsi}.
\end{proof}

By following the proof of  \cite[Lemma 2.7]{kim2022nonlocal}, replacing time non-local operator therein with usual derivative, and using Remark \ref{Hvaluedconti}, and Lemma \ref{H_p^phi,gamma space}, we have the following properties of solution spaces.

\begin{lemma} \label{basicproperty}
Let $1<p,q<\infty$, $\gamma\in\mathbb{R}$, and $0<T<\infty$.

$(i)$ The spaces $H_{q,p}^{\vec{\phi},\gamma}(T)$ and $\mathbb{H}_{q,p}^{\vec{\phi},\gamma}(T)$ are Banach spaces.

$(ii)$ The space $\mathbb{H}_{q,p,0}^{\vec{\phi},\gamma+2}(T)$ is a closed subspace of $\mathbb{H}_{q,p}^{\vec{\phi},\gamma+2}(T)$.

$(iii)$ $C_c^\infty(\mathbb{R}^{d+1}_+)$ is dense in $\mathbb{H}_{q,p,0}^{\vec{\phi},\gamma+2}(T)$.

$(iv)$ For any $\gamma,\nu\in\mathbb{R}$, $(1-\vec{\phi}\cdot\Delta_{\vec{d}})^{\nu/2}:\mathbb{H}_{q,p}^{\vec{\phi},\gamma+2}(T)\to\mathbb{H}_{q,p}^{\vec{\phi},\gamma-\nu+2}(T)$ is an isometry, and for any  $u\in\mathbb{H}_{q,p}^{\vec{\phi},\gamma+2}(T)$
\begin{equation*} \label{spacetimederiv change}
(1-\vec{\phi}\cdot\Delta_{\vec{d}})^{\nu/2}\partial_t  u=\partial_t  (1-\vec{\phi}\cdot\Delta_{\vec{d}})^{\nu/2} u.
\end{equation*}
\end{lemma}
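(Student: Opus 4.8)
The plan is to mirror \cite[Lemma 2.7]{kim2022nonlocal}, where the only structural change is that the time non-local operator there is replaced by $\partial_t$, and the role of the classical Bessel potential space is played by $H^{\vec{\phi},\gamma}_p$; all the facts about $H^{\vec{\phi},\gamma}_p$ that the argument needs are supplied by Lemma \ref{H_p^phi,gamma space}, and the reduction of $x$-derivative estimates to classical ones is supplied by Remark \ref{Hvaluedconti}. For $(i)$, since $H^{\vec{\phi},\gamma}_p$ is a Banach space (Lemma \ref{H_p^phi,gamma space}$(i)$), the space $H^{\vec{\phi},\gamma}_{q,p}(T)=L_q((0,T);H^{\vec{\phi},\gamma}_p)$ is a Bochner space over a Banach space, hence complete. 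For $\mathbb{H}^{\vec{\phi},\gamma+2}_{q,p}(T)$, take a Cauchy sequence $(u_n)$; then $u_n\to u$ in $H^{\vec{\phi},\gamma+2}_{q,p}(T)$ and $\partial_t u_n\to v$ in $H^{\vec{\phi},\gamma}_{q,p}(T)$. Testing against $\varphi\in C_c^\infty((0,T)\times\mathbb{R}^d)$ and using the continuous embedding $H^{\vec{\phi},\gamma+2}_p\subset H^{\vec{\phi},\gamma}_p$ of Lemma \ref{H_p^phi,gamma space}$(iii)$ to pass to the limit, one identifies $v=\partial_t u$ in the distributional sense; a diagonal extraction from the defining sequences of the $u_n$ (each in $C^{1,\infty}_p([0,T]\times\mathbb{R}^d)$) then produces a defining sequence for $u$, so $u\in\mathbb{H}^{\vec{\phi},\gamma+2}_{q,p}(T)$ with $\|u_n-u\|\to0$.

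For $(ii)$, the point is that elements of $\mathbb{H}^{\vec{\phi},\gamma+2}_{q,p}(T)$ have a well-defined trace at $t=0$. If $u$ has defining sequence $w_n\in C^{1,\infty}_p([0,T]\times\mathbb{R}^d)$, then from $w_n(t,\cdot)=w_n(0,\cdot)+\int_0^t\partial_t w_n(s,\cdot)\,\mathrm{d}s$ one sees that $w_n$, and in the limit $u$, is a.e. equal to a continuous $H^{\vec{\phi},\gamma}_p$-valued path, and that $u\mapsto u(0,\cdot)\in H^{\vec{\phi},\gamma}_p$ is bounded on $\mathbb{H}^{\vec{\phi},\gamma+2}_{q,p}(T)$. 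Hence, if $u_k\to u$ in $\mathbb{H}^{\vec{\phi},\gamma+2}_{q,p}(T)$ with $u_k\in\mathbb{H}^{\vec{\phi},\gamma+2}_{q,p,0}(T)$, then $u(0,\cdot)=\lim_k u_k(0,\cdot)=0$; conversely, given $u$ with $u(0,\cdot)=0$ and any defining sequence $w_n$, one sets $\tilde w_n(t,x):=w_n(t,x)-\eta(t)\,(\rho_n * w_n(0,\cdot))(x)$ with $\eta\in C_c^\infty([0,\infty))$, $\eta(0)=1$, and $\rho_n$ a spatial mollifier; since $w_n(0,\cdot)\to u(0,\cdot)=0$, the sequence $\tilde w_n\in C^{1,\infty}_p$ vanishes at $t=0$ and still converges to $u$ in the required norms, so $u\in\mathbb{H}^{\vec{\phi},\gamma+2}_{q,p,0}(T)$. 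This shows $\mathbb{H}^{\vec{\phi},\gamma+2}_{q,p,0}(T)$ is closed.

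For $(iii)$, given $u\in\mathbb{H}^{\vec{\phi},\gamma+2}_{q,p,0}(T)$, first approximate by a defining sequence in $C^{1,\infty}_p([0,T]\times\mathbb{R}^d)$ vanishing at $t=0$; then translate in time, $u_n(\cdot-\varepsilon,\cdot)$ extended by $0$ on $\{t<\varepsilon\}$, which is admissible because the path is continuous and vanishes at $t=0$, so the translate is supported in $\{t\ge\varepsilon\}$; then mollify in $(t,x)$ and multiply by spatial cutoffs $\zeta(x/R)$. At each step the error is controlled in $H^{\vec{\phi},\gamma+2}_{q,p}(T)$ and $H^{\vec{\phi},\gamma}_{q,p}(T)$ using Lemma \ref{H_p^phi,gamma space} together with Remark \ref{Hvaluedconti}, which converts the needed $x$-derivative bounds into classical $H^{2n}_p$ bounds; this lands the approximants in $C_c^\infty(\mathbb{R}^{d+1}_+)$. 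For $(iv)$, the isometry assertion is immediate from the Fourier-multiplier definition of the norms and Lemma \ref{H_p^phi,gamma space}$(ii)$, applied in the time-integrated form; the commutation relation holds on $C^{1,\infty}_p$ because $(1-\vec{\phi}\cdot\Delta_{\vec{d}})^{\nu/2}$ is a Fourier multiplier acting only in $x$ and hence commutes with $\partial_t$ there, and it then passes to general $u$ along a defining sequence using parts $(i)$ and $(iv)$'s own isometry on the smooth level.

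The main obstacle is the handling of the $t=0$ boundary in $(ii)$ and $(iii)$: making the trace map well-defined and continuous on $\mathbb{H}^{\vec{\phi},\gamma+2}_{q,p}(T)$, and then verifying that the time-translation/mollification scheme keeps the approximants inside $C_c^\infty(\mathbb{R}^{d+1}_+)$ while converging in both the $\gamma+2$ and the $\gamma$ norms simultaneously. Everything else is routine functional analysis once Lemma \ref{H_p^phi,gamma space} and Remark \ref{Hvaluedconti} are in hand.
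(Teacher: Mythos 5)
Your high-level strategy matches the paper: the paper's proof is a one-line reduction to \cite[Lemma 2.7]{kim2022nonlocal} (with $\partial_t$ in place of the time non-local operator there), and you reconstruct that argument using Lemma \ref{H_p^phi,gamma space} and Remark \ref{Hvaluedconti}. Parts $(i)$ and $(iv)$ are fine, and $(iii)$ is in the right spirit.

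Part $(ii)$, however, has a genuine gap. Your forward direction establishes that $u$ has zero trace, but to conclude $u\in\mathbb{H}^{\vec{\phi},\gamma+2}_{q,p,0}(T)$ you then need the converse — that zero trace yields a defining sequence vanishing at $t=0$ — and your construction of such a sequence fails. First, $\tilde w_n(t,x)=w_n(t,x)-\eta(t)(\rho_n*w_n(0,\cdot))(x)$ does \emph{not} vanish at $t=0$: $\tilde w_n(0,x)=w_n(0,x)-(\rho_n*w_n(0,\cdot))(x)\neq 0$ in general (the mollification is also superfluous, since $w_n(0,\cdot)$ is already smooth with $L_p$ derivatives of every order). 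Second, even with $\rho_n$ dropped, the error $\|\tilde w_n-w_n\|_{H^{\vec{\phi},\gamma+2}_{q,p}(T)}$ is comparable to $\|w_n(0,\cdot)\|_{H^{\vec{\phi},\gamma+2}_p}$, which your trace estimate does \emph{not} control: it only bounds $\|w_n(0,\cdot)\|_{H^{\vec{\phi},\gamma}_p}$. Fortunately, this whole detour is unnecessary. By Definition \ref{defn defining}$(v)$, $\mathbb{H}^{\vec{\phi},\gamma+2}_{q,p,0}(T)$ is \emph{by construction} the closure, in the $\mathbb{H}^{\vec{\phi},\gamma+2}_{q,p}(T)$-norm, of $\{w\in C_{p}^{1,\infty}([0,T]\times\mathbb{R}^d): w(0,\cdot)=0\}$, and closures are closed; equivalently, diagonalize over the defining sequences of the $u_k$ (each chosen with zero initial value) to produce a defining sequence of $u$ vanishing at $t=0$. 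A smaller caution for $(iii)$: the zero-extended translate $u_n(\cdot-\varepsilon,\cdot)\mathbf{1}_{t\geq\varepsilon}$ is generically \emph{not} in $C_{p}^{1,\infty}$ — its $\partial_t$ jumps at $t=\varepsilon$ because $\partial_t u_n(0,\cdot)$ need not vanish — so you must mollify in time before asserting membership, or instead multiply $u_n$ by a smooth time cutoff $\chi_\varepsilon$ that vanishes near $0$, controlling the extra term $\chi'_\varepsilon u_n$ via $\|u_n(t,\cdot)\|_{H^{\vec{\phi},\gamma}_p}\leq C t$ for small $t$ (which uses $u_n(0,\cdot)=0$).
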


Here is the main result of this article.

\begin{theorem} \label{main theorem}
Let $1<p,q<\infty$, $\gamma \in \mathbb{R}$, and $0<T<\infty$.
Suppose that $\vec{\phi}=(\phi_1,\cdots,\phi_{\ell})$ is a vector of Bernstein functions satisfying Assumption \ref{23.03.03.16.04} with drift $\vec{b}_{0}=(b_{01},\dots,b_{0\ell})$ and vector of L\'evy measures $\vec{J}(\mathrm{d}\vec{y})$ defined in \eqref{23.03.04.19.19}.
For vectors of measurable functions
$$
\vec{a}(t,\vec{y}):=(a_1(t,y_1),\cdots,a_{\ell}(t,y_{\ell})),\quad \vec{b}(t):=(b_1(t),\cdots,b_{\ell}(t))
$$
satisfying
\begin{equation}
\label{23.03.06.12.24}
    a_i(t,y_{i}),\in[c_1,c_1^{-1}],\quad b_{i}(t) \in [c_{1}b_{0i},c^{-1}_{1}b_{0i}]  \quad \forall \,i=1,\cdots,\ell, 
\end{equation}
with a positive constant $c_{1}$, define the operator $\mathcal{L}^{\vec{a},\vec{b}}(t)$ as
$$
\mathcal{L}^{\vec{a},\vec{b}}(t)h(\vec{x}):=\vec{b}(t)\cdot\Delta_{\vec{d}}h(\vec{x})+\int_{\mathbb{R}^{d}}(h(\vec{x}+\vec{y})-h(\vec{x})-\nabla_{\vec{x}}h(\vec{x})\cdot \vec{y}\mathbf{1}_{|\vec{y}|\leq1})\vec{a}(t,\vec{y})\cdot\vec{J}(\mathrm{d}\vec{y}).
$$
Then for any $f\in H_{q,p}^{\vec{\phi},\gamma}(T)$, the equation
\begin{equation}\label{mainequation1}
\partial_t  u(t,\vec{x}) = \mathcal{L}^{\vec{a},\vec{b}}(t)u(t,\vec{x}) + f(t,\vec{x}),\quad t>0,\vec{x}\in\mathbb{R}^{\vec{d}}\,; \quad u(0,\vec{x})=0,\quad \vec{x}\in\mathbb{R}^{\vec{d}}
\end{equation}
admits a unique solution $u$ in the class $\mathbb{H}_{q,p,0}^{\vec{\phi},\gamma+2}(T)$, and   we have
\begin{equation*}
\|u\|_{\mathbb{H}_{q,p}^{\vec{\phi},\gamma+2}(T)}\leq C  \|f\|_{H_{q,p}^{\vec{\phi},\gamma}(T)},
\end{equation*}
where $C=C(\vec{b}_{0},d,\delta_0,c_0,c_1,p,q,\gamma,T)$.
Moreover,
\begin{equation}
   \label{mainestimate-111}
\|(\vec{\phi}\cdot\Delta_{\vec{d}}) u\|_{H_{q,p}^{\vec{\phi},\gamma}(T)}+\|\mathcal{L}^{\vec{a},\vec{b}} u\|_{H_{q,p}^{\vec{\phi},\gamma}(T)}\leq C_0 \|f\|_{H_{q,p}^{\vec{\phi},\gamma}(T)},
\end{equation}
where $C_0=C_0(\vec{b}_{0},d,\delta_0,c_0,c_1,p,q,\gamma)$. 
\end{theorem}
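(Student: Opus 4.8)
The plan is to reduce \eqref{mainequation1} to the a priori estimate \eqref{mainestimate-111} with $\gamma=0$ and then to prove that estimate by combining a product representation of the fundamental solution with Calder\'on--Zygmund theory.

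\textbf{Step 1 (reductions).} First I would reduce to $\gamma=0$: since the coefficients $a_i(t,y_i)$ and $b_i(t)$ do not depend on $\vec x$, for each fixed $t$ the operator $\mathcal{L}^{\vec a,\vec b}(t)$ is a Fourier multiplier in $\vec x$ and hence commutes with $(1-\vec\phi\cdot\Delta_{\vec d})^{\gamma/2}$. By Lemma \ref{basicproperty}(iv) the map $u\mapsto(1-\vec\phi\cdot\Delta_{\vec d})^{\gamma/2}u$ is then an isometric isomorphism $\mathbb{H}^{\vec\phi,\gamma+2}_{q,p,0}(T)\to\mathbb{H}^{\vec\phi,2}_{q,p,0}(T)$ that turns \eqref{mainequation1} into the same equation with $f$ replaced by $(1-\vec\phi\cdot\Delta_{\vec d})^{\gamma/2}f\in L_{q,p}(T)$, so we may assume $\gamma=0$. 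Next, once the bound $\|(\vec\phi\cdot\Delta_{\vec d})u\|_{L_{q,p}(T)}\leq C_0\|f\|_{L_{q,p}(T)}$ is known for all admissible $(\vec a,\vec b)$, uniqueness is immediate, and the full estimate follows: $\|u\|_{L_{q,p}(T)}\leq T\|f\|_{L_{q,p}(T)}$ from the contractivity $\|G(t,s)\|_{\mathcal{B}(L_p)}\leq 1$ of the propagator (a transition density), $\|\mathcal{L}^{\vec a,\vec b}u\|_{L_{q,p}(T)}$ from the uniform mapping property $\mathcal{L}^{\vec a,\vec b}(t)\colon H^{\vec\phi,2}_p\to L_p$ combined with Lemma \ref{H_p^phi,gamma space}(iv), and then $\|\partial_t u\|_{L_{q,p}(T)}$ from the equation. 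Existence for the model coefficient $(\vec 1,\vec b_0)$ comes from the explicit formula in Step 2, and existence for general $(\vec a,\vec b)$ from the method of continuity along $\mathcal{L}^{(\tau)}:=(1-\tau)\mathcal{L}^{\vec 1,\vec b_0}+\tau\mathcal{L}^{\vec a,\vec b}$, $\tau\in[0,1]$, because the a priori constant will be uniform in the coefficients.

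\textbf{Step 2 (the a priori estimate via Calder\'on--Zygmund theory).} Because each block $\mathcal{L}_i(t)=b_i(t)\Delta_{x_i}+\int(\cdots)a_i(t,y_i)J_i(\mathrm{d}\vec y)$ acts only on $x_i$ and these commute, the propagator of \eqref{mainequation1} factors as $G(t,s)=\prod_{i=1}^{\ell}G_i(t,s)$, where $G_i(t,s)$ is a spatially-translation-invariant (time-inhomogeneous) SBM-type kernel $p_i^{a_i,b_i}(s,t,x_i-y_i)$; writing $\psi^{\vec a,\vec b}(t,\cdot)$ for the symbol of $-\mathcal{L}^{\vec a,\vec b}(t)$, the coefficient bounds \eqref{23.03.06.12.24} give $\Re[\psi^{\vec a,\vec b}(t,\xi)]\simeq\psi(\xi):=\sum_i\phi_i(|\xi_i|^2)$ and $|\psi^{\vec a,\vec b}(t,\xi)|\leq C(1+\psi(\xi))$ uniformly in $t$. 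Then $u(t,x)=\int_0^t\big(p^{\vec a,\vec b}(s,t,\cdot)\ast f(s,\cdot)\big)(x)\,\mathrm{d}s$ solves \eqref{mainequation1}, and $(\vec\phi\cdot\Delta_{\vec d})u$ is an integral operator against $\mathcal{F}^{-1}\big[-\psi(\xi)\exp(-\int_s^t\psi^{\vec a,\vec b}(r,\xi)\,\mathrm{d}r)\big](x-y)$, which by the product rule is a finite sum of terms consisting of one $\phi_i$-derivative of $p_i^{a_i,b_i}$ times the remaining factors $p_j^{a_j,b_j}$. Assumption \ref{23.03.03.16.04} and \eqref{23.03.06.12.24}, through the scaling estimates behind Theorem \ref{pestimate}, supply pointwise upper bounds for each $p_i^{a_i,b_i}$ and all its $x$-derivatives (uniform in the coefficients), and combining them across the $\ell$ factors shows that this kernel is a Calder\'on--Zygmund kernel with respect to the parabolic quasi-metric on $\mathbb{R}^{d+1}$ adapted to the anisotropy of $(\vec\phi,\vec d)$. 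The $L_2(\mathbb{R}^{d+1})$ bound (Lemma \ref{22estimate}) follows from Plancherel and Young's inequality in time, using that the time-kernel $\psi(\xi)\exp(-c(t-s)\psi(\xi))$ has $L^1_s$-norm bounded uniformly in $\xi$; and Theorem \ref{23.02.22.17.33} provides the endpoint bound $\|(\vec\phi\cdot\Delta_{\vec d})u\|_{BMO(\mathbb{R}^{d+1})}\leq C\|f\|_{L_\infty(\mathbb{R}^{d+1})}$.

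\textbf{Step 3 (from $L_2$ and BMO to $L_q(L_p)$).} Interpolating the $L_2$ bound against the $L_\infty$--BMO bound via the Marcinkiewicz theorem, through the Fefferman--Stein sharp maximal function for the same quasi-metric, gives $\|(\vec\phi\cdot\Delta_{\vec d})u\|_{L_p(\mathbb{R}^{d+1})}\leq C\|f\|_{L_p(\mathbb{R}^{d+1})}$ for $2\leq p<\infty$; a duality argument, using that the formal adjoint of $\mathcal{L}^{\vec a,\vec b}(t)$ is again a time-measurable, space-independent anisotropic operator of the same type, extends it to $1<p<2$, settling the case $q=p$. For general $q$ I would regard $f\mapsto(\vec\phi\cdot\Delta_{\vec d})u$ as an integral operator in the $t$-variable with operator-valued kernel $\mathcal{K}(t,s)\in\mathcal{B}(L_p(\mathbb{R}^d))$, check that $\mathcal{K}$ satisfies the operator-norm H\"ormander condition (an integrated $\mathcal{B}(L_p)$-modulus of continuity off the diagonal) coming from the kernel bounds of Step 2, and invoke the $L_p$-valued Calder\'on--Zygmund theorem together with the $L_p(L_p)$ boundedness just obtained to conclude the $L_q(L_p)$ estimate for all $1<q<\infty$. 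Combined with Step 1 this proves \eqref{mainestimate-111} and the theorem.

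\textbf{Main obstacle.} The heart of the argument, and the step I expect to be hardest, is the verification underlying Theorem \ref{23.02.22.17.33} that the solution kernel is genuinely a Calder\'on--Zygmund kernel for the correct quasi-metric. The operator is not scalable (the $\phi_i$ may be of different orders), its symbol does not decay radially, and each $J_i$ is a singular, lower-dimensional measure, so the natural dilation structure on $\mathbb{R}^{d+1}$ is genuinely anisotropic: one must construct it, prove that it is doubling, and then extract both the size estimate and the smoothness (cancellation) estimate for the kernel by assembling the block-wise derivative bounds of Theorem \ref{pestimate} across all $\ell$ factors of $p^{\vec a,\vec b}=\prod_i p_i^{a_i,b_i}$, each carrying its own scale, while keeping every constant uniform in $(\vec a,\vec b)$. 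A secondary difficulty is obtaining those uniform block estimates with merely time-measurable coefficients, where diagonalization in time is unavailable and one must work directly with the propagator symbol $\exp(-\int_s^t\psi_i^{a_i,b_i}(r,\cdot)\,\mathrm{d}r)$ and the uniform comparison of its exponent with $(t-s)\phi_i(|\cdot|^2)$.
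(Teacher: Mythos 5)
Your overall architecture is the right shape (reduce to $\gamma=0$, prove an a priori $L_q(L_p)$ bound by $L_2$+BMO interpolation and Banach-valued Calder\'on--Zygmund, then conclude), and your Steps~2--3 for the \emph{constant} model coefficients $(\vec a,\vec b)=(\vec 1,\vec b_0)$ track Section~3 of the paper closely. But there is a genuine gap in how you pass to general time-measurable coefficients.

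You propose to prove the a priori estimate \emph{directly} for all admissible $(\vec a,\vec b)$, and then invoke the method of continuity. For that you need, in Step~2, pointwise heat-kernel bounds for the propagator $G(t,s)=\prod_i G_i(t,s)$ of the time-inhomogeneous operator $\mathcal{L}^{\vec a,\vec b}(t)$, uniform in the coefficients, together with the corresponding BMO endpoint. But Theorem~\ref{pestimate} only gives such bounds for the \emph{constant}-coefficient SBM densities $p_i(t,x_i)$; its proof leans on the subordination formula \eqref{eqn 7.20.2} and on the fact that $\phi_i(\Delta_{x_i})^{\nu k}p_i$ is again realized by a Bernstein function, neither of which survives once $a_i$ depends on $t$ and on $y_i$. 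Likewise Theorem~\ref{23.02.22.17.33} is stated and proved only for the operator $\mathcal{G}$ built from the model kernel $p(t,\vec x)=\prod_i p_i(t,x_i)$, not for a variable-coefficient propagator. You flag this as a ``secondary difficulty,'' but it is actually the load-bearing step of your plan, and nothing in the paper (or in your sketch) produces those variable-coefficient kernel bounds, so the method of continuity has no uniform a priori estimate to feed on.

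The paper avoids this entirely. After proving the a priori estimate for the \emph{model} operator (your Step~2, which is correct in that setting), it handles general coefficients not by a kernel estimate or continuity argument, but by a probabilistic splitting. Using $a_i(t,y_i)\geq c_1$ and $b_i(t)\geq c_1 b_{0i}$, it decomposes $\vec a\cdot\vec J=(\vec a-\vec c_1)\cdot\vec J+c_1\vec 1\cdot\vec J$ and $\diag(\vec b)=\diag(\vec b-c_1\vec b_0)+c_1\diag(\vec b_0)$, realizes the nonnegative remainders as an additive process $\vec Y$ independent of an IASBM $\vec X$ for $c_1\vec\phi$, writes $v_n(t,\vec x,\omega)=\int_0^t\mathbb{E}_{\omega'}[f_n(s,\vec x-\vec Y_s(\omega)+\vec X_{t-s}(\omega'))]\mathrm{d}s$, observes via Lemma~\ref{probrep} that for each fixed $\omega$ this solves the \emph{constant-coefficient} model equation with forcing $f_n(t,\vec x-\vec Y_t(\omega))$, applies the Case~1 estimate $\|(\vec\phi\cdot\Delta_{\vec d})v_n(\omega)\|_{L_{q,p}(T)}\leq C\|f_n\|_{L_{q,p}(T)}$ (the translation-invariant $L_{q,p}(T)$ norm kills the $\omega$-dependence), and then sets $u_n(t,\vec x)=\mathbb{E}_\omega[v_n(t,\vec x+\vec Y_t(\omega),\omega)]$; Minkowski's inequality transfers the bound, and Lemma~\ref{23.10.25.16.59} gives $\|\mathcal{L}^{\vec a,\vec b}u_n\|_{L_{q,p}(T)}$. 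This construction yields existence and the estimate simultaneously, with no kernel information about the variable-coefficient propagator ever required. Your plan needs either this trick or an independent proof of variable-coefficient kernel bounds; as written, it is incomplete.

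One minor additional remark: your uniqueness argument (``immediate from the a priori estimate'') implicitly assumes that the estimate is known for the given operator, but the paper instead proves uniqueness directly by showing that any $u\in\mathbb{H}^{\vec\phi,2}_{q,p,0}(T)$ with $\partial_t u=\mathcal{L}^{\vec a,\vec b}u$ has $f_n:=\partial_t u_n-\mathcal{L}^{\vec a,\vec b}u_n\to 0$ along a defining sequence and invoking the $L_1$-contractivity of the probabilistic representation; both routes are fine once existence is settled, but yours only closes once the general-coefficient a priori estimate is actually established.
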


\begin{remark}
Note that \eqref{23.03.06.12.24} means that if $b_{0i}=0$, then we assume that $b_{i}(t) \equiv 0$.
\end{remark}

\begin{remark}
\label{23.03.07.14.34}

In Theorem \ref{main theorem}, we establish the existence, uniqueness and maximal regularity of the solution $u$ to the equation \eqref{mainequation1} in the function space $\mathbb{H}^{\vec{\phi},2}_{q,p,0}(T)$. In this remark, we show that a lower regularity of the solution can be also established through the use of the Marcinkiewicz multiplier theorem (\textit{e.g.} \cite[Theorem 6.2.4]{grafakos2014classical}). 

($i$) Let $u\in H^{\vec{\phi},2}_{p}$ and
$$
\phi_{1}(\Delta_{x_{1}})^{\alpha_{1}} \dots \phi_{\ell}(\Delta_{x_{\ell}})^{\alpha_{\ell}} (1-\vec{\phi}\cdot\Delta_{\vec{d}})^{-1}u(\vec{x}):=\mathcal{F}_d^{-1}[m\mathcal{F}_d[u]](\vec{x}),
$$
where $\alpha_{1},\cdots,\alpha_{\ell}$ are nonnegative constants satisfying $\alpha_{1}+\dots+\alpha_{\ell}\leq1$, and
\begin{align}\label{23.03.08.23.33}
m(\vec{\xi}) := \frac{(\phi_{1}(|\xi_{1}|^{2}))^{\alpha_{1}} \times \dots \times (\phi_{\ell}(|\xi_{\ell}|^{2}))^{\alpha_{\ell}}}{1+\phi_{1}(|\xi_{1}|^{2}) + \dots +\phi_{\ell}(|\xi_{\ell}|^{2})} := \frac{A(\vec{\xi})}{B(\vec{\xi})}:=A(\vec{\xi})H(\vec{\xi}).
\end{align}
Using the fact that $(\phi_{i})^{\alpha_{i}}$ is also a Bernstein function (see \textit{e.g.} \cite[Corollary 3.8 (iii)]{schbern}) and inequality
\begin{align}
	\label{eqn 07.19.14.35}
\lambda^{n}|\phi^{(n)}(\lambda)| \leq C(n) \phi(\lambda),\quad \forall n\in\mathbb{N},\lambda>0,
\end{align}
we can derive the following inequality for $1\leq k\leq d$, where $j_1,\cdots,j_k\in\{1,\cdots,d\}$:
\begin{align}
\label{23.03.08.14.35}
|D_{\xi^{j_1}}\cdots D_{\xi^{j_k}}A(\vec{\xi})|
&\leq C(\alpha_{j_1},\cdots,\alpha_{j_k},d,k) A(\vec{\xi})\prod_{i=1}^{k}|\xi^{j_i}|^{-1}.
\end{align}
Here, the inequality \eqref{eqn 07.19.14.35} is derived from \eqref{23.03.08.12.52} and the inequality $t^{n}\mathrm{e}^{-t}\leq N(n)(1-\mathrm{e}^{-t})$. Similarly, we have
\begin{align}
\label{23.03.08.14.36}
|D_{\xi^{j_1}}\cdots D_{\xi^{j_k}}H(\vec{\xi})|\leq C(d,k) H(\vec{\xi})\prod_{i=1}^{k}|\xi^{j_i}|^{-1}.
\end{align}
Then, applying the product rule of differentiation to \eqref{23.03.08.23.33}, and then using Young's inequality, we obtain the following inequality for any $1\leq k\leq\ell$:
\begin{equation}\label{eqn 04.10.15:05}
|D_{\xi^{j_1}}\cdots D_{\xi^{j_k}}m(\vec{\xi})| \leq C(\alpha_{j_1},\cdots,\alpha_{j_k},d,k) \prod_{i=1}^{k}|\xi^{j_i}|^{-1}.
\end{equation}
Thus, according to \cite[Corollary 6.2.5]{grafakos2014classical}, we have
\begin{align*}
&\|\phi_{1}(\Delta_{x_{1}})^{\alpha_{1}} \dots \phi_{\ell}(\Delta_{x_{\ell}})^{\alpha_{\ell}}u\|_{L_{p}}\\
&= \|(1-\vec{\phi}\cdot\Delta_{\vec{d}})\phi_{1}(\Delta_{x_{1}})^{\alpha_{1}} \dots \phi_{\ell}(\Delta_{x_{\ell}})^{\alpha_{\ell}} (1-\vec{\phi}\cdot\Delta_{\vec{d}})^{-1}u\|_{L_{p}} \nonumber
\\
& \leq C \|(1-\vec{\phi}\cdot\Delta_{\vec{d}})u\|_{L_{p}} = \|u\|_{H^{\vec{\phi},2}_{p}} <\infty.
\end{align*}
Also, if $\alpha_{i}=1$ for some $i=1,\dots,\ell$, then by replacing $B(\vec{\xi})$ by $\phi_{1}(|\xi_{1}|^{2}) +\dots +\phi_{\ell}(|\xi_{\ell}|^{2})$, and following the above argument, we can check that
\begin{align}\label{23.03.08.23.11}
\|\phi_{i}(\Delta_{x_{i}})u\|_{L_{p}} \leq C \|\vec{\phi}\cdot \Delta_{\vec{d}}u\|_{L_{p}} \quad \forall \, i=1,\dots,\ell.
\end{align}

($ii$) Assume that $\phi_{n}$ ($n\in \{1,\dots,\ell\}$) satisfies
\begin{equation}\label{eqn 04.10.14:47}
\sup_{r>1} \frac{\phi_{n}(r)}{\phi_{i}(r)} \leq C(i,n) \quad \forall\, i=1,\dots,\ell,
\end{equation}
and define
\begin{equation*}
m_{n}(\vec{\xi}) := \frac{\phi_{n}(|\vec{\xi}|^{2})}{1+\phi_{1}(|\xi_{1}|^{2}) +\dots + \phi_{\ell}(|\xi_{\ell}|^{2})}.
\end{equation*}
 Then like \eqref{23.03.08.14.35}, we see that $A_{n}(\vec{\xi}) := \phi_{n}(|\vec{\xi}|^{2})$
satisfies 
$$
|D_{\xi^{j_1}}\cdots D_{\xi^{j_k}}A_{n}(\vec{\xi})|
\leq C A_{n}(\vec{\xi})\prod_{i=1}^{k}|\xi^{j_i}|^{-1},
$$
where $j_1,\cdots,j_k\in\{1,\cdots,d\}$. Combining this with \eqref{23.03.08.14.36}, we see that $m_{n}$ satisfies
\begin{equation*}
|D_{\xi^{j_1}}\cdots D_{\xi^{j_k}}m_{n}(\vec{\xi})| \leq C(d,k) m_{n}(\vec{\xi}) \prod_{i=1}^{k}|\xi^{j_i}|^{-1}.
\end{equation*}
It is obvious that
\begin{equation}\label{eqn 04.10.15:04}
|D_{\xi^{j_1}}\cdots D_{\xi^{j_k}}m_{n}(\vec{\xi})| \leq C(d,k)  \prod_{i=1}^{k}|\xi^{j_i}|^{-1} \quad \forall \, |\vec{\xi}|\leq 2.
\end{equation}
Now, suppose that $|\vec{\xi}|\geq2$ and let $J$ be the subset of $\{1,\dots,\ell\}$ consisting of entries $j$ such that
$1\leq |\vec{\xi}|/|\xi_{j}| \leq 2$. Then by \eqref{phiratio} and \eqref{eqn 04.10.14:47}, we have
\begin{align*}
m_{n}(\vec{\xi}) &\leq \frac{4}{|J|}\sum_{j\in J} \frac{\phi_{n}(|\xi_{j}|^{2})}{1+\phi_{1}(|\xi_{1}|^{2}) + \dots + \phi_{\ell}(|\xi_{\ell}|^{2})}\leq \frac{4}{|J|}\sum_{j\in J} \frac{\phi_{n}(|\xi_{j}|^{2})}{1+ \phi_{j}(|\xi_{j}|^{2})} \leq C.
\end{align*}
Combining this with \eqref{eqn 04.10.15:04}, we check that $m_{n}$ also satisfies \eqref{eqn 04.10.15:05}. Therefore, like \eqref{23.03.08.23.11} we have
\begin{align*}
\|\phi_{n}(\Delta) u\|_{L_{p}}
&= \|(1-\vec{\phi}\cdot\Delta_{\vec{d}})\phi_{n}(\Delta)(1-\vec{\phi}\cdot\Delta_{\vec{d}})^{-1}u\|_{L_{p}} \nonumber
\\
& \leq C \|(1-\vec{\phi}\cdot\Delta_{\vec{d}})u\|_{L_{p}} = \|u\|_{H^{\vec{\phi},2}_{p}} <\infty.
\end{align*}
This shows that $\phi_{n}(\Delta)u\in L_{p}$. For example, if we let $\phi_{i}(r)=r^{\delta_{i}}$ and $\delta_{n}=\min\{\delta_{1},\dots,\delta_{\ell}\}$, then $\Delta^{\delta_{n}/2}u \in L_{p}$. Moreover, if $\phi_{i}=\phi$ for all $i=1,\dots,\ell$, then
\begin{align*}
\frac{\phi(|\xi|^{2})}{\phi(|\xi_{1}|^{2}) + \dots + \phi(|\xi_{\ell}|^{2})} 
&\leq \frac{4}{|J|}\sum_{j\in J} \frac{\phi(|\xi_{j}|^{2})}{ \phi(|\xi_{j}|^{2})} \leq C \quad \forall \, \xi \in \mathbb{R}^{d},
\end{align*}
where $J$ is taken as above. Using this and applying the similar argument,
we further check that $\phi(\Delta)u \in L_{p}$, and 
\begin{equation}\label{eqn 04.19.17:54}
\|\phi(\Delta)u \|_{L_{p}} \leq C \|(\vec{\phi}\cdot \Delta_{\vec{d}})u\|_{L_{p}}.
\end{equation}
\end{remark}
\begin{remark}
The inequality \eqref{23.03.08.23.11} not only provides regularity information for $u$ with respect to each coordinate, but also yields an equivalent characterization of the $H_{p}^{\vec{\phi},2}$ space as follows:
\begin{equation}\label{eqn 04.17.13:45}
\|(\vec{\phi}\cdot \Delta_{\vec{d}})u\|_{L_{p}}\simeq \sum_{i=1}^{\ell}\|\phi_i(\Delta_{x_i})u\|_{L_{p}}.
\end{equation}
Combining this characterization with Lemma \ref{H_p^phi,gamma space}-$(iv)$, we further obtain
$$
\|u\|_{H_{p}^{\vec{\phi},2}}\simeq\|u\|_{L_{p}}+\|(\vec{\phi}\cdot\Delta_{\vec{d}})u\|_{L_{p}}\simeq \|u\|_{L_{p}}+\sum_{i=1}^{\ell}\|\phi_i(\Delta_{x_i})u\|_{L_{p}}.
$$
This equivalence allows us to understand the $H_{p}^{\vec{\phi},2}$ space in terms of both individual coordinate regularities and simultaneous regularities across all coordinates. In particular, if $\phi_1(\lambda)=\cdots=\phi_{\ell}(\lambda)=\lambda^{\alpha}$, then $H_{p}^{\vec{\phi},2}$ becomes the classical Bessel potential space $H_{p}^{2\alpha}$.
\end{remark}

The proof of our main result, Theorem \ref{main theorem}, crucially relies on two statements: the probabilistic representation of the solution(Lemma \ref{probrep}) and the $L_q(L_p)$-boundedness of the solution(Theorem \ref{23.03.07.13.46}). The probabilistic representation of the solution can be easily obtained from \cite[Theorem 2.1.5]{choi_thesis}. We provide the proof of $L_q(L_p)$-boundedness of the solution in Section \ref{23.03.08.15.15}, which is essential to the proof of Theorem \ref{main theorem} in this article.
\begin{lemma}(\cite[Theorem 2.1.5]{choi_thesis}) \label{probrep}
Let $X$ be an additive process (see \textit{e.g.} \cite[Definition 1.6.]{sato1999}) with the characteristic exponent
$$
\mathbb{E}[\mathrm{e}^{i\xi\cdot (X_t-X_s)}]=\exp\left(\int_s^t\int_{\mathbb{R}^d}(\mathrm{e}^{i\xi\cdot y}-1-i\xi\cdot y1_{|y|\leq 1})\nu_r(\mathrm{d}y)\mathrm{d}r\right).
$$
Then for any $f\in C_p^{\infty}([0,T]\times\mathbb{R}^d)$,
\begin{equation*}
\begin{gathered}
u(t,x):=\int_{0}^{t} \mathbb{E}[f(s,x+X_t-X_s)] \mathrm{d}s\in C_p^{1,\infty}([0,T]\times\mathbb{R}^d)
\end{gathered}
\end{equation*}
is a unique classical solution of the equation 
$$
\partial_tu(t,x)=\mathcal{L}_{X}(t)u(t,x)+f(t,x),\quad \forall (t,x)\in(0,T)\times\mathbb{R}^{d},
$$
where
$$
\mathcal{L}_X(t)g(x):=\lim_{h\downarrow0}\frac{\mathbb{E}[g(x+X_{t+h}-X_t)]-g(x)}{h}.
$$
\end{lemma}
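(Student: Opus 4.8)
The plan is to verify directly, via Fourier analysis and the convolution structure of the increments of the additive process $X$, that the Duhamel-type formula defining $u$ solves the equation, and then to derive uniqueness from a scalar ODE on the Fourier side.

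First I would introduce the two-parameter transition family $T_{s,t}g(x):=\mathbb{E}[g(x+X_t-X_s)]$ for $0\le s\le t\le T$. Since $X$ has independent increments with the prescribed characteristic function, $T_{s,t}g=\mu_{s,t}*g$, where $\mu_{s,t}$ is the law of $X_t-X_s$, so on the Fourier side
$$
\mathcal{F}[T_{s,t}g](\xi)=\exp\Big(\textstyle\int_s^t\psi_r(\xi)\,\mathrm{d}r\Big)\mathcal{F}[g](\xi),\qquad \psi_r(\xi):=\int_{\mathbb{R}^d}\big(\mathrm{e}^{i\xi\cdot y}-1-i\xi\cdot y\mathbf{1}_{|y|\le1}\big)\,\nu_r(\mathrm{d}y).
$$
Because $\mu_{s,t}$ is a probability measure, $\|T_{s,t}g\|_{L_p}\le\|g\|_{L_p}$, $T_{s,t}$ commutes with every $D_x^\sigma$, and $(s,t)\mapsto T_{s,t}g$ is strongly continuous in $L_p$; moreover the symbols obey $|\psi_r(\xi)|\le C(1+|\xi|^2)$ with $C$ locally bounded in $r$. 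Comparing Fourier symbols in the defining limit of $\mathcal{L}_X(t)$ shows $\mathcal{L}_X(t)g=\mathcal{F}^{-1}[\psi_t\,\mathcal{F}[g]]$ on $\mathcal{S}$ and on $C^\infty_p([0,T]\times\mathbb{R}^d)$, and differentiating the explicit symbol $\exp(\int_s^t\psi_r\,\mathrm{d}r)$ in $t$ gives
$$
\partial_t T_{s,t}g=\mathcal{L}_X(t)\,T_{s,t}g\qquad\text{in }C([s,T];L_p)
$$
for $g\in C^\infty_p([0,T]\times\mathbb{R}^d)$, at every $t$ at which $r\mapsto\psi_r$ is continuous (which covers the cases of interest).

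Next, writing $u(t,x)=\int_0^t T_{s,t}f(s,\cdot)(x)\,\mathrm{d}s$ and applying the Leibniz rule, the boundary term is $T_{t,t}f(t,\cdot)(x)=f(t,x)$, while the interior term is $\int_0^t\partial_tT_{s,t}f(s,\cdot)(x)\,\mathrm{d}s=\mathcal{L}_X(t)\int_0^tT_{s,t}f(s,\cdot)(x)\,\mathrm{d}s=\mathcal{L}_X(t)u(t,x)$; hence $\partial_tu=\mathcal{L}_X(t)u+f$, and $u(0,\cdot)=0$ is immediate from the formula. For the regularity class I would apply the $L_p$-contractivity and strong $L_p$-continuity of $T_{s,t}$, together with $D_x^m f\in C([0,T];L_p)$, to each $D_x^m u$, and then to $\partial_t D_x^m u=D_x^m f+\mathcal{L}_X(t)D_x^m u$, to conclude $u\in C^{1,\infty}_p([0,T]\times\mathbb{R}^d)$. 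For uniqueness, given two solutions $u_1,u_2\in C^{1,\infty}_p$ with $u_i(0,\cdot)=0$, the difference $w:=u_1-u_2$ satisfies $\partial_tw=\mathcal{L}_X(t)w$, $w(0,\cdot)=0$; taking the tempered-distributional Fourier transform in $x$, for each $\xi$ the scalar map $t\mapsto\widehat w(t,\xi)$ solves $\partial_t\widehat w(t,\xi)=\psi_t(\xi)\widehat w(t,\xi)$ with $\widehat w(0,\xi)=0$, and since $\Re\psi_t(\xi)\le0$ this forces $\widehat w\equiv0$, hence $w\equiv0$.

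The main obstacle is making the $t$-differentiation rigorous at the level of $C([0,T];L_p)$ — that is, justifying $\partial_tT_{s,t}g=\mathcal{L}_X(t)T_{s,t}g$ and the exchange of $\mathcal{L}_X(t)$ with $\int_0^t(\cdot)\,\mathrm{d}s$ — when $r\mapsto\nu_r$ need only be measurable. I would resolve this by first proving the identities for $g\in\mathcal{S}$, controlling everything by the uniform bound $|\psi_r(\xi)|\le C(1+|\xi|^2)$ and dominated convergence on the Fourier side, and then passing to general $g\in C^\infty_p$ and to general $f\in C^\infty_p$ by density together with the contractivity estimates.
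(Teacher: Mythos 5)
The paper does not reproduce a proof of this lemma; it is quoted from \cite[Theorem 2.1.5]{choi_thesis}, so there is no in-paper argument to compare with. Your strategy for the existence and regularity parts --- the two-parameter family $T_{s,t}g=\mu_{s,t}*g$, the symbol identity $\mathcal{F}[T_{s,t}g]=\exp(\int_s^t\psi_r\,\mathrm{d}r)\mathcal{F}[g]$, the contraction $\|T_{s,t}\|_{L_p\to L_p}\le1$, the bound $|\psi_r(\xi)|\le C(1+|\xi|^2)$, the elementary estimate $|e^z-1|\le|z|$ for $\Re z\le0$, and Duhamel --- is the natural one and works, with the caveat you already flag: for merely measurable $r\mapsto\nu_r$ the limit defining $\mathcal{L}_X(t)$ (and hence the identity $\partial_tT_{s,t}g=\mathcal{L}_X(t)T_{s,t}g$) is guaranteed only at Lebesgue points of $r\mapsto\psi_r(\xi)$, so ``classical'' must be read for a.e.\ $t$ in the stated generality.

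The real gap is in the uniqueness step. You write that ``for each $\xi$ the scalar map $t\mapsto\widehat w(t,\xi)$ solves $\partial_t\widehat w=\psi_t\widehat w$,'' but for $w(t,\cdot)\in L_p$ with $p>2$ the Fourier transform is only a tempered distribution; membership in $C^{1,\infty}_p$ gives no $L_1$ or $L_2$ integrability (higher Sobolev regularity raises, not lowers, the integrability exponent), so there are no pointwise values $\widehat w(t,\xi)$ and your scalar ODE is not meaningful as written. The cleanest repair avoids Fourier transforming $w$ at all: fix $t_0\in(0,T]$ and set $v(s,\cdot):=T_{s,t_0}w(s,\cdot)$ for $s\in[0,t_0]$. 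Since $T_{s,t_0}$ and $\mathcal{L}_X(s)$ are Fourier multipliers they commute, and the product rule gives $\partial_s v=-T_{s,t_0}\mathcal{L}_X(s)w(s,\cdot)+T_{s,t_0}\mathcal{L}_X(s)w(s,\cdot)=0$ in $L_p$ for a.e.\ $s$, while $v\in C([0,t_0];L_p)$; hence $w(t_0,\cdot)=v(t_0)=v(0)=T_{0,t_0}w(0,\cdot)=0$. If you insist on a Fourier-side argument, test $\widehat w(s,\cdot)$ against the integrating-factor deformation $\exp(\int_s^{t_0}\psi_r(\xi)\,\mathrm{d}r)\varphi(\xi)$ of a fixed $\varphi\in C_c^\infty(\mathbb{R}^d\setminus\{0\})$ to conclude $\widehat w(t_0,\cdot)$ is supported at the origin (so $w(t_0,\cdot)$ is a polynomial in $L_p$, hence zero); but note this requires $\xi\mapsto\psi_r(\xi)$ to have enough smoothness to keep the test function in $\mathcal S$, which holds in the paper's subordinate-Brownian setting but not for arbitrary $\nu_r$ without moment conditions.
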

\begin{theorem}
\label{23.03.07.13.46}
Let $\vec{X}:=(X^1,\cdots,X^{\ell})$ be an IASBM corresponding to $\vec{\phi}$. Suppose that Assumption \ref{23.03.03.16.04} holds. Then for any $p,q\in(1,\infty)$ and $f\in C_{c}^{\infty}(\mathbb{R}^{d+1})$,
\begin{align} \label{qpestimate}
\|\mathcal{G} f\|_{L_q(\mathbb{R};L_p(\mathbb{R}^{d}))} \leq C(\vec{b}_0,d,c_{0},\delta_{0},p,q) \|f\|_{L_q(\mathbb{R};L_p(\mathbb{R}^{d}))},
\end{align}
where
\begin{equation*}
\mathcal{G}:f\mapsto\mathcal{G} f=\mathcal{G} f(t,\vec{x}):=\int_{-\infty}^t\mathbb{E}[(\vec{\phi}\cdot\Delta_{\vec{d}})f(s,\vec{x}+\vec{X}_{t-s})]\mathbf{1}_{s>0}\mathrm{d}s.
\end{equation*}
\end{theorem}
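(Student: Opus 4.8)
The plan is to realize $\mathcal{G}$ as a Calder\'on-Zygmund operator with a space--time convolution kernel and to run the anisotropic parabolic Calder\'on-Zygmund scheme in the order advertised in the introduction: an $L_2$ bound, an $L_\infty\to BMO$ bound from kernel estimates, interpolation to $L_p(\mathbb{R}^{d+1})$, and finally a vector-valued step yielding the mixed norm. First I would record the convolution structure: since the blocks $X^1,\dots,X^\ell$ of the IASBM are independent, the transition density of $\vec X_r$ factors as $p(r,\vec x)=\prod_{i=1}^\ell p_i(r,x_i)$, and since $\vec\phi\cdot\Delta_{\vec d}$ commutes with spatial translations, $\mathbb{E}[(\vec\phi\cdot\Delta_{\vec d})f(s,\vec x+\vec X_{t-s})]=\bigl(f(s,\cdot)*(\vec\phi\cdot\Delta_{\vec d})p(t-s,\cdot)\bigr)(\vec x)$; hence $\mathcal{G} f=(f\mathbf{1}_{s>0})*K$ as a space--time convolution with $K(r,\vec x):=\mathbf{1}_{r>0}(\vec\phi\cdot\Delta_{\vec d})p(r,\vec x)=\mathbf{1}_{r>0}\partial_r p(r,\vec x)$, the last identity being Kolmogorov's equation for the semigroup of $\vec X$. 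By \eqref{23.04.18.11.25} the space--time Fourier transform of $K$ equals $-\psi(\vec\xi)/(i\tau+\psi(\vec\xi))$ with $\psi(\vec\xi)=\sum_{i=1}^\ell\phi_i(|\xi_i|^2)\ge0$, which has modulus at most $1$, so Plancherel gives the $L_2(\mathbb{R}^{d+1})$ bound for $\mathcal{G}$ (this is Lemma~\ref{22estimate}).

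Next I would set up the anisotropic parabolic geometry: the cylinders $Q_r(t_0,\vec x_0):=(t_0-r,t_0)\times\prod_{i=1}^\ell B^{d_i}_{\rho_i(r)}(x_{0,i})$, where $\rho_i(r):=(\phi_i^{-1}(1/r))^{-1/2}$ is the natural spatial scale of $X^i$ at time $r$; inverting \eqref{phiratio} shows that $\phi_i^{-1}$, hence $\rho_i$, obeys power-type comparisons under dilation of $r$, so the $Q_r$ are doubling and $\mathbb{R}^{d+1}$ with them and Lebesgue measure is a space of homogeneous type. Writing $\partial_r p=\sum_{i=1}^\ell(\partial_r p_i)\prod_{j\ne i}p_j$ and feeding the pointwise bounds on arbitrary-order derivatives of the $p_i$ from Theorem~\ref{pestimate} through Leibniz's rule, I would verify the H\"ormander cancellation condition for $K$ relative to these cylinders, namely $\int_{(\mathbb{R}^{d+1})\setminus Q_{2r}(0,\vec 0)}|K(t-s,\vec x-\vec y)-K(t,\vec x)|\,dt\,d\vec x\le C$ uniformly over $(s,\vec y)\in Q_r(0,\vec0)$, plus the symmetric estimate. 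Together with the $L_2$ bound this produces $\|\mathcal{G} f\|_{BMO(\mathbb{R}^{d+1})}\le C\|f\|_{L_\infty(\mathbb{R}^{d+1})}$ (Theorem~\ref{23.02.22.17.33}); interpolating this $L_\infty\to BMO$ bound with the $L_2\to L_2$ bound (Marcinkiewicz/Stampacchia) and using duality to cover $1<p\le 2$ then gives $\|\mathcal{G} f\|_{L_p(\mathbb{R}^{d+1})}\le C\|f\|_{L_p(\mathbb{R}^{d+1})}$ for all $p\in(1,\infty)$.

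For the final mixed-norm step I would view $K$ as an $\mathcal{B}(L_p(\mathbb{R}^d))$-valued kernel of the time variable $r$, acting by $h\mapsto h*\partial_r p(r,\cdot)$. Integrating the derivative estimates of Theorem~\ref{pestimate} in $\vec x$ gives $\|\partial_r^m p(r,\cdot)\|_{L_1(\mathbb{R}^d)}\le C r^{-m}$, so by Young's inequality $\|K(r)\|_{\mathcal{B}(L_p)}\le C r^{-1}$ and $\|\partial_r K(r)\|_{\mathcal{B}(L_p)}\le C r^{-2}$; since $K$ is supported in $\{r>0\}$ this yields the operator-valued H\"ormander condition $\int_{|r|\ge 2|h|}\|K(r-h)-K(r)\|_{\mathcal{B}(L_p)}\,dr\le C$ for every $h$. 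Combining it with the $L_p(\mathbb{R};L_p(\mathbb{R}^d))$ boundedness from the previous step (the case $q=p$), the $L_p$-valued Calder\'on-Zygmund theorem upgrades $\mathcal{G}$ to a bounded operator on $L_q(\mathbb{R};L_p(\mathbb{R}^d))$ for all $q\in(1,\infty)$, which is \eqref{qpestimate}; the cutoff $\mathbf{1}_{s>0}$ only restricts the $s$-domain of the convolution and, as $f\in C_c^\infty$, is harmless.

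The hard part will be the H\"ormander kernel estimate of the second step, carried out uniformly in the genuinely anisotropic regime. Because $K=\mathbf{1}_{r>0}\partial_r\prod_{i=1}^\ell p_i$, each spatial block is controlled by its own time-dependent scale $\rho_i(r)$, so the cancellation integral must be decomposed into a time contribution and one contribution per block, derivatives distributed across the $\ell$ factors, and the resulting mixed bounds matched against the anisotropic shape of the $Q_r$. Every such estimate has to be quantified purely through the lower-scaling condition \eqref{e:H} and its concavity companion \eqref{phiratio}; in particular $\phi_i^{-1}$ need not be comparable to any power of $\lambda$ unless $\phi_i$ is, so all of the scaling bookkeeping must be phrased directly in terms of $\phi_i$ and $\phi_i^{-1}$ rather than exponents.
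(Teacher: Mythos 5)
Your proposal is essentially the same as the paper's proof: the $L_2$ bound via Plancherel, a BMO bound from Theorem~\ref{pestimate}'s kernel estimates on anisotropic cylinders $Q_b$, Fefferman--Stein/Marcinkiewicz plus duality for $L_p(\mathbb{R}^{d+1})$, and the $L_p$-valued Calder\'on--Zygmund theorem of \cite{krylov2001caideron} for the mixed norm. The only cosmetic difference is that the paper splits $\mathcal{G}=\sum_i\mathcal{G}_i$ (each with kernel $q_i\prod_{j\neq i}p_j$) and proves the BMO bound for each $\mathcal{G}_i$ by an explicit near/far decomposition of the data $f$ in time and in each spatial block, whereas you state the same content as a H\"ormander cancellation condition for the combined kernel $K=\mathbf{1}_{r>0}\partial_r p$; unwinding that condition leads to exactly the paper's $f_1,\dots,f_4$ computations.
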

\begin{remark}\label{rmk 02.14.16:17}
By \eqref{23.04.18.11.25},
\begin{align*}
    \mathcal{F}_{d+1}[\mathcal{G} f](\tau,\vec{\xi})&=\frac{\phi_1(|\xi_1|^2)+\cdots+\phi_{\ell}(|\xi_{\ell}|^2)}{i\tau+\phi_1(|\xi_1|^2)+\cdots+\phi_{\ell}(|\xi_{\ell}|^2)}\mathcal{F}_{d+1}[f\mathbf{1}_{\mathbb{R}_+}](\tau,\vec{\xi})\\
    &=:m(\tau,\vec{\xi})\mathcal{F}_{d+1}[f\mathbf{1}_{\mathbb{R}_+}](\tau,\vec{\xi}).
\end{align*}
We would like to make a remark regarding the $L_q(L_p)$-boundedness of $\mathcal{G}$ when $p=q\neq 2$. For the case $\ell=1$, and $p=q$, we refer to \cite[Lemma 6.6]{kim2013parabolic}, which shows the $L_p$-boundedness of $\mathcal{G}$ based on the Mikhlin multiplier theorem.
It is worth noting that when $\ell>1$, then the result is not a simple consequence of results on Fourier multipliers, such as Mikhlin's and Marcinkiewicz's theorem.

To illustrate this point, let us consider the simple case where
$$
\phi_{1}(r) =r^{\delta_{1}},\quad \phi_{2}(r)=r^{\delta_{2}},\quad d_{1}=d_{2}=1,\quad 0<\delta_1,\delta_2<1.
$$
Then we claim that the $3$-dimensional Fourier multiplier
\begin{align*}
m(\tau,\xi_1,\xi_2) := \frac{|\xi_{1}|^{2\delta_1} + |\xi_{2}|^{2\delta_2}}{i\tau + |\xi_{1}|^{2\delta_1} + |\xi_{2}|^{2\delta_2}}
\end{align*}
does not satisfy assumptions of both Mikhlin's and Marcinkiewicz's multiplier theorem simultaneously.
For these, taking the derivative with respect to $\xi_1$,
\begin{align*}
\frac{\partial}{\partial{\xi_{1}}}m(\tau,\xi_1,\xi_2) =\frac{2\delta_1 |\xi_1|^{2\delta_1-2} \xi_{1} (2\tau^2(|\xi_1|^{2\delta_1}+|\xi_2|^{2\delta_2})+i\tau((|\xi_1|^{2\delta_1}+|\xi_2|^{2\delta_2})^2-\tau^2))}{|\tau|^{2} + (|\xi_{1}|^{2\delta_1}+|\xi_{2}|^{2\delta_2})^{2}}.
\end{align*}
Using this, we see that
\begin{align*}
\left|\Re\left[ \frac{\partial}{\partial{\xi_{1}}}m(\tau,\xi_1,\xi_2) \right] \right|  
&\geq\frac{4\delta_1|\tau|^2|\xi_1|^{2\delta_1-1}(|\xi_1|^{2\delta_1}+|\xi_2|^{2\delta_2})}{(|\tau|+|\xi_{1}|^{2\delta_1}+|\xi_{2}|^{2\delta_2})^{2}}.
\end{align*}
If
$$
1<R<|(\tau,\xi_1,\xi_2)|=\left(|\tau|^2+|\xi_1|^2+|\xi_2|^2\right)^{1/2}<2R,
$$
then
$$
\left|\Re\left[ \frac{\partial}{\partial{\xi_{1}}}m(\tau,\xi_1,\xi_2) \right] \right|\geq C(\delta_1) R^{-2}|\tau|^2|\xi_1|^{2\delta_1-1}(|\xi_1|^{2\delta_1}+|\xi_2|^{2\delta_2}).
$$
Hence
\begin{align*}
&R^{-\frac{3}{2}+1} \left(\int_{R<|(\tau,\xi_1,\xi_2)|<2R} \left| \Re\left[ \frac{\partial}{\partial{\xi_{1}}}m(\tau,\xi_1,\xi_2) \right] \right|^{2} \mathrm{d}\xi_1\mathrm{d}\xi_2 \mathrm{d}\tau\right)^{1/2}\\
&\geq CR^{-\frac{5}{2}}\left( \int_{\frac{5R}{4\sqrt{3}}<|\tau|,|\xi_1|,|\xi_2|<\frac{7R}{4\sqrt{3}}}|\tau|^4|\xi_1|^{4\delta_1-2}(|\xi_1|^{2\delta_1}+|\xi_2|^{2\delta_2})^2 \mathrm{d}\xi_1\mathrm{d}\xi_2 \mathrm{d}\tau\right)^{1/2}\\
&=CR^{2\delta_1-1}(R^{2\delta_1}+R^{2\delta_2}).
\end{align*}
Therefore, if $\delta_1>1/4$ or $\delta_1+\delta_2>1/2$, then the last term above goes to infinity as $R\to\infty$. Hence, $m(\tau,\xi_1,\xi_2)$ does not satisfy the condition for the Mikhlin multiplier theorem (see \textit{e.g.} \cite[Theorem 6.2.7]{grafakos2014classical}). Similarly, for $2^j\leq|\tau|,|\xi_2|<2^{j+1}$
\begin{align*}
    \int_{2^j\leq|\xi_1|<2^{j+1}} \left| \Re\left[ \frac{\partial}{\partial{\xi_{1}}}m(\tau,\xi_1,\xi_2) \right] \right| \mathrm{d}\xi_1\geq C2^{(2\delta_1-1)j}(2^{2\delta_1j}+2^{2\delta_2j}).
\end{align*}
This certainly implies that $m(\tau,\xi_1,\xi_2)$ does not satisfy the condition for the Marcikiewicz multiplier theorem (see \textit{e.g.} \cite[Theorem 6.2.4]{grafakos2014classical}).
\end{remark}

To prove Theorem \ref{main theorem}, the continuity of the operator $\mathcal{L}^{\vec{a},\vec{b}}(t)$ on $L_p$ is required.
This was established in \cite{BB2007,BBB2011}.
For the sake of completeness, we provide the details.
\begin{lemma}
\label{23.10.25.16.59}
    Let $p\in(1,\infty)$ and $\mathcal{L}^{\vec{a},\vec{b}}(t)$ be the operator in Theorem \ref{main theorem}.
    Then
    $$
    \|\mathcal{L}^{\vec{a},\vec{b}}(t)f\|_{L_p}\leq C\|(\vec{\phi}\cdot\Delta_{\vec{d}})f\|_{L_p},\quad \forall t\in[0,T],
    $$
    where $C=C(p,c_1,\vec{b}_{0})$.
\end{lemma}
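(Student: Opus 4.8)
The plan is to establish the bound coordinate-by-coordinate, that is, to show that for each fixed $i$ and each $t$,
\begin{equation*}
\left\| b_i(t)\Delta_{x_i} f + \int_{\mathbb{R}^{d}}\bigl(f(\vec{x}+\vec{y})-f(\vec{x})-\nabla_{\vec{x}}f(\vec{x})\cdot\vec{y}\mathbf{1}_{|\vec{y}|\leq1}\bigr)a_i(t,y_i)J_i(\mathrm{d}\vec{y})\right\|_{L_p}\leq C\|\phi_i(\Delta_{x_i})f\|_{L_p},
\end{equation*}
and then sum over $i=1,\dots,\ell$ and invoke the characterization \eqref{eqn 04.17.13:45} (equivalently Remark following it) to bound $\sum_i\|\phi_i(\Delta_{x_i})f\|_{L_p}$ by $\|(\vec{\phi}\cdot\Delta_{\vec{d}})f\|_{L_p}$. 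Because $J_i(\mathrm{d}\vec{y})$ is supported on the $i$-th coordinate slice, the $i$-th summand only involves the variable $x_i\in\mathbb{R}^{d_i}$ with the remaining coordinates frozen; thus it suffices to treat, for each frozen value of the other variables, a genuinely $d_i$-dimensional statement about the SBM generator $\phi_i(\Delta)$ with a bounded measurable multiplicative perturbation $a_i$ of its Lévy kernel. This reduces everything to the single-block ($\ell=1$) case, which is exactly the content of \cite{BB2007,BBB2011}: the operator $g\mapsto \int(g(x+y)-g(x)-\nabla g(x)\cdot y\mathbf 1_{|y|\le1})a(y)J(y)\mathrm dy + b\Delta g$ is $L_p$-bounded from the domain of $\phi(\Delta)$ with a constant depending only on $p$, the drift $b_0$, and the ellipticity constant $c_1$.

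The mechanism I would spell out for the single-block bound is to write the perturbed operator as $\phi(\Delta)$ composed with a Fourier/kernel multiplier operator that is $L_p$-bounded. Concretely, split $a_i=1+(a_i-1)$: the constant part $1$ reproduces $b_{0i}\Delta_{x_i}+\phi_i^{\mathrm{jump}}(\Delta_{x_i})$, and after rescaling the drift by $b_i(t)/b_{0i}\in[c_1,c_1^{-1}]$ this is exactly $\phi_i(\Delta_{x_i})$ up to a bounded factor (here one uses \eqref{23.03.08.23.11}, i.e. that the single-coordinate generator is controlled by the full generator; note if $b_{0i}=0$ there is no drift term to worry about). The genuinely perturbative piece is $\int(f(\vec x+\vec y)-f(\vec x)-\nabla f\cdot\vec y\mathbf 1_{|\vec y|\le1})(a_i(t,y_i)-1)J_i(\mathrm d\vec y)$, whose symbol (for frozen non-$i$ variables) is $\widehat K_i(t,\xi_i)=\int_{\mathbb R^{d_i}}(e^{i y_i\cdot\xi_i}-1-i y_i\cdot\xi_i\mathbf1_{|y_i|\le1})(a_i(t,y_i)-1)J_i(y_i)\mathrm dy_i$; one must verify $|\widehat K_i(t,\xi_i)|\le C\,\phi_i(|\xi_i|^2)$ with $C=C(p,c_1)$, so that $\widehat K_i/\phi_i(|\cdot|^2)$ is a bounded symbol, and moreover that the resulting operator composed with $\phi_i(\Delta)^{-1}$ is $L_p$-bounded (not merely $L_2$-bounded). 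The clean way, following \cite{BB2007,BBB2011}, is to realize $\big(\text{perturbed op}\big)\circ\phi_i(\Delta)^{-1}$ as a Calderón–Zygmund operator or as an operator with a kernel satisfying Hörmander-type cancellation, using the heat-kernel representation $J_i(y_i)=\int_{(0,\infty)}(4\pi s)^{-d_i/2}e^{-|y_i|^2/(4s)}\mu_i(\mathrm ds)$ together with the lower scaling hypothesis \eqref{e:H} to control $\int(1\wedge|y_i|^2\lambda)J_i(y_i)\mathrm dy_i\lesssim\phi_i(\lambda)$ and its higher-order analogues.

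The main obstacle is the $L_p$ (rather than $L_2$) boundedness of the perturbative multiplier operator: the symbol estimate $|\widehat K_i|\lesssim\phi_i(|\xi_i|^2)$ alone gives only $L_2$, and because $a_i$ is merely bounded measurable in $y_i$ the symbol need not be smooth, so neither Mikhlin nor Marcinkiewicz applies directly. The resolution — which is precisely why the statement is attributed to \cite{BB2007,BBB2011} — is that the kernel of $\big(\text{perturbed jump part}\big)\circ\phi_i(\Delta)^{-1}$ inherits enough regularity from the smooth, radially decreasing factor $\phi_i(\Delta)^{-1}$ (whose kernel decay and gradient bounds follow from the scaling condition, cf. Theorem \ref{pestimate}) to satisfy the Hörmander condition, with the non-smooth factor $a_i-1$ absorbed harmlessly into an $L_\infty$ constant. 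I would therefore structure the proof as: (1) reduce to one block by the support of $J_i$ and \eqref{eqn 04.17.13:45}; (2) dispatch the drift and the $a_i\equiv1$ part via \eqref{23.03.08.23.11}; (3) for the $a_i-1$ part, cite \cite[the relevant theorems]{BB2007,BBB2011} — or reproduce their argument — to get $L_p$-boundedness of the associated singular integral operator; (4) collect the constants, noting the dependence is only on $p$, $c_1$, and $\vec b_0$ (the scaling constants $c_0,\delta_0$ enter only through the fixed operator $\vec\phi\cdot\Delta_{\vec d}$, which is why they do not appear in the final constant).
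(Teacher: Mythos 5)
Your proposal reaches the right conclusion and leans on the right references, but it is organized quite differently from the paper's proof, and the extra structure you introduce is not actually needed. The paper's proof applies \cite[Theorem~1.1]{BBB2011} \emph{once}, directly to the full $d$-dimensional multiplier
$$
m(t,\vec\xi)=\frac{\int_{\mathbb{R}^d}(1-\cos(\vec y\cdot\vec\xi))\,\vec a(t,\vec y)\cdot\vec J(\mathrm d\vec y)+\tfrac12\int_{\mathbb{S}^{d-1}}|\vec\xi\cdot\vec\theta|^2\mu_t(\mathrm d\vec\theta)}{\int_{\mathbb{R}^d}(1-\cos(\vec y\cdot\vec\xi))\,\vec1\cdot\vec J(\mathrm d\vec y)+\tfrac12\int_{\mathbb{S}^{d-1}}|\vec\xi\cdot\vec\theta|^2\mu(\mathrm d\vec\theta)},
$$
after realizing the drift terms as spherical measures $\mu_t,\mu$ concentrated at the canonical basis vectors. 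That theorem is tailor-made for exactly this situation---a ratio of two Lévy symbols whose Lévy measures and Gaussian parts differ by a bounded measurable factor, with no symmetry, smoothness, or radiality assumed---so there is no need to split into coordinate blocks at all. In contrast, you first reduce to one block $i$ (which is sound, via Fubini slice-by-slice), then invoke the characterization $\|(\vec\phi\cdot\Delta_{\vec d})u\|_{L_p}\simeq\sum_i\|\phi_i(\Delta_{x_i})u\|_{L_p}$ from \eqref{eqn 04.17.13:45}, and finally apply the Ba\~nuelos--Bogdan machinery per block. This works, but it front-loads a separate multiplier theorem (the Marcinkiewicz argument behind \eqref{23.03.08.23.11}) that the paper's route sidesteps entirely; the per-block reduction buys you nothing, since the Ba\~nuelos--Bielaszewski--Bogdan theorem already handles anisotropy.

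One point worth correcting: you describe the mechanism behind \cite{BB2007,BBB2011} as ``realize $(\text{perturbed op})\circ\phi_i(\Delta)^{-1}$ as a Calder\'on--Zygmund operator or an operator with a H\"ormander kernel.'' That is not how those results are proved; they are established by probabilistic martingale-transform methods (stochastic integral representations of the operator combined with Burkholder--Davis--Gundy and sharp martingale inequalities), precisely because the perturbation $a_i$ is only bounded measurable and a kernel-regularity argument would be awkward. Since you ultimately cite the theorems rather than reproving them, this does not invalidate your conclusion, but your sketch of how the $L_p$-boundedness is obtained is inaccurate, and filling the gap ``reproduce their argument'' via H\"ormander cancellation as you suggest would be substantially harder than you make it sound.
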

\begin{proof}
    Let
    \begin{align*}
        \mu_{t}(\mathrm{d}\vec{x})&:=2\sum_{i=1}^{\ell}b_i(t)\epsilon_{0}^{i}(\mathrm{d}x_1,\cdots,\mathrm{d}x_{i-1},\mathrm{d}x_{i+1},\cdots,\mathrm{d}x_{\ell})\sum_{k=1}^{d_i}\epsilon_{e_i^k}(\mathrm{d}x_i),\\
        \mu(\mathrm{d}\vec{x})&:=2\sum_{i=1}^{\ell}b_{0i}\epsilon_{0}^{i}(\mathrm{d}x_1,\cdots,\mathrm{d}x_{i-1},\mathrm{d}x_{i+1},\cdots,\mathrm{d}x_{\ell})\sum_{k=1}^{d_i}\epsilon_{e_i^k}(\mathrm{d}x_i),
    \end{align*}
    where $\epsilon_0^i$ and $\epsilon_{e_i^k}$ are the Dirac measures on $\mathbb{R}^{d-d_i}$ centered at origin, and centered at $e_i^k$, respectively.
    Here $\{e_i^1,\cdots,e_i^{d_i}\}$ is the canonical basis of $\mathbb{R}^{d_i}$.
    Then it can be easily checked that $\mu$ and $\mu_t$ are finite nonnegative Borel measures on $\mathbb{S}^{d-1}$ and
    $$
    \frac{1}{2}\int_{\mathbb{S}^{d-1}}|\vec{\xi}\cdot\vec{\theta}|^2\mu_t(\mathrm{d}\vec{\theta})=\sum_{i=1}^{\ell}b_i(t)|\xi_i|^2,\quad \frac{1}{2}\int_{\mathbb{S}^{d-1}}|\vec{\xi}\cdot\vec{\theta}|^2\mu(\mathrm{d}\vec{\theta})=\sum_{i=1}^{\ell}b_{0i}|\xi_i|^2.
    $$
    By \cite[Theorem 1.1]{BBB2011}, for fixed $t$,
    \begin{align*}
        \|\mathcal{L}^{\vec{a},\vec{b}}(t)f\|_{L_p}=\left\|T_m(t)(\vec{\phi}\cdot\Delta_{\vec{d}})f\right\|_{L_p}\leq C\|(\vec{\phi}\cdot\Delta_{\vec{d}})f\|_{L_p},
    \end{align*}
    where $C=C(p,c_1,\vec{b}_0)$,
    $$
    T_{m}(t)f:=\mathcal{F}_d^{-1}[m(t,\cdot)\mathcal{F}_d[f]]
    $$
    and
    $$
    m(t,\vec{\xi}):=\frac{\int_{\mathbb{R}^d}(1-\cos(\vec{y}\cdot\vec{\xi}))\vec{a}(t,\vec{y})\cdot\vec{J}(\mathrm{d}\vec{y})+\frac{1}{2}\int_{\mathbb{S}^{d-1}}|\vec{\xi}\cdot\vec{\theta}|^2\mu_t(\mathrm{d}\vec{\theta})}{\int_{\mathbb{R}^d}(1-\cos(\vec{y}\cdot\vec{\xi}))\vec{1}\cdot\vec{J}(\mathrm{d}\vec{y})+\frac{1}{2}\int_{\mathbb{S}^{d-1}}|\vec{\xi}\cdot\vec{\theta}|^2\mu(\mathrm{d}\vec{\theta})}.
    $$
    The lemma is proved.
\end{proof}

\hfill

Now we prove Theorem \ref{main theorem}.
\begin{proof}[Proof of Theorem \ref{main theorem}]
Due to Remark \ref{Hvaluedconti} and Lemma \ref{basicproperty} $(iii)$, we only need to prove case $\gamma=0$.

\textbf{Uniqueness.} Let $u\in\mathbb{H}_{q,p,0}^{\vec{\phi},2}(T)$ be a solution of
\begin{equation*}
\begin{cases}
\partial_t u(t,\vec{x})=\mathcal{L}^{\vec{a},\vec{b}}(t)u(t,\vec{x}),\quad &(t,\vec{x})\in(0,T)\times\mathbb{R}^{\vec{d}},\\
u(0,\vec{x})=0,\quad & \vec{x}\in\mathbb{R}^{\vec{d}}.
\end{cases}
\end{equation*}
Then by Lemma \ref{basicproperty} $(iii)$, there exists a defining sequence $u_n\in C_c^{\infty}(\mathbb{R}^d_+)$ of $u$. Put $f_n:=\partial_tu_n-\mathcal{L}^{\vec{a},\vec{b}}u_n\in C_p^{\infty}([0,T]\times\mathbb{R}^d)$, then by Lemma \ref{H_p^phi,gamma space} $(iv)$ and Lemma \ref{23.10.25.16.59}, as $n\to\infty$,
\begin{align*}
    \|f_n\|_{L_{q,p}(T)}&=\|\partial_tu_n-\mathcal{L}^{\vec{a},\vec{b}}u_n\|_{L_{q,p}(T)}\\
    &\leq \|\partial_t(u_n-u)\|_{L_{q,p}(T)}+C\|(\vec{\phi}\cdot\Delta_{\vec{d}})(u_n-u)\|_{L_{q,p}(T)}\\
    &\leq C\|u_n-u\|_{\mathbb{H}_{q,p}^{\vec{\phi},2}(T)}\to0.
\end{align*}
We also have
$$
u_n(t,\vec{x})=\int_{0}^t\mathbb{E}[f_n(s,\vec{x}+\vec{Z}_t-\vec{Z}_s)]\mathrm{d}s
$$
from Lemma \ref{probrep}, where $\vec{Z}$ is an additive process with the characteristic exponent
$$
\mathbb{E}[\mathrm{e}^{i\vec{\xi}\cdot(\vec{Z}_t-\vec{Z}_s)}]=\exp\left(\int_s^t\int_{\mathbb{R}^d}(\mathrm{e}^{i\vec{\xi}\cdot\vec{y}}-1-i\vec{\xi}\cdot\vec{y}\mathbf{1}_{|\vec{y}|\leq1})\vec{a}(r,\vec{y})\cdot\vec{J}(\mathrm{d}\vec{y})\mathrm{d}r\right).
$$
Therefore, by Minkowski's inequality,
$$
\|u_n\|_{L_{q,p}(T)}\leq C(p,q,T)\|f_n\|_{L_{q,p}(T)}\to0
$$
as $n\to\infty$. This certainly implies that $u=0$.

\textbf{Existence and estimates.} Let $\{f_n\}_{n\in\mathbb{N}}\subseteq C_c^{\infty}(\mathbb{R}_{+}^{d})$ be an approximating sequence of $f$ in $L_{q,p}(T)$. With the help of Lemma \ref{probrep},
$$
u_n(t,\vec{x})=\int_{0}^t\mathbb{E}[f_n(s,\vec{x}+\vec{Z}_t-\vec{Z}_s)]\mathrm{d}s
$$
is a unique classical solution of 
\begin{equation*}
\begin{cases}
\partial_t u_n(t,\vec{x})=\mathcal{L}^{\vec{a},\vec{b}}(t)u_n(t,\vec{x})+f_n(t,\vec{x}),\quad &(t,\vec{x})\in(0,T)\times\mathbb{R}^{\vec{d}},\\
u_n(0,\vec{x})=0,\quad & \vec{x}\in\mathbb{R}^{\vec{d}}.
\end{cases}
\end{equation*}
Clearly, $u_n(0,\vec{x})=0$ and $
\|u_n\|_{L_{q,p}(T)}\leq C(p,q,T)\|f_n\|_{L_{q,p}(T)}$.
If we have
\begin{equation}
    \label{23.03.07.14.52}
    \|\mathcal{L}^{\vec{a},\vec{b}}u_n\|_{L_{q,p}(T)}+\|(\vec{\phi}\cdot\Delta_{\vec{d}})u_n\|_{L_{q,p}(T)}\leq C\|f_n\|_{L_{q,p}(T)},
\end{equation}
then Remark \ref{Hvaluedconti} and Lemma \ref{basicproperty}-($i$) give us the existence and estimates. We divide the proof of \eqref{23.03.07.14.52} into two cases.

\textit{Case 1.} $\vec{a}(t,\vec{y})=\vec{1}$, and $\vec{b}(t) = \vec{b}_{0}$.

By Theorem \ref{23.03.07.13.46},
$$
\|\mathcal{L}^{\vec{a},\vec{b}}u_n\|_{L_{q,p}(T)}+\|(\vec{\phi}\cdot\Delta_{\vec{d}})u_n\|_{L_{q,p}(T)}=2\|\mathcal{G} f_n\|_{L_{q,p}(T)}\leq C\|f_n\|_{L_{q,p}(T)}.
$$

\textit{Case 2.} General case.

This case follows from Case 1. Let $\vec{c}_1:=(c_1,\cdots,c_1)\in\mathbb{R}^{\ell}$ and
\begin{equation*}
\begin{gathered}
\nu_t^0(B):=\int_0^t\int_{\mathbb{R}^d}\mathbf{1}_{B}(\vec{y})\vec{a}(s,\vec{y})\cdot\vec{J}(\mathrm{d}\vec{y})\mathrm{d}s,\\
\nu_t^1(B):=\int_0^t\int_{\mathbb{R}^d}\mathbf{1}_{B}(\vec{y})(\vec{a}(s,\vec{y})-\vec{c}_1)\cdot\vec{J}(\mathrm{d}\vec{y})\mathrm{d}s,\\
    D_0(t):=\int_0^t\mathrm{diag}(\vec{b}(s))\mathrm{d}s\in\mathbb{R}^{d\times d},\quad D_1(t):=\int_0^t\mathrm{diag}(\vec{b}(s)-c_1\vec{b}_0)\mathrm{d}s\in\mathbb{R}^{d\times d}.
\end{gathered}
\end{equation*}
Then for a Borel measurable set $B$ and $s\leq t$,
$$
\nu_s^i(B)\leq \nu_t^i(B),\quad i=0,1
$$
and a Borel measurable set $B'$ with $B'\subseteq\{\vec{x}\in\mathbb{R}^{\vec{d}}:|\vec{x}|\geq\varepsilon\}$, $\varepsilon>0$,
$$
\lim_{s\to t}\nu_s^i(B')=\nu_t^i(B'),\quad i=0,1.
$$
Similarly, we can also check for $\vec{z}\in\mathbb{R}^{\vec{d}}$ and for $s\leq t$,
$$
\langle \vec{z}, D_i(s)\vec{z}\rangle\leq \langle \vec{z},D_i(t)\vec{z}\rangle,\quad \lim_{s\to t}\langle \vec{z},D_i(s)\vec{z}\rangle=\langle\vec{z},D_i(t)\vec{z}\rangle,\quad i=0,1.
$$
Therefore, $D_0,D_1,\nu^0$ and $\nu^1$ satisfy assumptions in \cite[Theorem 9.8]{sato1999} and hence, there exist additive processes $\vec{Y}$, $\vec{Z}$ with triplets $(0,D_1(t),\nu_t^1)_{t\geq0}$ and $(0,D_0(t),\nu_t^0)_{t\geq0}$, respectively due to the theorem.

By Lemma \ref{probrep},
\begin{equation}
\label{23.03.30.12.38}
    v_n(t,\vec{x},\omega)=\int_0^t\mathbb{E}_{\omega'}[f_n(s,\vec{x}-\vec{Y}_{s}(\omega)+\vec{X}_{t-s}(\omega'))]\mathrm{d}s\in C_p^{1,\infty}([0,T]\times\mathbb{R}^d)
\end{equation}
is a unique solution of
\begin{equation*}
\begin{cases}
\partial_tv_n(t,\vec{x},\omega)=c_1(\vec{\phi}\cdot\Delta_{\vec{d}})v_n(t,\vec{x},\omega)+f_n(t,\vec{x}-\vec{Y}_t(\omega)),\quad &(t,\vec{x})\in(0,T)\times\mathbb{R}^{\vec{d}},\\
u_n(0,\vec{x})=0,\quad & \vec{x}\in\mathbb{R}^{\vec{d}},
\end{cases}
\end{equation*}
where $\vec{X}$ is an IASBM corresponding to $c_1\vec{\phi}=(c_1\phi_1,\cdots,c_1\phi_{\ell})$ and $\mathbb{E}_{\omega'}[h(\omega')]:=\int_{\Omega'}h(\omega')\mathbb{P}(\mathrm{d}\omega')$.
Let
$$
u_n(t,\vec{x}):=\mathbb{E}_{\omega}[v_n(t,\vec{x}+\vec{Y}_t(\omega),\omega)].
$$
Clearly, $\{u_n\}_{n\in\mathbb{N}}\subseteq C_p^{1,\infty}([0,T]\times\mathbb{R}^d)$ and $u_n$ satisfies $u_n(0,\vec{x})=0$.
By \eqref{23.03.30.12.38} and Fubini's theorem,
\begin{equation}
\label{23.03.06.14.41}
\begin{aligned}
u_n(t,\vec{x})&=\int_{0}^t
\mathbb{E}_{\omega}\mathbb{E}_{\omega'}[f_n(s,x+(\vec{Y}_t-\vec{Y}_s)(\omega)+\vec{X}_{t-s}(\omega'))]\mathrm{d}s.
\end{aligned}
\end{equation}
Fubini's theorem and \eqref{23.03.06.14.41} yield that
\begin{equation*}
\begin{aligned}
\mathcal{F}_d[u_n(t,\cdot)](\vec{\xi})&=\int_0^t\mathbb{E}_{\omega}\mathbb{E}_{\omega'}[\mathrm{e}^{i\vec{\xi}\cdot(\vec{X}_{t-s}(\omega)+\vec{Y}_t(\omega')-\vec{Y}_s(\omega'))}]\mathcal{F}_d[f_n(s,\cdot)](\vec{\xi})\mathrm{d}s\\
&=\mathcal{F}_d\left[\int_{0}^t\mathbb{E}[f_n(s,\cdot+\vec{Z}_t-\vec{Z}_s)]ds\right](\vec{\xi}).
\end{aligned}
\end{equation*}
Hence, by Lemma \ref{probrep}, $u_n$ is a unique classical solution of 
\begin{equation*}
\begin{cases}
\partial_t u_n(t,\vec{x})=\mathcal{L}^{\vec{a},\vec{b}}(t)u_n(t,\vec{x})+f_n(t,\vec{x}),\quad &(t,\vec{x})\in(0,T)\times\mathbb{R}^{\vec{d}},\\
u_n(0,\vec{x})=0,\quad & \vec{x}\in\mathbb{R}^{\vec{d}}.
\end{cases}
\end{equation*}
With the help of Case 1,
\begin{align*}
     \|(\vec{\phi}\cdot\Delta_{\vec{d}})u_n\|_{L_{q,p}(T)}\leq \mathbb{E}_{\omega} \left[\|(\vec{\phi}\cdot\Delta_{\vec{d}})v_n(\omega)\|_{L_{q,p}(T)}\right]\leq C  \|f_n\|_{L_{q,p}(T)},
\end{align*}
where $C=C(\vec{b}_0,d,\delta_0,c_0,c_1,p,q)$ and \eqref{23.03.07.14.52} holds. 
By  Lemma \ref{23.10.25.16.59},
\begin{align*}
    \|\mathcal{L}^{\vec{a},\vec{b}}u_n\|_{L_{q,p}(T)}\leq c_1\|(\vec{\phi}\cdot\Delta_{\vec{d}})u_n\|_{L_{q,p}(T)}\leq C(\vec{b}_0,d,\delta_0,c_0,c_1,p,q)\|f_n\|_{L_{q,p}(T)}.
\end{align*}
The theorem is proved.
\end{proof}

\mysection{Proof of Theorem \ref{23.03.07.13.46}}
\label{23.03.08.15.15}
This section is dedicated to proving Theorem \ref{23.03.07.13.46}.
Our main objective is to obtain BMO-$L_{\infty}$ estimates (Theorem \ref{23.02.22.17.33}):
$$
\|\mathcal{G} f\|_{BMO(\mathbb{R}^{d+1})}\leq C\|f\|_{L_{\infty}(\mathbb{R}^{d+1})}
$$
which will play a crucial role in proving Theorem \ref{23.03.07.13.46}.
We note that having BMO-$L_{\infty}$ and $L_2$-$L_2$ estimates enables us to apply the Marcinkiewicz interpolation theorem and the Fefferman-Stein theorem to obtain $L_p$-$L_p$ estimates for $p\geq2$. By utilizing the duality arguments and the Banach space-valued Calder\'on-Zygmund theorem (see, \textit{e.g.}, \cite[Theorem 4.1]{krylov2001caideron}), we can establish Theorem \ref{23.03.07.13.46}.

As a crucial step towards proving both Theorem \ref{23.03.07.13.46} and \ref{23.02.22.17.33}, we begin by proving Theorem \ref{23.03.07.13.46} with $p=q=2$. This lemma plays a fundamental role in our argument, and its proof relies on Plancherel's theorem.
\begin{lemma} \label{22estimate}
There exists a positive constant $C=C(d)$ such that for any $f\in C_c^\infty(\mathbb{R}^{d+1})$
\begin{equation*}
\|\mathcal{G} f\|_{L_2(\mathbb{R}^{d+1})}\leq C \|f\|_{L_2(\mathbb{R}^{d+1})}.
\end{equation*}
\end{lemma}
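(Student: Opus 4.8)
The plan is to reduce the estimate to a pointwise bound on the Fourier multiplier of $\mathcal{G}$ and then invoke Plancherel's theorem. Set $\psi(\vec{\xi}):=\sum_{i=1}^{\ell}\phi_i(|\xi_i|^2)\ge 0$. The first step is to confirm the multiplier identity recorded in Remark \ref{rmk 02.14.16:17}:
\begin{equation*}
\mathcal{F}_{d+1}[\mathcal{G} f](\tau,\vec{\xi})=m(\tau,\vec{\xi})\,\mathcal{F}_{d+1}[f\mathbf{1}_{\mathbb{R}_+}](\tau,\vec{\xi}),\qquad m(\tau,\vec{\xi}):=\frac{\psi(\vec{\xi})}{i\tau+\psi(\vec{\xi})}.
\end{equation*}
To make this precise for $f\in C_c^\infty(\mathbb{R}^{d+1})$, I would first observe that $(\vec{\phi}\cdot\Delta_{\vec{d}})f(s,\cdot)$ is bounded and integrable, uniformly in $s$, and supported in a compact $s$-interval; hence $\|\mathcal{G} f(t,\cdot)\|_{L_1}\le\int_{\mathbb{R}}\mathbf{1}_{0<s<t}\|(\vec{\phi}\cdot\Delta_{\vec{d}})f(s,\cdot)\|_{L_1}\,\mathrm{d}s$, and similarly for $\|\cdot\|_{L_\infty}$, so that $\mathcal{G} f\in L_\infty(\mathbb{R};L_2(\mathbb{R}^d))$; in particular $\mathcal{G} f$ is a well-defined tempered distribution whose Fourier transform makes sense. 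The same integrability lets one interchange the spatial Fourier transform with the $s$-integral and with the expectation, and combining this with \eqref{23.04.18.11.25}, i.e. $\mathbb{E}[\mathrm{e}^{i\vec{\xi}\cdot\vec{X}_r}]=\mathrm{e}^{-r\psi(\vec{\xi})}$, together with a Fubini argument in the time variable, produces the displayed identity.

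The second, and decisive, step is the elementary observation that $m$ is a contraction: since $\psi(\vec{\xi})\ge 0$,
\begin{equation*}
|m(\tau,\vec{\xi})|=\frac{\psi(\vec{\xi})}{\sqrt{\tau^2+\psi(\vec{\xi})^2}}\le 1\qquad\text{for all }(\tau,\vec{\xi})\in\mathbb{R}\times\mathbb{R}^{\vec{d}}.
\end{equation*}

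Finally, combining the two steps with Plancherel's theorem in $\mathbb{R}^{d+1}$ gives
\begin{equation*}
\|\mathcal{G} f\|_{L_2(\mathbb{R}^{d+1})}=(2\pi)^{-(d+1)/2}\,\|m\,\mathcal{F}_{d+1}[f\mathbf{1}_{\mathbb{R}_+}]\|_{L_2}\le(2\pi)^{-(d+1)/2}\,\|\mathcal{F}_{d+1}[f\mathbf{1}_{\mathbb{R}_+}]\|_{L_2}=\|f\mathbf{1}_{\mathbb{R}_+}\|_{L_2}\le\|f\|_{L_2(\mathbb{R}^{d+1})},
\end{equation*}
so the estimate holds, in fact with $C=1$ (which one may of course weaken to a constant depending only on $d$). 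I do not expect a genuine obstacle here: the only point that needs care is the bookkeeping behind the multiplier identity — verifying that $\mathcal{G} f$ is a bona fide tempered distribution and that the Fubini interchanges used to pass to the Fourier side are legitimate — and both follow at once from the compact support and smoothness of $f$. The contractivity $|m|\le 1$ that trivializes the $L_2$ bound is precisely the structure that breaks down for $L_p$ with $p\neq2$, as the computation in Remark \ref{rmk 02.14.16:17} shows, and that is where the substantive difficulty of the paper is concentrated.
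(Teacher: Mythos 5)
Your proof is correct, and it is a close cousin of the paper's but not identical in its bookkeeping. The paper applies Plancherel only in the spatial variable, writes the resulting time integral as a convolution against the kernel $s\mapsto\psi(\vec{\xi})\mathrm{e}^{-s\psi(\vec{\xi})}\mathbf{1}_{s>0}$, and then invokes Minkowski's integral inequality (equivalently, Young's convolution inequality) together with $\int_0^\infty\psi(\vec{\xi})\mathrm{e}^{-s\psi(\vec{\xi})}\,\mathrm{d}s=1$. You instead take the Fourier transform in all $d+1$ variables, identify the full space-time multiplier $m(\tau,\vec{\xi})=\psi(\vec{\xi})/(i\tau+\psi(\vec{\xi}))$, and observe $|m|\le 1$. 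These are two views of the same estimate: $m(\tau,\vec{\xi})$ is precisely the one-dimensional Fourier transform in $s$ of that time kernel, and the pointwise bound $|m|\le 1$ is exactly the statement $\|\widehat{k}\|_{L_\infty}\le\|k\|_{L_1}=1$ that underlies the paper's Minkowski step. Your variant is slightly cleaner — one Plancherel application instead of Plancherel plus a convolution estimate — and, as you point out, it makes visible why the argument stops at $p=2$: the $L_\infty$-bound on the multiplier buys you nothing in $L_p$ without additional regularity of $m$, which is exactly what Remark \ref{rmk 02.14.16:17} shows to fail. The constant $C=1$ you obtain (in the paper's $(2\pi)$-convention it will be a dimensional constant; the paper's normalization of $\mathcal{F}$ and $\mathcal{F}^{-1}$ is asymmetric, so Plancherel carries a $(2\pi)^{d+1}$ factor) is consistent with the paper's $C(d)$.
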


\begin{proof}
Recall that 
$$
\mathcal{G} f(t,\vec{x})=\int_{-\infty}^t\mathbb{E}[(\vec{\phi}\cdot\Delta_{\vec{d}})f(s,\vec{x}+\vec{X}_{t-s})]\mathbf{1}_{s>0}\mathrm{d}s
$$
Due to Plancherel's theorem and Minkowski's inequality, we have
\begin{align*}
    &\|\mathcal{G} f\|_{L_2(\mathbb{R}^{d+1})}^2\\
    &=C(d)\int_{\mathbb{R}^{\vec{d}}}\int_{0}^{\infty}\left|\int_0^{t}\sum_{i=1}^{\ell}\phi_i(|\xi|^2)\exp\left(-s\sum_{i=1}^{\ell}\phi_i(|\xi|^2)\right)\mathcal{F}_d[f(t-s,\cdot)](\vec{\xi})\mathrm{d}s\right|^2\mathrm{d}t\mathrm{d}\vec{\xi}\\
    &\leq C(d)\int_{\mathbb{R}^{\vec{d}}}\int_{0}^{\infty}|\mathcal{F}_{d}[f(t,\cdot)](\vec{\xi})|^2 \mathrm{d}t\left(\sum_{i=1}^{\ell}\phi_i(|\xi|^2)\int_0^{\infty}\exp\left(-s\sum_{i=1}^{\ell}\phi_i(|\xi|^2)\right)\mathrm{d}s\right)^{2}\mathrm{d}\vec{\xi}\\
    &=C(d)\|\mathcal{F}_d[f]\|_{L_2((0,\infty)\times\mathbb{R}^d)}\leq C(d)\|f\|_{L_2(\mathbb{R}^{d+1})}.
\end{align*}
The lemma is proved.
\end{proof}
It is worth noting that Assumption \ref{23.03.03.16.04} is not necessary for the proof of Lemma \ref{22estimate}. However, in order to obtain BMO-$L_{\infty}$ estimates, we utilize the upper bounds of the heat kernel associated with the IASBM. This is where Assumption \ref{23.03.03.16.04} becomes necessary. In other words, the assumption ensures that the heat kernel has certain decay properties, which are crucial in the derivation of the desired estimates. Therefore, in the subsequent sections, we assume that Assumption \ref{23.03.03.16.04} holds.

\subsection{Estimations of IASBM's heat kernel}
In this subsection, we focus on estimating the upper bounds of the heat kernel for the IASBM $\vec{X}=(X^1,\cdots,X^{\ell})$ to derive the desired BMO-$L_{\infty}$ estimates.
Since each component process $X^i$ is independent, the heat kernel of $\vec{X}$, denoted by $p(t,\vec{x})$, can be expressed as a product of the heat kernels of component processes $X^1,\cdots,X^{\ell}$:
\begin{equation}
\label{23.03.09.15.47}
    p(t,\vec{x})=\prod_{i=1}^{\ell}p_i(t,x_i),\quad \forall (t,\vec{x})\in(0,\infty)\times\mathbb{R}^{\vec{d}},
\end{equation}
where $p_i$ is the heat kernel of $X^i$. Therefore, by obtaining upper bounds of the heat kernels for each component process, we can obtain upper bounds for the IASBM.

Before delving further into the main result of this subsection, we first investigate some fundamental properties of the heat kernel for the component processes. Assumption \ref{23.03.03.16.04} implies that
$$
\exp\left(-t\phi_i(|\xi_i|^2)\right)\in L_1(\mathbb{R}^{d_i})\cap  L_{\infty}(\mathbb{R}^{d_i}), \quad i=1,\cdots,\ell,
$$
which in turn yields that $X^i$ has the transition density given by
\begin{equation}\label{eqn 7.20.1}
p_{i}(t,x_{i}):=p_{d_{i}}(t,x_{i})=\frac{1}{(2\pi)^{d_{i}}} \int_{\mathbb{R}^{d_{i}}} \mathrm{e}^{i \xi_{i} \cdot x_{i}} \mathrm{e}^{-t\phi_{i}(|\xi_{i}|^2)} \mathrm{d}\xi_{i}.
\end{equation}
By \cite[Section 30.1]{sato1999}, we know that for any $t>0$ and $x_{i}\in \mathbb{R}^{d_{i}}$,
\begin{eqnarray}
\label{eqn 7.20.2}
p_{i}(t,x_{i})=\int_{(0,\infty)} (4\pi s)^{-d_{i}/2} \exp \left(-\frac{|x_{i}|^2}{4s}\right) \eta^{i}_{t} (\mathrm{d}s),
\end{eqnarray}
where $\eta^{i}_{t}(\mathrm{d}s)$ is the distribution of $S^{i}_{t}$. This implies that $X^{i}_t$ is rotationally invariant in $\mathbb{R}^{d_i}$. For convenience, we introduce the notation $p_{i}(t,r):=p_{i}(t,x_i)$ if $r=|x_{i}|$. Furthermore, as shown in \eqref{eqn 7.20.2}, we see that $r\mapsto p_{i}(t,r)$ is a decreasing function. In order to establish upper bounds for the heat kernels of each component process, we provide the following lemma.
\begin{lemma} \label{gammaversionj}
Let Assumption \ref{23.03.03.16.04} hold and $\nu>0$.

(i) Then for each $i=1,\cdots,\ell$, 
there exists a constant $C_{i}=C(c_{0}, \delta_0,\nu)$ such that
\begin{equation} \label{int phi}
\int_{\lambda^{-1}}^\infty r^{-1}(\phi_{i}(r^{-2}))^{\nu}\mathrm{d}r \leq C_{i} (\phi_{i}(\lambda^2))^{\nu}, \quad \forall\, \lambda>0.
\end{equation}

(ii) Let $k\in \mathbb{N}_{0}$. Then 
\begin{equation}\label{eqn 02.02.18:47}
\int_{\mathbb{R}^{d_{i}}} \left(t^{-\nu k}\left(\phi_{i}^{-1}(t^{-1})\right)^{d_{i}/2}\wedge t^{\nu-\nu k}\frac{(\phi_{i}(|x_{i}|^{-2}))^{\nu}}{|x_{i}|^{d_{i}}}\right) \mathrm{d}x_{i} \leq C_{i} t^{-\nu k},
\end{equation}
where $C_{i}=C_{i}(d_{i},c_{0},\delta_{0},k,\nu)$.
\end{lemma}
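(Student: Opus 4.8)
The plan is to handle the two parts in order, deducing part (ii) from part (i).

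For part (i), I would first perform the change of variables $s=r^{-2}$. Since $r^{-1}\,\mathrm{d}r=-\tfrac12 s^{-1}\,\mathrm{d}s$ and the limits $r=\lambda^{-1}$, $r=\infty$ become $s=\lambda^2$, $s=0$, the integral in \eqref{int phi} equals $\tfrac12\int_0^{\lambda^2}s^{-1}(\phi_i(s))^{\nu}\,\mathrm{d}s$. On the range $0<s<\lambda^2$, the lower scaling bound \eqref{e:H} gives $\phi_i(s)\le c_0^{-1}(s/\lambda^2)^{\delta_0}\phi_i(\lambda^2)$; raising this to the power $\nu$ and using $\int_0^{\lambda^2}s^{\nu\delta_0-1}\,\mathrm{d}s=(\lambda^2)^{\nu\delta_0}/(\nu\delta_0)$ yields \eqref{int phi} with $C_i=c_0^{-\nu}/(2\nu\delta_0)$, which depends only on $c_0,\delta_0,\nu$ as required.

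For part (ii), the key idea is to split $\mathbb{R}^{d_i}$ at the radius $r_t:=(\phi_i^{-1}(t^{-1}))^{-1/2}$, which is exactly where the two quantities inside the minimum — the constant $\mathrm{I}:=t^{-\nu k}(\phi_i^{-1}(t^{-1}))^{d_i/2}$ and $\mathrm{II}(x_i):=t^{\nu-\nu k}(\phi_i(|x_i|^{-2}))^{\nu}|x_i|^{-d_i}$ — agree: by the choice of $r_t$ one has $\phi_i(r_t^{-2})=t^{-1}$, hence $\mathrm{II}=\mathrm{I}$ when $|x_i|=r_t$. Using only that $\phi_i$ is increasing and $|x_i|\mapsto|x_i|^{-d_i}$ is decreasing, I would check that $\mathrm{II}(x_i)\ge \mathrm{I}$ for $|x_i|\le r_t$ and $\mathrm{II}(x_i)\le \mathrm{I}$ for $|x_i|>r_t$, so that the minimum equals $\mathrm{I}$ on the ball of radius $r_t$ and $\mathrm{II}(x_i)$ outside it. Then
\[
\int_{\mathbb{R}^{d_i}}\Big(t^{-\nu k}(\phi_i^{-1}(t^{-1}))^{d_i/2}\wedge \mathrm{II}(x_i)\Big)\,\mathrm{d}x_i=\mathrm{I}\cdot c(d_i)\, r_t^{d_i}+\int_{|x_i|>r_t}\mathrm{II}(x_i)\,\mathrm{d}x_i .
\]
The first term equals $c(d_i)\,t^{-\nu k}$ since $r_t^{d_i}=(\phi_i^{-1}(t^{-1}))^{-d_i/2}$; for the second, passing to polar coordinates turns it into $c(d_i)\,t^{\nu-\nu k}\int_{r_t}^{\infty}r^{-1}(\phi_i(r^{-2}))^{\nu}\,\mathrm{d}r$, which by part (i) applied with $\lambda=r_t^{-1}$ is bounded by $c(d_i)\,t^{\nu-\nu k}C_i(\phi_i(r_t^{-2}))^{\nu}=c(d_i)C_i\,t^{-\nu k}$. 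Adding the two contributions gives \eqref{eqn 02.02.18:47}.

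I do not anticipate a serious obstacle: both reductions are elementary once set up. The only point requiring a little care is the verification that $r_t$ is genuinely the crossover radius of the minimum — i.e., the two comparisons between $\mathrm{I}$ and $\mathrm{II}(x_i)$ on the regions $\{|x_i|\le r_t\}$ and $\{|x_i|>r_t\}$ — but this follows directly from the monotonicity of $\phi_i$, and no input beyond Assumption \ref{23.03.03.16.04} is needed.
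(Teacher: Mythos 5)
Your argument is correct and coincides with the paper's proof in all essentials: part (i) is a change of variables plus the lower scaling bound (the paper rescales $r\mapsto\lambda^{-1}r$ instead of substituting $s=r^{-2}$, but the idea is identical), and part (ii) splits $\mathbb{R}^{d_i}$ at the crossover radius $(\phi_i^{-1}(t^{-1}))^{-1/2}$ and invokes part (i) for the outer integral exactly as the paper does.
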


\begin{proof}
(i) By the change of variables and \eqref{phiratio},
\begin{equation*}
\begin{aligned}
\int_{\lambda^{-1}}^{\infty}r^{-1}(\phi_{i}(r^{-2}))^{\nu}\mathrm{d}r&=\int_{1}^{\infty}r^{-1}(\phi_{i}(\lambda^{2}r^{-2}))^{\nu}\frac{(\phi(\lambda^{2}))^{\nu}}{(\phi(\lambda^{2}))^{\nu}}\mathrm{d}r\leq C_{i}(\phi_{i}(\lambda^{2}))^{\nu}.
\end{aligned}
\end{equation*}

(ii) We can observe that
\begin{align*}
&\int_{\mathbb{R}^{d_{i}}} \left(t^{-\nu k}\left(\phi_{i}^{-1}(t^{-1})\right)^{d_{i}/2}\wedge t^{\nu -\nu k}\frac{(\phi_{i}(|x_{i}|^{-2}))^{\nu}}{|x_{i}|^{d_{i}}}\right) \mathrm{d}x_{i}
\\
&= C_{i} \Big( \int_{|x_{i}| \leq (\phi^{-1}_{i}(t^{-1}))^{-1/2}} t^{-\nu k}\left(\phi_{i}^{-1}(t^{-1})\right)^{d_{i}/2}  \mathrm{d}x_{i}
\\
&\qquad +\int_{|x_{i}| > (\phi^{-1}_{i}(t^{-1}))^{-1/2}}t^{\nu -\nu k}\frac{(\phi_{i}(|x_{i}|^{-2}))^{\nu}}{|x_{i}|^{d_{i}}} \mathrm{d}x_{i}\Big)\\
&\leq C_{i}t^{-\nu k} + C_{i} \int_{(\phi^{-1}_{i}(t^{-1}))^{-1/2}}^{\infty} t^{\nu -\nu k} (\phi_{i}(\rho^{-2}))^{\nu} \rho^{-1} \mathrm{d}\rho
\\
&\leq C_{i}t^{-\nu k} + C_{i}t^{\nu -\nu k} (\phi_{i}(\phi_{i}^{-1}(t^{-1})))^{\nu} \leq C_{i} t^{-\nu k},
\end{align*}
where for the third inequality, we used \eqref{int phi}. The lemma is proved.
\end{proof}

Now we present upper bounds of the heat kernel for each component processes.
 
\begin{theorem} \label{pestimate}
Let $i=1,\cdots\ell$ and Assumption \ref{23.03.03.16.04} hold. 

(i) For any $m,k\in \mathbb{N}_{0}$, and $\nu\in(0,1)$, we have
\begin{align}\label{eqn 02.09.16:49}
|\phi_{i}(\Delta_{x_{i}})^{\nu k}D^{m}_{x_{i}}p_{i}(t,x_{i})| \leq C_{i}  \left(t^{-\nu  k}\left(\phi_{i}^{-1}(t^{-1})\right)^{\frac{d_{i}+m}{2}}\wedge t^{\nu-\nu k}\frac{(\phi_{i}(|x_{i}|^{-2}))^{\nu}}{|x_{i}|^{d_{i}+m}}\right),
\end{align} 
where the constant $C_{i}$ depends only on $c_{0},\delta_{0},d_{i},m,k,\nu$. In particular, we have

\begin{equation} \label{pestimate ineq}
|\phi_{i}(\Delta_{x_{i}})^{k}D^{m}_{x_{i}}p_{i}(t,x_{i})| \leq C_{i}  \left(t^{-k}\left(\phi_{i}^{-1}(t^{-1})\right)^{\frac{d_{i}+m}{2}}\wedge t^{\frac{1}{2}-k}\frac{(\phi_{i}(|x_{i}|^{-2}))^{1/2}}{|x_{i}|^{d_{i}+m}}\right),
\end{equation}

(ii) For any $k=0,1,\dots$, we have
\begin{equation}\label{eqn 01.06.17:16}
\int_{\mathbb{R}^{d_{i}}} |\phi_{i}(\Delta_{x_{i}})^{k}p_{i}(t,x_{i})| \mathrm{d}x_{i} \leq C_{i} t^{-k},
\end{equation}
where the constant $C_{i}$ depends only on $c_{0},\delta_{0},d_{i},k$.
\end{theorem}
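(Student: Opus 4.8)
The plan is to prove the pointwise bounds of part $(i)$ directly from the Fourier representation \eqref{eqn 7.20.1} of $p_{i}$, and then to obtain part $(ii)$ as an immediate corollary of $(i)$ and Lemma \ref{gammaversionj}$(ii)$. Fix $i$; since $D^{m}_{x_{i}}$ denotes an arbitrary partial derivative of order $m$, fix also a multi-index $\sigma$ with $|\sigma|=m$. Differentiating \eqref{eqn 7.20.1} and applying the Fourier multiplier $(\phi_{i}(|\xi_{i}|^{2}))^{\nu k}$ yields
\begin{equation*}
\phi_{i}(\Delta_{x_{i}})^{\nu k}D^{\sigma}_{x_{i}}p_{i}(t,x_{i})=C\int_{\mathbb{R}^{d_{i}}}\mathrm{e}^{i\xi_{i}\cdot x_{i}}(i\xi_{i})^{\sigma}(\phi_{i}(|\xi_{i}|^{2}))^{\nu k}\mathrm{e}^{-t\phi_{i}(|\xi_{i}|^{2})}\,\mathrm{d}\xi_{i},
\end{equation*}
so the whole problem reduces to estimating this oscillatory integral. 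Comparing the two terms on the right of \eqref{eqn 02.09.16:49} (exactly as in the computation following \eqref{eqn 02.02.18:47}), their minimum equals the first term precisely when $|x_{i}|\le(\phi_{i}^{-1}(t^{-1}))^{-1/2}$; hence it is enough to prove the first bound for all $x_{i}$ and the second one only for $|x_{i}|>(\phi_{i}^{-1}(t^{-1}))^{-1/2}$, a range in which $\phi_{i}(|x_{i}|^{-2})<t^{-1}$.

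For the first bound, estimate the integrand in modulus and pass to polar coordinates, reducing matters to $\int_{0}^{\infty}\rho^{d_{i}+m-1}(\phi_{i}(\rho^{2}))^{\nu k}\mathrm{e}^{-t\phi_{i}(\rho^{2})}\,\mathrm{d}\rho\le Ct^{-\nu k}(\phi_{i}^{-1}(t^{-1}))^{(d_{i}+m)/2}$. Split at $\rho_{\ast}:=(\phi_{i}^{-1}(t^{-1}))^{1/2}$: on $(0,\rho_{\ast})$ use $(\phi_{i}(\rho^{2}))^{\nu k}\le t^{-\nu k}$ and integrate $\rho^{d_{i}+m-1}$; on $(\rho_{\ast},\infty)$ use $(\phi_{i}(\rho^{2}))^{\nu k}\mathrm{e}^{-t\phi_{i}(\rho^{2})}\le Ct^{-\nu k}\mathrm{e}^{-t\phi_{i}(\rho^{2})/2}$ together with the lower scaling \eqref{e:H}, which gives $t\phi_{i}(\rho^{2})\ge c_{0}(\rho/\rho_{\ast})^{2\delta_{0}}$ and hence a convergent super-polynomial tail. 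Both pieces are $\le Ct^{-\nu k}\rho_{\ast}^{d_{i}+m}$, which is exactly the computation behind Lemma \ref{gammaversionj}$(i)$.

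For the second bound one must use the oscillation, and the key analytic input is the self-improving inequality $\lambda^{n}|\phi_{i}^{(n)}(\lambda)|\le C(n)\phi_{i}(\lambda)$ from \eqref{eqn 07.19.14.35}, which propagates (via $(\phi_{i}^{\nu k})'=\nu k\,\phi_{i}^{\nu k-1}\phi_{i}'$ and induction) to $\lambda^{n}|(\phi_{i}^{\nu k})^{(n)}(\lambda)|\le C\,\phi_{i}^{\nu k}(\lambda)$ for every $\nu k\ge0$; combining this with the chain rule (compare \eqref{23.03.08.14.35}--\eqref{23.03.08.14.36}) gives, for $\xi_{i}\neq0$,
$$
\bigl|D^{\beta}_{\xi_{i}}\bigl((i\xi_{i})^{\sigma}(\phi_{i}(|\xi_{i}|^{2}))^{\nu k}\mathrm{e}^{-t\phi_{i}(|\xi_{i}|^{2})}\bigr)\bigr|\le C\,|\xi_{i}|^{m-|\beta|}(\phi_{i}(|\xi_{i}|^{2}))^{\nu k}\mathrm{e}^{-t\phi_{i}(|\xi_{i}|^{2})/2},
$$
i.e. each $\xi_{i}$-derivative trades one power of $|\xi_{i}|$ for $|\xi_{i}|^{-1}$ while $\mathrm{e}^{-t\phi_{i}/2}$ swallows the powers of $t\phi_{i}$. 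I would split the integral at $|\xi_{i}|=|x_{i}|^{-1}$: on $\{|\xi_{i}|<|x_{i}|^{-1}\}$ no integration by parts is needed when $k\ge1$ (there $(\phi_{i}(|\xi_{i}|^{2}))^{\nu k}\le(\phi_{i}(|x_{i}|^{-2}))^{\nu k}$ and $(t\phi_{i}(|x_{i}|^{-2}))^{\nu k-\nu}\le1$ suffice), and one integration by parts handles $k=0$; on $\{|\xi_{i}|\ge|x_{i}|^{-1}\}$ one decomposes dyadically into shells $|\xi_{i}|\simeq2^{j}|x_{i}|^{-1}$, integrates by parts with $|x_{i}|^{-2}(x_{i}\cdot\nabla_{\xi_{i}})$ as often as needed on the $j$-th shell, and sums over $j\ge0$. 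This produces the sharp polynomial decay $|x_{i}|^{-(d_{i}+m)}$, while the surviving scalar factors combine into $Ct^{\nu-\nu k}(\phi_{i}(|x_{i}|^{-2}))^{\nu}$ (using $\phi_{i}(\rho^{2})\le t^{-1}$ where $\phi_{i}$ is small, the strong exponential decay where it is large, and $\phi_{i}(s\rho^{2})\le s\,\phi_{i}(\rho^{2})$ for $s\ge1$ from \eqref{phiratio}). For the pure-derivative case $k=0$ one may alternatively argue from the subordination formula \eqref{eqn 7.20.2}, which gives $|D^{\sigma}_{x_{i}}p_{i}(t,x_{i})|\le C|x_{i}|^{-d_{i}-m}\,\mathbb{E}[\mathrm{e}^{-c|x_{i}|^{2}/S^{i}_{t}}]\le C\,t\,\phi_{i}(|x_{i}|^{-2})\,|x_{i}|^{-d_{i}-m}$ using $\mathbb{P}(S^{i}_{t}\ge r)\le Ct\phi_{i}(1/r)$ and a dyadic slicing of $S^{i}_{t}$. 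Combining the two bounds gives \eqref{eqn 02.09.16:49}, and \eqref{pestimate ineq} is the case $\nu=\tfrac12$ with $k$ replaced by $2k$. Finally, part $(ii)$ follows by integrating the pointwise bound of $(i)$ (with $\nu=\tfrac12$, $m=0$, and $k$ replaced by $2k$) over $x_{i}\in\mathbb{R}^{d_{i}}$ and invoking Lemma \ref{gammaversionj}$(ii)$ with $\nu=\tfrac12$, which gives $\int_{\mathbb{R}^{d_{i}}}|\phi_{i}(\Delta_{x_{i}})^{k}p_{i}(t,x_{i})|\,\mathrm{d}x_{i}\le C_{i}t^{-k}$.

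I expect the main obstacle to be the second bound in $(i)$: extracting the exact polynomial decay $|x_{i}|^{-(d_{i}+m)}$ — an integer power which is in general not even — together with the \emph{fractional} weight $(\phi_{i}(|x_{i}|^{-2}))^{\nu}$ forces the dyadic integration-by-parts bookkeeping rather than a single integration by parts, and one must verify carefully that $\mathrm{e}^{-t\phi_{i}}$ absorbs all powers of $t\phi_{i}$ uniformly across shells while keeping track of the correct powers of $t$. The remaining pieces — the first bound and part $(ii)$ — are routine once Lemma \ref{gammaversionj} is available.
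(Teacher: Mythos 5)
Your approach is correct in outline but genuinely different from the paper's. The paper does not touch the Fourier integral directly: it (a) cites \cite[Lemma A.3]{kim2022nonlocal} for the near-diagonal bound and \cite[Lemma A.2(ii)]{kim2022nonlocal} (essentially your subordination argument) for the $k=0$ off-diagonal bound, and (b) establishes $k\ge1$ by induction in $k$, writing $\phi_i(\Delta_{x_i})^{\nu k}D^m_{x_i} p_i=\phi_i^{\nu}(\Delta_{x_i})\bigl[\phi_i(\Delta_{x_i})^{\nu(k-1)}D^m_{x_i} p_i\bigr]$, representing $\phi_i^{\nu}(\Delta_{x_i})$ as an integral against the jump kernel $\bar{j}_{d_i}$ of the Bernstein function $\phi_i^{\nu}$ via \eqref{eqn 02.02.13:19}, and splitting the resulting integral at scale $|x_i|/2$: the near-field piece is handled by a second-order Taylor expansion, the far-field by \eqref{jgamma ineq}, an integration by parts in $y_i$, and the $L^1$-bound \eqref{eqn 02.02.18:47}. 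Your oscillatory-integral argument (Mikhlin-type symbol bounds for $(i\xi_i)^\sigma(\phi_i(|\xi_i|^2))^{\nu k}e^{-t\phi_i(|\xi_i|^2)}$ plus a dyadic decomposition in $|\xi_i||x_i|$ with repeated integration by parts on the high-frequency shells) treats all $k$ simultaneously, avoids both the induction and the jump-kernel machinery, and isolates the same two analytic inputs, namely \eqref{phiratio} and $\lambda^n|\phi_i^{(n)}(\lambda)|\le C\phi_i(\lambda)$, at the cost of the dyadic bookkeeping you flag. One small misattribution: your appeal to Lemma \ref{gammaversionj}(i) for the near-diagonal tail integral is loose, since that lemma contains no exponential factor; what actually makes the tail converge is the super-exponential decay of $e^{-t\phi_i(\rho^2)}$ for $\rho>(\phi_i^{-1}(t^{-1}))^{1/2}$, a separate consequence of \eqref{e:H}.

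One genuine wrinkle is the base case $k=0$. A single integration by parts on the low-frequency ball $\{|\xi_i|<|x_i|^{-1}\}$ does \emph{not} produce the required factor $(t\phi_i(|x_i|^{-2}))^{\nu}$: the $\xi_i$-derivative can land on $(i\xi_i)^{\sigma}$ rather than on $e^{-t\phi_i(|\xi_i|^2)}$, and the boundary term on $\{|\xi_i|=|x_i|^{-1}\}$ is of size $|x_i|^{-(d_i+m)}e^{-t\phi_i(|x_i|^{-2})}$, which in the relevant regime $t\phi_i(|x_i|^{-2})\le1$ strictly exceeds the target $t^{\nu}(\phi_i(|x_i|^{-2}))^{\nu}|x_i|^{-(d_i+m)}$. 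Your alternative via the subordination formula \eqref{eqn 7.20.2} and the tail bound on $S_t^i$ is the correct fix (and it even yields the stronger linear factor $t\phi_i(|x_i|^{-2})$); this is what \cite[Lemma A.2]{kim2022nonlocal}, cited by the paper, does. With that route adopted as the base, the rest of your sketch is sound: the reduction to $t\phi_i(|x_i|^{-2})\le1$, the conversion $(\phi_i(|x_i|^{-2}))^{\nu k}\le t^{\nu-\nu k}(\phi_i(|x_i|^{-2}))^{\nu}$ for $k\ge1$, the specialization $\nu=1/2$ giving \eqref{pestimate ineq}, and part (ii) via Lemma \ref{gammaversionj}(ii).
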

\begin{proof}
If we have \eqref{eqn 02.09.16:49}, then by plugging $\nu=1/2$ and $k=2k$ in \eqref{eqn 02.09.16:49}, we obtain \eqref{pestimate ineq}. 
Similarly we also obtain \eqref{eqn 01.06.17:16} due to \eqref{eqn 02.02.18:47}. Hence, we only focus on proving \eqref{eqn 02.09.16:49}. By following \cite[Lemma A.3]{kim2022nonlocal} with $\phi^{\nu k}$ in place of $\phi$ therein, we have the estimation 
$$
|\phi_{i}(\Delta_{x_{i}})^{\nu k}D^{m}_{x_{i}}p_{i}(t,x_{i})| \leq C_{i}  t^{-\nu  k}\left(\phi_{i}^{-1}(t^{-1})\right)^{\frac{d_{i}+m}{2}}.
$$
Therefore, to finish the proof, we may assume
$$
t^{- \nu k}(\phi^{-1}_{i}(t^{-1}))^{\frac{d_{i}+m}{2}} \geq t^{\nu -\nu k} \frac{(\phi_{i}(|x_{i}|^{-2}))^{\nu}}{|x_{i}|^{d+m}}
$$
(equivalently, $t \phi_{i}(|x_{i}|^{-2})\leq 1$) and prove
\begin{align}\label{eqn 02.02.14:36}
|\phi_{i}(\Delta_{x_{i}})^{\nu k} D^m_{x_{i}} p_{i}(t,\cdot)(x_{i})|
\leq C_{i} t^{\nu - \nu k} \frac{(\phi_{i}(|x_{i}|^{-2}))^{\nu}}{|x_{i}|^{d+m}}.
\end{align}
We divide the proof of \eqref{eqn 02.02.14:36} into two steps.

\textbf{Step 1.} $k=0$.

When $k=0$, then \eqref{eqn 02.09.16:49} is a direct consequence of \cite[Lemma A.2 (ii)]{kim2022nonlocal}, which shows
\begin{align}\label{eqn 02.02.18:20}
|D^{m}_{x_{i}}p_{i}(t,x_{i})| &\leq C_{i} \sum_{0\leq n \leq \lfloor\frac{m}{2}\rfloor} |x_{i}|^{m-2n}\left( (\phi_{i}^{-1}(t^{-1}))^{d_{i}/2+m-n} \wedge   \frac{t\phi_{i}(|x_{i}|^{-2})}{|x_{i}|^{d_{i}+2(m-n)}} \right)    \nonumber
\\
&\leq C_{i}  \sum_{0\leq n \leq \lfloor\frac{m}{2}\rfloor} |x_{i}|^{m-2n}  \frac{t\phi_{i}(|x_{i}|^{-2})}{|x_{i}|^{d_{i}+2(m-n)}} \leq C_{i} \frac{(t\phi_{i}(|x_{i}|^{-2}))^{\nu}}{|x_{i}|^{d_{i}+m}}.
\end{align} 

For the remaining part $k\geq 1$, we use the mathematical induction argument.

\textbf{Step 2.} $k\geq1$.

Now, suppose that \eqref{eqn 02.02.14:36} (and hence \eqref{eqn 02.09.16:49}) holds for $k=k_{0}-1\geq 0$.
\\
By \cite[Lemma A.1]{kim2022nonlocal}, we see that $\phi_{i}^{\nu}$ is also a Bernstein function with L\'evy measure $\bar{\mu}$ (recall \eqref{fourier200408}), and 
\begin{align}\label{eqn 02.02.13:19}
\phi_{i}^{\nu}(\Delta_{x_{i}})f(x_{i})  &= \mathcal{F}_{d_i}[-\phi_i(|\cdot|^2)^{\nu}\mathcal{F}_{d_i}[f]]\nonumber\\
&=\int_{\mathbb{R}^{d_{i}}} \left(  f(x_{i}+y_{i}) - f(x_{i}) -\nabla_{x_{i}}f(x_{i}) \cdot y_{i}\mathbf{1}_{|y_{i}|\leq r} \right) \bar{j}_{d_{i}}(|y_{i}|) \mathrm{d}y_{i} \quad \forall \, r>0,  
\end{align}
where 
\begin{align}\label{defofjgamma}
\bar{j}_{d_{i}}(r):= \int_{0}^{\infty} (4\pi t)^{-d_{i}/2} \mathrm{e}^{-r^{2}/4t} \bar{\mu}(\mathrm{d}t),
\end{align}
and it satisfies
\begin{align}\label{jgamma ineq}
\bar{j}_{d_{i}}(r) \leq c(d_{i}) r^{-d_{i}} (\phi_{i}(r^{-2}))^{\nu} \quad \forall \, r>0.
\end{align}
For simplicity, denote $\phi_{i}(\Delta_{x_{i}})^{\nu k}D^{m}_{x_{i}}p_{i}(t,x_{i}):=P^{k}_{i,m}(t,x_{i})$ ($k\in\mathbb{N}_{0},m\in \mathbb{N}_{0}$). Then using \eqref{eqn 02.02.13:19} with $r=|x_{i}|/2$,
\begin{equation}
\label{eqn 02.02.14:44} 
\begin{aligned}
&\left|P^{k_{0}}_{i,m}(t,x_{i})\right|  \\
&\leq |P^{k_{0}-1}_{i,m}(t,x_{i})|\int_{|y_{i}|>\frac{|x_{i}|}{2}} \bar{j}_{d_{i}} (|y_{i}|)\mathrm{d}y_{i}
+ \left| \int_{|y_{i}|>\frac{|x_{i}|}{2}} P^{k_{0}-1}_{i,m}(t,x_{i}+y_{i})\bar{j}_{d_{i}} (|y_{i}|)\mathrm{d}y_{i}\right|  
\\
& \quad+ \int_{\frac{|x_{i}|}{2}>|y_{i}|} \int_0^1 \left| P^{k_{0}-1}_{i,m+1}(t,x_{i}+sy_{i})- P^{k_{0}-1}_{i,m+1}(t,x_{i})\right| |y_{i}|\bar{j}_{d_{i}}(|y_{i}|) \mathrm{d}s\mathrm{d}y_{i} 
\\
&=: |P^{k_{0}-1}_{i,m}(t,x_{i})|\times I + II_{k_{0}} + III_{k_{0}}. 
\end{aligned}
\end{equation}
By \eqref{jgamma ineq}, \eqref{int phi}, and \eqref{phiratio}, we have
\begin{align}\label{eqn 02.02.14:52}
I \leq C_{i} \int_{\frac{|x_{i}|}{2}}^{\infty} r^{-1}(\phi_{i}(r^{-2}))^{\nu} \mathrm{d}r \leq C_{i} (\phi_{i}(4|x_{i}|^{-2}))^{\nu} \leq C_{i} (\phi(|x_{i}|^{-2}))^{\nu}.
\end{align}
This together with  \eqref{eqn 02.02.18:20} yields (recall we assume $t\phi_{i}(|x_{i}|^{-2}) \leq 1$ and the assertion holds for $k=k_{0}-1$)
\begin{align*}
|P^{k_{0}-1}_{i,m}(t,x_{i})|\times I \leq C_{i} t^{\nu-\nu (k_{0}-1)} \frac{(\phi_{i}(|x_{i}|^{-2}))^{2\nu}}{|x_{i}|^{d_{i}+m}} \leq C_{i} t^{\nu- \nu k_{0}} \frac{(\phi_{i}(|x_{i}|^{-2}))^{\nu}}{|x_{i}|^{d_{i}+m}}.
\end{align*}
For $III_{k_{0}}$, by the fundamental theorem of calculus and \eqref{eqn 02.02.18:20},
\begin{align}\label{eqn 02.02.14:57}
III_{k_{0}} &\leq C(d_{i})\int_{\frac{|x_{i}|}{2}>|y_{i}|} \int_0^1 \int_0^1 \left|P^{k_{0}-1}_{i,m+2}(t,x_{i}+usy_{i})\right| |y_{i}|^2 \bar{j}_{d_{i}}(|y_{i}|) \mathrm{d}u\mathrm{d}s\mathrm{d}y_{i}  \nonumber
\\
&\leq C_{i} t^{\nu-\nu(k_{0}-1)}\int_{\frac{|x_{i}|}{2}>|y_{i}|} \int_0^1\int_0^1 \frac{(\phi(|x_{i}+usy_{i}|^{-2}))^{\nu}}{|x_{i}+usy_{i}|^{d_{i}+m+2}} |y_{i}|^2 \bar{j}_{d_{i}}(|y_{i}|) \mathrm{d}u\mathrm{d}s\mathrm{d}y_{i}  \nonumber
\\
&\leq C_{i} \frac {t^{-\nu(k_{0}-1)}(t\phi_{i}(|x_{i}|^{-2}))^{\nu}}{|x_{i}|^{d_{i}+m+2}}\int_{|x_{i}|/2>|y_{i}|} |y_{i}|^2 \bar{j}_{d_{i}}(|y_{i}|) \mathrm{d}y_{i}. 
\end{align}
For the last inequality above, we used $|x_{i}+usy_{i}|\geq |x_{i}|/2$, and \eqref{phiratio}. By \eqref{jgamma ineq}, and \eqref{phiratio} with $r=|x_{i}|^{-2}$ and $R=\rho^{-2}$, 
\begin{align}\label{eqn 02.02.15:04}
\int_{\frac{|x_{i}|}{2}>|y_{i}|} |y_{i}|^2 \bar{j}_{d_{i}}(|y_{i}|)\mathrm{d}y_{i} &\leq C_{i} \int_0^{|x_{i}|} \rho (\phi_{i}(\rho^{-2}))^{\nu} \mathrm{d}\rho \nonumber\\
&\leq  C_{i} |x_{i}|^2 (\phi_{i}(|x_{i}|^{-2}))^{\nu}.
\end{align}
Therefore, it follows that 
$$
III_{k_{0}}\leq  C_{i} \frac{t^{-\nu(k_{0}-1)}(t\phi_i(|x_{i}|^{-2}))^{\nu}}{|x_{i}|^{d_{i}+m+2}} |x_{i}|^{2}(\phi_i(|x_{i}|^{-2}))^{\nu}  \leq C_{i}  t^{\nu-\nu k_{0}} \frac{(\phi_{i}(|x_{i}|^{-2}))^{\nu}}{|x_i|^{d_i+m}}.
$$
Now we estimate $II_{k_{0}}$. By using the integration by parts $m$-times, we have
\begin{align*}
II_{k_{0}} &\leq \sum_{n=0}^{m-1} \int_{|y_{i}|=\frac{|x_{i}|}{2}} \left| \left(\frac{\mathrm{d}^{n}}{\mathrm{d}\rho^{n}} \bar{j}_{d_{i}}\right)(|y_{i}|) P^{k_{0}-1}_{i,m-1-n}(t,x_{i}+y_{i}) \right| S_{i}(\mathrm{d}y_i)\\
&\quad\quad+\int_{|y_{i}|>\frac{|x_{i}|}{2}} \left| \left(\frac{\mathrm{d}^{m}}{\mathrm{d}\rho^{m}}\bar{j}_{d_{i}}\right)(|y_{i}|)P^{k_{0}-1}_{i,0}(t,x_{i}+y_{i}) \right| \mathrm{d}y_{i}=:II'_{k_{0}} + II''_{k_{0}},
\end{align*}
where $S_{i}(\mathrm{d}y_i)$ is the surface measure defined on $\partial B_{|x_{i}|/2}$. Differentiating $\bar{j}_{d_{i}}$ via \eqref{defofjgamma}, and then using  \eqref{jgamma ineq} for $k\in\mathbb{N}_{0}$ we get
\begin{align}
\label{23.03.09.14.48}
\left| \frac{\mathrm{d}^{n}}{\mathrm{d}\rho^{n}} \bar{j}_{d_{i}}(\rho)  \right| 
\leq C_{i} \sum_{n-2l\geq 0,l\in\mathbb{N}_{0}} \rho^{n-2l}|\bar{j}_{d_{i}+2(n-l)}(\rho)|
\leq C_{i} \rho^{-d_{i}-n}(\phi_{i}(\rho^{-2}))^{\nu}.
\end{align}
Combining \eqref{23.03.09.14.48} and \eqref{eqn 02.02.18:20}, we have
\begin{align*}
II'_{k_{0}}
 &\leq C_{i}\sum_{k=0}^{m-1} \Big(  t^{\nu-\nu (k_{0}-1)} |x_{i}|^{d_{i}-1}|x|^{-d_{i}-k}  |x|^{-d_{i}-m+1+k}   (\phi_{i}(|x_{i}|^{-2}))^{2\nu} \Big)
\\
& \leq C_{i} t^{\nu-\nu k_{0}}  \frac{(\phi_{i}(|x_{i}|^{-2}))^{\nu}}{|x_{i}|^{d_{i}+m}}
    \end{align*}
when $t\phi_{i}(|x_{i}|^{-2}) \leq 1$. Also, using the fact that $\bar{j}_{d_{i}}$ is decreasing (recall \eqref{defofjgamma}), and \eqref{23.03.09.14.48}, we have
\begin{align*}
II''_{k_{0}} \leq C_{i} \frac{(\phi_{i}(|x_{i}|^{-2}))^{\nu}}{|x_{i}|^{d_{i}+m}} \int_{\mathbb{R}^{d_{i}}} |P^{k_{0}-1}_{i,0}(t,x_{i}+y_{i})| \mathrm{d}y_{i} \leq C_{i} t^{\nu-\nu k_{0}} \frac{(\phi(|x_{i}|^{-2}))^{\nu}}{|x_{i}|^{d_{i}+m}},
\end{align*}
where for the last inequality we used \eqref{eqn 02.02.18:47} for $k=k_{0}-1$. Therefore
$$
II_{k_{0}} \leq II'_{k_{0}}+II''_{k_{0}} \leq C_{i} t^{\nu -\nu k_{0}}  \frac{(\phi_{i}(|x_{i}|^{-2}))^{\nu}}{|x_{i}|^{d_{i}+m}}.
$$
Hence, we obtain \eqref{eqn 02.02.14:36} for $k=k_{0}$, and thus by the mathematical induction argument, we prove it for all $k\geq 1$. The theorem is proved.
\end{proof}

\subsection{BMO estimation of solution}
In this subsection, we establish the BMO-$L_{\infty}$ estimation of the operator $\mathcal{G}$:
\begin{equation}
\label{23.03.09.15.54}
    \|\mathcal{G} f\|_{BMO(\mathbb{R}^{d+1})}\leq C \|f\|_{L_{\infty}(\mathbb{R}^{d+1})},\quad \forall f\in L_2(\mathbb{R}^{d+1})\cap L_{\infty}(\mathbb{R}^{d+1}).
\end{equation}
We assume that Assumption \ref{23.03.03.16.04} holds and that the operator $\mathcal{G}$ can be expressed as a sum of $\ell$ terms, each of which we denote by $\mathcal{G}_i$:
\begin{align*}
    \mathcal{G} f(t,\vec{x})&=\mathbf{1}_{t>0}\sum_{i=1}^{\ell}\int_0^t\int_{\mathbb{R}^{\vec{d}}}\left(\prod_{j\neq i}p_j(t-s,x_j-y_j)\right)q_i(t-s,x_i-y)f(s,\vec{y})\mathrm{d}\vec{y}\mathrm{d}s\\
    &=:\sum_{i=1}^{\ell}\mathcal{G}_if(t,\vec{x}),
\end{align*}
where $q_i(t,x_i):=\phi(\Delta_{x_i})p_i$. To prove \eqref{23.03.09.15.54}, we need to show that
$$
\|\mathcal{G}_if\|_{BMO(\mathbb{R}^{d+1})}\leq C\|f\|_{L_{\infty}(\mathbb{R}^{d+1})},\quad i=1,\cdots,\ell.
$$
We will simplify the proofs by considering only the case $\ell=2$, since the same arguments can be applied for $\ell\geq3$. Due to the symmetry of the operators $\mathcal{G}_{1}$ and $\mathcal{G}_{2}$, we only need to consider the operator $\mathcal{G}_{1}$ in the rest of this section.

We begin by the notion of BMO spaces that will be used throughout this section. We define two increasing functions $\kappa_{1},\kappa_2:(0,\infty)\to (0,\infty)$ by
\begin{equation}\label{eqn 12.26.17:25}
\kappa_{i}(b):=\frac{1}{\sqrt{\phi^{-1}_{i}(b^{-1})}}, \quad b>0,\quad i=1,2.
\end{equation}
Here, $\phi_1$ and $\phi_2$ are Bernstein functions that satisfy Assumption \ref{23.03.03.16.04}. We also define $B^{i}_{r}(x_{i})$ ($i=1,2$) to be an open ball in $\mathbb{R}^{d_{i}}$ with radius $r>0$ and center $x_{i}$. For $(t,\vec{x})=(t,x_{1},x_{2}) \in \mathbb{R}^{d+1}$ and $b>0$, we denote
\begin{equation}\label{eqn 12.26.17:26}
Q_b(t, x_1, x_2):=(t-b,,t+b)\times B^{1}_{\kappa_{1}(b)}(x_{1})\times B^{2}_{\kappa_{2}(b)}(x_{2}),
\end{equation}
and $
Q_b=Q_b(0,0,0)$ and $B^{i}_{\kappa_{i}(b)}=B^{i}_{\kappa_{i}(b)}(0)$ for $i=1,2$.
For measurable subsets $Q\subset \mathbb{R}^{d+1}$ with finite measure and locally integrable functions $h$, we define the mean value of $h$ over $Q$ as
\begin{equation*}
h_Q:=\aint_{Q}h(s,\vec{y})\mathrm{d}\vec{y}\mathrm{d}s:=\aint_{Q}h(s,y_{1},y_{2})\mathrm{d}y_{1}\mathrm{d}y_{2}\mathrm{d}s:=\frac{1}{|Q|}\int_{Q}h(s,y_{1},y_{2})\mathrm{d}y_{1}\mathrm{d}y_{2}\mathrm{d}s,
\end{equation*}
where $|Q|$ is the Lebesgue measure of $Q$.
For a locally integrable function $h$ on $\mathbb{R}^{d+1}$, we define the BMO semi-norm of $h$ on $\mathbb{R}^{d+1}$ as
\begin{equation*}
\|h\|_{BMO(\mathbb{R}^{d+1})}:=\|h^{\#}\|_{L_{\infty}(\mathbb{R}^{d+1})}
\end{equation*}
where
\begin{align*}
h^{\#}(t,x):=\sup_{(t,x)\in Q_b(r,z)} \aint_{Q_b(r,z)}|h(s,y)-h_{Q_b(r,z)}|\mathrm{d}s\mathrm{d}y.
\end{align*}

\begin{remark}\label{rmk 02.16.17:37}
If $\ell>2$, we can naturally define $Q_{b}(t,\vec{x})$ as
$$
Q_{b}(t,\vec{x})=Q_{b}(t,x_{1},\dots,x_{\ell}) = (t-b,t+b) \times \prod_{i=1}^{\ell}B^{i}_{\kappa_{i}(b)}(x_{i}),
$$
where $\kappa_{i}(b)$ for other $i$ is defined as in \eqref{eqn 12.26.17:25}. Then, by following the rest of this section, we can establish the $L_{q}(L_{p})$-boundedness of $\mathcal{G}$ when $\ell>2$.
\end{remark}
Here is the main result of this subsection.
\begin{theorem}
\label{23.02.22.17.33}
For any $f\in L_2(\mathbb{R}^{d+1})\cap L_\infty(\mathbb{R}^{d+1})$,
\begin{align} \label{bmoestimate}
\|\mathcal{G}_1 f\|_{BMO(\mathbb{R}^{d+1})} \leq C(d,c_{0}, \delta_{0}) \|f\|_{L_\infty(\mathbb{R}^{d+1})}.
\end{align}
\end{theorem}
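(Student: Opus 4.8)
The plan is to prove the pointwise mean-oscillation bound
\[
\aint_{Q}\big|\mathcal{G}_{1}f-(\mathcal{G}_{1}f)_{Q}\big|\,\mathrm{d}s\,\mathrm{d}\vec y\le C\|f\|_{L_{\infty}(\mathbb{R}^{d+1})}
\]
for an \emph{arbitrary} parabolic cylinder $Q=Q_{b}(r,z)$, with $C$ independent of $Q$ and $f$; taking the supremum over $Q$ then gives \eqref{bmoestimate}. I treat $\ell=2$. By the convolution structure, $\mathcal{G}_{1}f(t,\vec x)=\int_{\mathbb{R}^{d+1}}K(t-s,x_{1}-y_{1},x_{2}-y_{2})\,\mathbf{1}_{0<s<t}\,f(s,\vec y)\,\mathrm{d}\vec y\,\mathrm{d}s$ with kernel $K(\tau,z_{1},z_{2})=q_{1}(\tau,z_{1})\,p_{2}(\tau,z_{2})$, $q_{1}=\phi_{1}(\Delta_{x_{1}})p_{1}$. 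Three ingredients drive the argument: (a) the two-sided heat-kernel bounds of Theorem \ref{pestimate} together with the integral estimates \eqref{int phi}, \eqref{eqn 02.02.18:47}, \eqref{eqn 01.06.17:16} of Lemma \ref{gammaversionj}; (b) the identity $\partial_{t}p_{i}=\phi_{i}(\Delta_{x_{i}})p_{i}$ (immediate from \eqref{eqn 7.20.1}), which turns every $t$-derivative of $K$ into operators of the type $\phi_{i}(\Delta_{x_{i}})^{k}D^{m}_{x_{i}}p_{i}$ already controlled by Theorem \ref{pestimate}; and (c) the scaling identity $\phi_{i}(\kappa_{i}(b)^{-2})=b^{-1}$ together with \eqref{phiratio}, which yields the doubling property $|Q_{\mu b}(r,z)|\le C(\mu)|Q_{b}(r,z)|$ for every fixed $\mu>1$.

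First I would fix $\mu>1$ large, set $Q^{\ast}:=(r-\mu b,r+b)\times B^{1}_{\sqrt{\mu}\kappa_{1}(b)}(z_{1})\times B^{2}_{\sqrt{\mu}\kappa_{2}(b)}(z_{2})$ (extended only backward in time, since $K$ is supported in $\{s<t\}$), and split $f=f_{1}+f_{2}$ with $f_{1}:=f\mathbf{1}_{Q^{\ast}}$. For the local part $f_{1}$, the computation in the proof of Lemma \ref{22estimate} applies verbatim to each $\mathcal{G}_{i}$ (its Fourier multiplier has modulus $\le1$), so $\mathcal{G}_{1}$ is bounded on $L_{2}(\mathbb{R}^{d+1})$; hence
\[
\aint_{Q}\big|\mathcal{G}_{1}f_{1}-(\mathcal{G}_{1}f_{1})_{Q}\big|\le 2\Big(\aint_{Q}|\mathcal{G}_{1}f_{1}|^{2}\Big)^{1/2}\le\frac{2\|f_{1}\|_{L_{2}}}{|Q|^{1/2}}\le 2\Big(\frac{|Q^{\ast}|}{|Q|}\Big)^{1/2}\|f\|_{L_{\infty}}\le C\|f\|_{L_{\infty}},
\]
the last inequality by the doubling property.

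For the far part I would use $\aint_{Q}|\mathcal{G}_{1}f_{2}-(\mathcal{G}_{1}f_{2})_{Q}|\le\aint_{Q}\aint_{Q}|\mathcal{G}_{1}f_{2}(t,\vec x)-\mathcal{G}_{1}f_{2}(t',\vec x')|$ and reduce matters to the off-diagonal estimate
\[
\sup_{(t,\vec x),(t',\vec x')\in Q}\ \int_{(Q^{\ast})^{c}}\Big|K(t-s,\vec x-\vec y)\mathbf{1}_{0<s<t}-K(t'-s,\vec x'-\vec y)\mathbf{1}_{0<s<t'}\Big|\,\mathrm{d}\vec y\,\mathrm{d}s\le C.
\]
Telescoping the difference successively in $x_{1}$, in $x_{2}$, and in $t$, and applying the mean value theorem, one bounds the smooth part of the integrand by $2\kappa_{1}(b)\,|D_{x_{1}}q_{1}|\,p_{2}+2\kappa_{2}(b)\,|q_{1}|\,|D_{x_{2}}p_{2}|+2b\big(|\phi_{1}(\Delta_{x_{1}})^{2}p_{1}|\,p_{2}+|q_{1}|\,|\phi_{2}(\Delta_{x_{2}})p_{2}|\big)$, evaluated along the relevant segments. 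I would then dissect $(Q^{\ast})^{c}$ into (i) $\{s<r-\mu b\}$, (ii) $\{\,|s-r|<\mu b,\ |x_{1}-y_{1}|\gtrsim\kappa_{1}(b)\,\}$, and (iii) $\{\,|s-r|<\mu b,\ |x_{2}-y_{2}|\gtrsim\kappa_{2}(b)\,\}$, and on each piece estimate each of the three terms by choosing the correct (min-)branch of the bounds in Theorem \ref{pestimate} and invoking \eqref{int phi}, \eqref{eqn 02.02.18:47}, \eqref{eqn 01.06.17:16} together with $\phi_{i}(\kappa_{i}(b)^{-2})=b^{-1}$ and the concavity bound $\phi_{i}^{-1}(u)\le Cu$; every contribution then comes out as an absolute constant (depending only on $\mu$). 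Lastly, the jump $\mathbf{1}_{0<s<t}-\mathbf{1}_{0<s<t'}$ is nonzero only for $s$ between $t$ and $t'$, a set of length $\le2b$ which lies in the time-slab of $Q^{\ast}$ and therefore has $\vec y$ spatially far; there one bounds $|K(t-s,\cdot)|+|K(t'-s,\cdot)|$ directly, combining the off-diagonal decay of $q_{1}$ (resp.\ of $p_{2}$) with $\int_{\mathbb{R}^{d_{1}}}|q_{1}(\tau,\cdot)|\le C\tau^{-1}$, $\int_{\mathbb{R}^{d_{2}}}p_{2}(\tau,\cdot)=1$, and picking the exponent $\nu\in(0,1)$ in \eqref{int phi}--\eqref{eqn 02.02.18:47} so that $\int(t-s)^{\nu-1}\,\mathrm{d}s$ over a length-$2b$ interval converges.

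The hard part is the off-diagonal bookkeeping of the previous paragraph: because the two blocks carry genuinely different scalings ($\kappa_{1}\neq\kappa_{2}$, $\phi_{1}\neq\phi_{2}$), the set $(Q^{\ast})^{c}$ must be split anisotropically, and in each subregion one has to pick the right branch of the two-sided kernel bounds \emph{and} the right exponent $\nu$; integrability of the $\tau$-integral near the diagonal $s=t$ and convergence of the spatial integral over the far region pull in opposite directions, and it is exactly the homogeneity of the bounds in Theorem \ref{pestimate} together with $\phi_{i}(\kappa_{i}(b)^{-2})=b^{-1}$ that makes the two requirements compatible and renders every estimate scale-invariant. The remaining steps — the doubling of $Q_{b}$ via \eqref{phiratio} and the (harmless but fiddly) treatment of the temporal jump of $K$ — require only routine care.
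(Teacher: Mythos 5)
The overall structure---$L_{2}$-boundedness for the local piece, kernel-regularity estimates for the far piece---matches the paper's strategy, but your handling of the far piece has a genuine gap. You telescope $K(t-s,\vec x-\vec y)-K(t'-s,\vec x'-\vec y)$ in $x_{1}$, $x_{2}$, and $t$ and apply the mean value theorem, producing the three terms $\kappa_{1}(b)|D_{x_{1}}q_{1}|\,p_{2}$, $\kappa_{2}(b)|q_{1}|\,|D_{x_{2}}p_{2}|$, and $b\big(|\phi_{1}(\Delta_{x_{1}})^{2}p_{1}|\,p_{2}+|q_{1}|\,|\phi_{2}(\Delta_{x_{2}})p_{2}|\big)$, and you then integrate each term over each of the three regions (i)--(iii). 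This works on region (i), where $\tau=t-s\gtrsim b$, but on regions (ii) and (iii)---$\tau$ small and $\vec y$ far in only one spatial block---several of the MVT terms have divergent $\tau$-integrals near $\tau=0$. For the $t$-difference term on region (ii): Theorem \ref{pestimate} with $\nu k=2$, $m=0$ gives $|\phi_{1}(\Delta_{x_{1}})^{2}p_{1}(\tau,w_{1})|\lesssim\tau^{\nu-2}\phi_{1}(|w_{1}|^{-2})^{\nu}|w_{1}|^{-d_{1}}$, and integrating over $|w_{1}|\gtrsim\kappa_{1}(b)$ via \eqref{int phi} yields $C\tau^{\nu-2}b^{-\nu}$; then $b\int_{0}^{Cb}\tau^{\nu-2}b^{-\nu}\,\mathrm{d}\tau=\infty$ for every admissible $\nu\in(0,1)$. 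Similarly, the $x_{2}$-difference term on region (ii) is controlled by $\kappa_{2}(b)\,b^{-1/2}\int_{0}^{Cb}\tau^{-1/2}\big(\phi_{2}^{-1}(\tau^{-1})\big)^{1/2}\mathrm{d}\tau$, and by \eqref{phiratio} one has $\phi_{2}^{-1}(\tau^{-1})\geq(b/\tau)\phi_{2}^{-1}(b^{-1})$ for $\tau<b$, so the integrand is $\gtrsim\kappa_{2}(b)^{-1}\tau^{-1}$ and the integral again diverges. The mean value theorem is simply too lossy near the diagonal: differentiating the kernel in $t$, or in the spatial block that is \emph{not} off-diagonal, costs extra negative powers of $\tau$ that the short ($\sim b$) integration interval cannot absorb.

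What the paper's Lemma \ref{outwholeestimate} does, and what you need, is to refrain from differentiating on the near-in-time part. The paper first cuts $f$ by a time cutoff $\eta$ only; the near-in-time piece $f_{1}$ is estimated by bounding $|\mathcal{G}_{1}f_{1}(t,\vec x)|$ itself: Step 1-1 uses the $L_{2}$ estimate for the spatially local part, and Step 1-2 uses the \emph{triangle inequality} (not MVT) on $K$ for the spatially far part, picking the $\nu=1/2$ off-diagonal branch of Theorem \ref{pestimate} in exactly one block to produce the factor $\tau^{-1/2}b^{-1/2}$, which is integrable over $|\tau|\lesssim b$. Only the far-in-time piece $f_{2}=f\eta$ is then differentiated: in $t$ (Step 2), and then, after a further spatial split ($f_{3}$, $f_{4}$), by MVT in $x$ for $f_{3}$ (Step 4) and by a direct bound for $f_{4}$ (Step 3)---all with $\tau\gtrsim b$ so no singularity arises. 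In short, the telescoping/MVT must be restricted to the time-far region; the time-near, space-far region must be treated by direct kernel bounds. If on your regions (ii) and (iii) you replace the MVT estimate by $|K(t-s,\cdot)|+|K(t'-s,\cdot)|$ and argue exactly as in the paper's Step 1-2, your scheme closes.
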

\begin{proof}
The proof is quite standard if we have the following Lemma;
\begin{lemma}
\label{outwholeestimate}
Let $f\in C_c^\infty(\mathbb{R}^{d+1})$ and $b>0$. Then,
\begin{align*}
\aint_{Q_b}\aint_{Q_b}|\mathcal{G}_{1} f (t,\vec{x})-\mathcal{G}_{1} f(s,\vec{y})|\mathrm{d}t \mathrm{d}\vec{x} \mathrm{d}s \mathrm{d}\vec{y} \leq C \|f\|_{L_\infty(\mathbb{R}^{d+1})},
\end{align*}
where $C$ depends only on $d, c_{0}$ and $\delta_{0}$.
\end{lemma}
Note that for any $(t_0,\vec{x}_0)\in\mathbb{R}^{d+1}$, by change of variables, we see that
\begin{eqnarray*}
\aint_{Q_b(t_0,\vec{x}_0)} |\mathcal{G}_1 f(t,\vec{x})-(\mathcal{G}_1 f)_{Q_b(t_0,\vec{x}_0)}| \mathrm{d}t \mathrm{d}\vec{x}
= \aint_{Q_b} |\mathcal{G}_1 \tilde{f}(t,\vec{x})-(\mathcal{G}_1 \tilde{f})_{Q_b}| \mathrm{d}t \mathrm{d}\vec{x},
\end{eqnarray*}
where $\tilde{f}(t,\vec{x}):=f(t+t_0, \vec{x}+\vec{x}_0)$. Using this and the invariance of  $L_{\infty}(\mathbb{R}^{d+1})$-norm under translation, we see that it  is enough to prove 
\begin{equation} \label{meanaverageineq}
\aint_{Q_b} |\mathcal{G}_1 f(t,\vec{x})-(\mathcal{G}_1 f)_{Q_b}| \mathrm{d}t \mathrm{d}\vec{x} \leq C \|f\|_{L_\infty(\mathbb{R}^{d+1})}, \quad b>0.
\end{equation}
Recall that we already have \eqref{meanaverageineq} due to Lemma \ref{outwholeestimate} if $f\in C_c^\infty (\mathbb{R}^{d+1})$. 

Now we consider the general case, \textit{i.e.}, $f\in L_2(\mathbb{R}^{d+1})\cap L_\infty(\mathbb{R}^{d+1})$.
We choose a sequence of functions $f_n\in C_c^\infty(\mathbb{R}^{d+1})$ such that $\mathcal{G}_1 f_n \to \mathcal{G}_1 f $ for almost, and $\|f_n\|_{L_\infty(\mathbb{R}^{d+1})}\leq \|f\|_{L_\infty(\mathbb{R}^{d+1})}$.
Then by Fatou's lemma,
\begin{align*}
\aint_{Q_b} |\mathcal{G}_1 f(t,\vec{x})-(\mathcal{G}_1 f)_{Q_b}| \mathrm{d}t \mathrm{d}\vec{x}& \leq \aint_{Q_b} \aint_{Q_b} |\mathcal{G}_1 f(t,\vec{x})-\mathcal{G}_1 f(s,\vec{y})| \mathrm{d}t \mathrm{d}\vec{x} \mathrm{d}s \mathrm{d}\vec{y}
\\
& \leq  \liminf_{n\to \infty} \aint_{Q_b} \aint_{Q_b} |\mathcal{G}_1 f_n(t,\vec{x})-\mathcal{G}_1 f_n(s,\vec{y})| \mathrm{d}t \mathrm{d}\vec{x} \mathrm{d}s \mathrm{d}\vec{y}
\\
& \leq C  \liminf_{n\to \infty} \|f_n\|_{L_\infty(\mathbb{R}^{d+1})}
\leq C \|f\|_{L_\infty(\mathbb{R}^{d+1})}.
\end{align*}
The theorem is proved.
\end{proof}

Now we prove Lemma \ref{outwholeestimate}.
\begin{proof}[Proof of Lemma \ref{outwholeestimate}]
Take functions $\eta=\eta(t) \in C^\infty(\mathbb{R})$ and $\zeta \in C_c^\infty(\mathbb{R}^{d})$ satisfying
\begin{itemize}
    \item $0\leq \eta\leq 1$, $\eta=1$ on $(-\infty, -8b/3)$ and $\eta(t)=0$ for $t\geq -7b/3$.
    \item $0\leq \zeta\leq 1$, $\zeta=1$ on $B^{1}_{7\kappa_{1}(b)/3}\times B^{2}_{7\kappa_{2}(b)/3}$ and $\zeta=0$ outside of $B^{1}_{8\kappa_{1}(b)/3}\times B^{2}_{8\kappa_{2}(b)/3}$.
\end{itemize}
Then for any $(t,\vec{x}),(s,\vec{y})\in\mathbb{R}\times\mathbb{R}^{\vec{d}}$,
\begin{equation*}
\begin{aligned}
|\mathcal{G}_{1}f(t,\vec{x})-\mathcal{G}_{1}f(s,\vec{y})|&\leq |\mathcal{G}_{1}f_1(t,\vec{x})-\mathcal{G}_{1}f_1(s,\vec{y})|+|\mathcal{G}_{1}f_2(t,\vec{x})-\mathcal{G}_{1}f_2(s,\vec{x})|\\
&\quad+ |\mathcal{G}_{1}f_3(s,\vec{x}) - \mathcal{G}_{1}f_3(s,\vec{y})| + |\mathcal{G}_{1}f_4(s,\vec{x}) - \mathcal{G}_{1}f_4(s,\vec{y})| 
\\
&=:I_1(t,s,\vec{x},\vec{y})+I_2(t,s,\vec{x},\vec{y})+I_3(t,s,\vec{x},\vec{y})+I_4(t,s,\vec{x},\vec{y}),
\end{aligned}
\end{equation*}
where
\begin{itemize}
    \item $f_1:=f(1-\eta)$; $f_1$ is supported in $(-3b,\infty)\times\mathbb{R}^{\vec{d}}$.
    \item $f_2:=f\eta$; $f_2$ is supported in $(-\infty,-2b)\times\mathbb{R}^{\vec{d}}$.
    \item $f_3:=f\eta(1-\zeta)$; $f_3$ is supported in $(-\infty,-2b)\times(B^{1}_{2\kappa_{1}(b)} \times B^{2}_{2\kappa_{2}(b)})^{c}$.
    \item $f_4:=f\eta\zeta$; $f_4$ is supported in
    $(-\infty,-2b)\times B^{1}_{2\kappa_{1}(b)} \times B^{2}_{2\kappa_{2}(b)}$.
\end{itemize}
Therefore, by proving
$$
\aint_{Q_b}\aint_{Q_b}(I_1+I_2+I_3+I_4)(t,s,\vec{x},\vec{y})\mathrm{d}t \mathrm{d}\vec{x} \mathrm{d}s \mathrm{d}\vec{y}\leq C\|f\|_{L_{\infty}(\mathbb{R}^{d+1})},
$$
we will achieve Lemma \ref{outwholeestimate}. 

\textbf{Step 1.} In Step 1, we prove
\begin{align*}
\aint_{Q_b}\aint_{Q_b}I_1(t,s,\vec{x},\vec{y}) \mathrm{d}\vec{x} \mathrm{d}t \vec{y} \mathrm{d}s&:=\aint_{Q_b}\aint_{Q_b}|\mathcal{G}_{1} f_1 (t,\vec{x})-\mathcal{G}_{1} f_1 (s,\vec{y})| \mathrm{d}\vec{x} \mathrm{d}t \vec{y} \mathrm{d}s\\
&\leq C \|f\|_{L_\infty(\mathbb{R}^{d+1})}.
\end{align*}
Recall that $f_1$ is supported in $(-3b,\infty)\times\mathbb{R}^{\vec{d}}$. Since
$$
\aint_{Q_b}\aint_{Q_b}|\mathcal{G}_{1} f_1 (t,\vec{x})-\mathcal{G}_{1} f_1 (s,\vec{y})| \mathrm{d}\vec{x} \mathrm{d}t \vec{y} \mathrm{d}s\leq 2\aint_{Q_b}|\mathcal{G}_1f_1(t,\vec{x})|\mathrm{d}\vec{x} \mathrm{d}t,
$$
it suffices to prove
\begin{align}
\label{23.03.10.13.13}
    \aint_{Q_b}|\mathcal{G}_1f_1(t,\vec{x})|\mathrm{d}\vec{x} \mathrm{d}t\leq C\|f\|_{L_{\infty}(\mathbb{R}^{d+1})}.
\end{align}
We divide the proof of \eqref{23.03.10.13.13} into two steps.

\textbf{Step 1-1.} The support of $f_1$ is contained in $(-3b,3b)\times B_{\kappa_1(3b)}\times B_{\kappa_2(3b)}$.

By the assumption and \eqref{phiratio}, $\|f_1\|_{L_2(\mathbb{R}^{d+1})}\leq C(d) |Q_{b}|^{1/2}\|f\|_{L_{\infty}(\mathbb{R}^{d+1})}$.
Thus,  by H\"older's inequality and Lemma \ref{22estimate},
\begin{align*}
\aint_{Q_b}|\mathcal{G}_1 f_1 (t,\vec{x})|\mathrm{d}\vec{x}\mathrm{d}t \leq & \left(\int_{Q_b}|\mathcal{G}_1 f_1 (t,\vec{x})|^2 \mathrm{d}\vec{x} \mathrm{d}t\right)^{1/2}|Q_b|^{-1/2}
\leq  C \|f\|_{L_{\infty}(\mathbb{R}^{d+1})}.
\end{align*}

\textbf{Step 1-2.} General case.

Take $\zeta_0=\zeta_0(t)\in C^{\infty}(\mathbb{R})$ such that $0\leq \zeta_0\leq 1$, $\zeta_0(t)=1$ for $t\leq 2b$, and $\zeta_0(t)=0$ for $t\geq 5b/2$.
Note that $\mathcal{G}_1 f=\mathcal{G}_1 (f\zeta_0)$ on $Q_b$ and $|f\zeta_0|\leq |f|$. This implies that to prove the lemma it is enough to assume $f(t,\vec{x})=0$ if $|t|\geq 3b$.

Set $f_{11}=\zeta f_{1}$ and $f_{12}=(1-\zeta)f_{1}$. Then $\mathcal{G}_1 f_{1} = \mathcal{G}_1 f_{11} + \mathcal{G}_1 f_{12}$.
Since $\mathcal{G}_1 f_{11}$ can be estimated by Step 1, to prove the lemma, we may further assume that $f(t,\vec{y})=0$ if $y\in B^{1}_{2\kappa_{1}(b)}\times B^{2}_{2\kappa_{2}(b)}$.  Therefore, for any $\vec{x}\in B^{1}_{\kappa_{1}(b)}\times B^{2}_{\kappa_{2}(b)}$,
\begin{align*}
&\int_{\mathbb{R}^{\vec{d}}} \left|\phi_{1}(\Delta_{x_{1}})p (t-s,\vec{x}-\vec{y}) f(s,\vec{y})\right| \mathrm{d}\vec{y}\\
&= \int_{(B^{1}_{2\kappa_{1}(b)}\times B^{2}_{2\kappa_{2}(b)})^{c}} |q_{1}(t-s,x_{1}-y_{1})p_{2}(t-s,x_{2}-y_{2}) f(s,\vec{y})| \mathrm{d}\vec{y} 
\\
&:= I_{1,1} + I_{1,2},
\end{align*}
where
\begin{gather*}
I_{1,1} = \int_{(B^{1}_{2\kappa_{1}(b)})^{c}\times \mathbb{R}^{d_2}} |q_{1}(t-s,x_{1}-y_{1})p_{2}(t-s,x_{2}-y_{2}) f(s,\vec{y})| \mathrm{d}\vec{y},
\\
I_{1,2} = \int_{(B^{1}_{2\kappa_{1}(b)}) \times (B^{2}_{2\kappa_{2}(b)})^{c}} |q_{1}(t-s,x_{1}-y_{1})p_{2}(t-s,x_{2}-y_{2}) f(s,\vec{y})| \mathrm{d}\vec{y}.
\end{gather*}
By \eqref{pestimate ineq} and \eqref{int phi},
\begin{align*}
I_{1,1} &\leq \|f\|_{L_\infty(\mathbb{R}^{d+1})} {\bf1}_{|s|\leq 3b} \int_{|y_{1}|\geq \kappa_{1}(b)} |q_{1} (t-s,y_{1})| \mathrm{d}y_{1}
\\
&\leq C\|f\|_{L_\infty(\mathbb{R}^{d+1})}{\bf1}_{|s|\leq 3b} (t-s)^{-1/2} \int_{(\phi_{1}^{-1}(b^{-1}))^{-1/2}}^{\infty} \frac{(\phi_{1}(\rho^{-2}))^{1/2}}{\rho^{d_{1}}}\rho^{d_{1}-1} \mathrm{d}\rho
\\
&\leq C\|f\|_{L_\infty(\mathbb{R}^{d+1})} {\bf1}_{|s|\leq 3b} (t-s)^{-1/2}  b^{-1/2}.
\end{align*}
Also, for $I_{1,2}$, again using \eqref{pestimate ineq}, \eqref{eqn 01.06.17:16} and \eqref{int phi}, we can check that 
\begin{align*}
I_{1,2} &\leq C\|f\|_{L_\infty(\mathbb{R}^{d+1})}{\bf1}_{|s|\leq 3b} (t-s)^{-1} \int_{(\phi_{2}^{-1}(b^{-1}))^{-1/2}}^{\infty} (t-s)^{1/2} \frac{(\phi_{2}(\rho^{-2}))^{1/2}}{\rho^{d_{2}}}\rho^{d_{2}-1} \mathrm{d}\rho
\\
&\leq C\|f\|_{L_\infty(\mathbb{R}^{d+1})}{\bf1}_{|s|\leq 3b} (t-s)^{-1/2} \int_{(\phi_{2}^{-1}(b^{-1}))^{-1/2}}^{\infty}  \frac{(\phi_{2}(\rho^{-2}))^{1/2}}{\rho^{d_{2}}}\rho^{d_{2}-1} \mathrm{d}\rho
\\
&\leq C\|f\|_{L_\infty(\mathbb{R}^{d+1})} {\bf1}_{|s|\leq 3b} (t-s)^{-1/2} b^{-1/2}.
\end{align*}
Note that if $|t| \leq b$ and $|s|\leq 3b$ then $|t-s|\leq 4b$.  Hence, it follows that for any $(t,\vec{x})\in Q_{b}$,  
\begin{align*}
|\mathcal{G}_{1} f(t,\vec{x})| & \leq \int_{-\infty}^{t} \left( I_{1,1}+I_{1,2}\right) \mathrm{d}s  \leq C\|f\|_{L_\infty(\mathbb{R}^{d+1})} b^{-1/2} \int_{|t-s|\leq 4b} |t-s|^{-1/2} \mathrm{d}s\\
&\leq C \|f\|_{L_{\infty}(\mathbb{R}^{d+1})}.
\end{align*}
This certainly implies the desired estimate \eqref{23.03.10.13.13}.

\textbf{Step 2.} In Step 2, we prove the estimation of $I_2$;
\begin{align*}
\aint_{Q_b}\aint_{Q_b}|\mathcal{G}_{1}f_2(t,\vec{x})-\mathcal{G}_{1}f_2(s,\vec{x})|\mathrm{d}\vec{x} \mathrm{d}t \vec{y} \mathrm{d}s\leq C \|f\|_{L_\infty(\mathbb{R}^{d+1})}.
\end{align*}
Recall that $f_2$ is supported in $(-\infty,-2b)\times\mathbb{R}^{\vec{d}}$. It suffices to prove 
\begin{align}
\label{23.03.10.01.27}
|\mathcal{G}_{1} f_2 (t_1,\vec{x})-\mathcal{G}_{1} f_2(t_2,\vec{x})|\leq C \|f\|_{L_\infty(\mathbb{R}^{d+1})},\quad \forall (t_1,\vec{x}), (t_2,\vec{x})\in Q_b.
\end{align}
Without loss of generality,  we assume $t_{1}>t_{2}$. Then, since $f_2(s,\vec{x})=0$ for $s\geq -2b$ and $t_1, t_2\geq -b$, it follows that
\begin{align*}
&|\mathcal{G}_{1} f_2(t_1,\vec{x})-\mathcal{G}_{1} f_2(t_2,\vec{x})|= \Big|\int_{-\infty}^{-2b}  \int_{\mathbb{R}^d} A(t_{1},t_{2},s,\vec{x},\vec{y})\,f(s,\vec{y}) \mathrm{d}\vec{y} \mathrm{d}s \Big|,
\end{align*}
where
\begin{equation*}
    A(t_{1},t_{2},s,\vec{x},\vec{y}) := q_{1}(t_{1}-s,x_{1}-y_{1})p_{2}(t_{1}-s,x_{2}-y_{2}) - q_{1}(t_{2}-s,x_{1}-y_{1})p_{2}(t_{2}-s,x_{2}-y_{2}).
\end{equation*}
By the fundamental theorem of calculus, we have
\begin{align*}
&\Big|\int_{-\infty}^{-2b}  \int_{\mathbb{R}^{\vec{d}}} A(t_{1},t_{2},s,\vec{x},\vec{y})f(s,\vec{y})  \mathrm{d}\vec{y} \mathrm{d}s \Big|\\
&\leq \Big|\int_{-\infty}^{-2b}  \int_{\mathbb{R}^{\vec{d}}}  \int_{t_2}^{t_1} \partial_{t}q_{1} (t-s,x_1-y_1) p_{2}(t_{1}-s,x_{2}-y_{2}) f(s,\vec{y})  \mathrm{d}t \mathrm{d}\vec{y} \mathrm{d}s \Big|
\\
& \quad +\Big|\int_{-\infty}^{-2b}  \int_{\mathbb{R}^{\vec{d}}}  \int_{t_2}^{t_1} q_{1} (t_{2}-s,x_1-y_1) q_{2}(t-s,x_{2}-y_{2}) f(s,\vec{y})  \mathrm{d}t \mathrm{d}\vec{y} \mathrm{d}s \Big|
\\
& =: I_{2,1}(t_{1},t_{2},\vec{x})+ I_{2,2}(t_{1},t_{2},\vec{x}).
\end{align*}
By using $\partial_{t}q_{1} = \partial_{t}\partial_{t}p_{1} = (\phi_{1}(\Delta_{x_{1}}))^{2}p_{1}$, and \eqref{eqn 01.06.17:16}, we can check that 
\begin{align*}
&I_{2,1}(t_{1},t_{2},\vec{x}) \\
&\leq \int_{-\infty}^{-2b} \left( \int_{\mathbb{R}^{d_{1}}} \int_{t_2}^{t_1} |\partial_{t}q_{1} (t-s,x_{1}-y_{1})f(s,\vec{y})| \mathrm{d}t \mathrm{d}y_{1} \int_{\mathbb{R}^{d_{2}}} p_{2}(t_{1}-s,x_{2}-y_{2})\mathrm{d}y_{2} \right) \mathrm{d}s 
\\
& \leq C \|f\|_{L_\infty(\mathbb{R}^{d+1})}  \int_{-\infty}^{-2b} 
 \int_{t_2}^{t_1} (t-s)^{-2}\mathrm{d}t \mathrm{d}s \leq C \int_{t_{2}}^{t_{1}} \|f\|_{L_{\infty}} b^{-1} \mathrm{d}s \leq C\|f\|_{L_{\infty}}.
\end{align*}
Similarly, we can check that
\begin{align*}
&I_{2,2}(t_{1},t_{2},\vec{x})\\
&\leq \int_{-\infty}^{-2b} \left( \int_{\mathbb{R}^{d_{2}}} \int_{t_2}^{t_1} |q_{2} (t-s,x_{2}-y_{2})f(s,\vec{y})| \mathrm{d}t \mathrm{d}y_{2} \int_{\mathbb{R}^{d_{1}}} |q_{1}(t_{2}-s,x_{1}-y_{1})|\mathrm{d}y_{1} \right) \mathrm{d}s 
\\
& \leq C \|f\|_{L_\infty(\mathbb{R}^{d+1})}  \int_{-\infty}^{-2b} \int_{t_2}^{t_1} (t-s)^{-1} (t_{2}-s)^{-1}  \mathrm{d}t\mathrm{d}s \\
&\leq C \|f\|_{L_\infty(\mathbb{R}^{d+1})}  \int_{-\infty}^{-2b} \int_{t_2}^{t_1} (t_{2}-s)^{-2}\mathrm{d}t  \mathrm{d}s \leq C\|f\|_{L_{\infty}}.
\end{align*}
Therefore, we have $I_{2,1}(t_{1},t_{2},\vec{x}) + I_{2,2}(t_{1},t_{2},\vec{x}) \leq C \|f\|_{L_{\infty}}$, and  \eqref{23.03.10.01.27} follows.

\textbf{Step 3.} In Step 3, we prove the estimation of $I_4$;
\begin{align*}
\aint_{Q_b}\aint_{Q_b}|\mathcal{G}_{1} f_4 (s,\vec{x})-\mathcal{G}_{1} f_4 (s,\vec{y})| \mathrm{d}\vec{x} \mathrm{d}t \vec{y} \mathrm{d}s\leq C \|f\|_{L_\infty(\mathbb{R}^{d+1})}.
\end{align*}
Recall that $f_4$ is supported in $(-\infty,-2b)\times B^{1}_{3\kappa_{1}(b)}\times B^{2}_{3\kappa_{2}(b)}$. It suffices to prove for any $(t,\vec{x})\in Q_b$,
\begin{align}
    \label{23.03.10.01.56}
|\mathcal{G}_1f_4(t,\vec{x})|\leq C\|f\|_{L_{\infty}(\mathbb{R}^{d+1})}.
\end{align}
Since  $(t,\vec{x})\in Q_b$, and $\|p_{2}(t-s,\cdot)\|_{L_{1}(\mathbb{R}^{d_{2}})}=1$,
\begin{align*}
|\mathcal{G}_1 f_4(t,\vec{x})| \leq&\int_{-\infty}^{-2b} \int_{B^{1}_{3\kappa_{1}(b)}\times B^{2}_{3\kappa_{2}(b)}} |q_{1} (t-s,x_{1}-y_{1})p_{2}(t-s,x_{2}-y_{2})f(s,\vec{y})|\mathrm{d}\vec{y}\mathrm{d}s  \nonumber
\\
\leq&  \|f\|_{L_{\infty}(\mathbb{R}^{d+1})} \int_{b}^{\infty} \int_{B^{1}_{4\kappa_1(b)}} |q_{1} (s,y_{1})|\mathrm{d}y_{1}\mathrm{d}s   = \|f\|_{L_{\infty}(\mathbb{R}^{d+1})} \left( I_{4,1} +I_{4,2} \right),
\end{align*}
where
$$
I_{4,1}=\int^{4b}_{b} \int_{B^{1}_{4\kappa_1(b)}} |q_{1} (s,y_{1})|\mathrm{d}y_{1}\mathrm{d}s, \quad  
I_{4,2}=\int_{4b}^{\infty} \int_{B^{1}_{4\kappa_1(b)}} |q_{1} (s,y_{1})|\mathrm{d}y_{1}\mathrm{d}s.
$$
Using \eqref{eqn 01.06.17:16}, we have $I_{4,1}\leq C \int^{4b}_{b} s^{-1}\mathrm{d}s =C$.
By Fubini's theorem and \eqref{pestimate ineq},
\begin{align*}
I_{4,2}& \leq C\int_{B^{1}_{4\kappa_1(b)}}\int_{4b}^{\infty} (\phi_{1}^{-1}(s^{-1}))^{d_{1}/2} s^{-1} \mathrm{d}s \mathrm{d}y_{1} \\
&\leq \int_{B^{1}_{4\kappa_1(b)}}\int_{4b}^{\infty} s^{-d_{1}/2}b^{d_{1}/2} (\kappa_{1}(4b))^{-d_{1}} s^{-1} \mathrm{d}s \mathrm{d}y_{1}\leq C\kappa_1(4b)^{-d_{1}} \kappa_1(b)^{d_{1}}.  
\end{align*}
where for the second inequality, we used  \eqref{phiratio} with $R=\phi_{1}^{-1}((4b)^{-1})$ and $r=\phi_{1}^{-1}(s^{-1})$. Then using \eqref{phiratio} again with $R=\phi^{-1}_{1}(b^{-1})$ and $r=\phi^{-1}_{1}((4b)^{-1})$, we have $I_{4,2}\leq C$, where $C$ does not depend on $b$. This certainly proves \eqref{23.03.10.01.56}.

\textbf{Step 4.}
In Step 4, we prove the estimation of $I_3$
\begin{align*}
\aint_{Q_b}\aint_{Q_b}|\mathcal{G}_{1} f_3 (s,\vec{x})-\mathcal{G}_{1} f_3 (s,\vec{x})| \mathrm{d}\vec{x} \mathrm{d}t \vec{y} \mathrm{d}s\leq C \|f\|_{L_\infty(\mathbb{R}^{d+1})}.
\end{align*}
Recall that $f_3$ is supported in $(-\infty,-2b)\times (B^{1}_{3\kappa_{1}(b)}\times B^{2}_{3\kappa_{2}(b)})^c$. It suffices to prove for any $(t,\vec{x}),(t,\vec{z})\in Q_b$, $|\mathcal{G}_1f_3(t,\vec{x})-\mathcal{G}_1f_3(t,\vec{z})|\leq C\|f\|_{L_{\infty}(\mathbb{R}^{d+1})}$.
Recall that $f_3(s,\vec{y})=0$ if $s\geq -2b$ or $\vec{y}=(y_{1},y_{2})\in B^{1}_{2\kappa_{1}(b)} \times B^{2}_{2\kappa_{2}(b)}$. Thus,  if $t>-b$,  
\begin{align*}
|\mathcal{G}_{1} f(t,\vec{x})-\mathcal{G}_{1} f(t,\vec{z})|
= \Big|\int_{-\infty}^{-2b}  \int_{(B^{1}_{2\kappa_{1}(b)}\times B^{2}_{2\kappa_{2}(b)})^{c}} \tilde{A}(t,s,\vec{x},\vec{z},\vec{y})\,\, f(s,\vec{y}) \mathrm{d}\vec{y} \mathrm{d}s \Big|,
\end{align*}
where $
\tilde{A}(t,s,\vec{x},\vec{z},\vec{y}) : = q_{1}(t-s,x_{1}-y_{1})p_{2}(t-s,x_{2}-y_{2})
- q_{1}(t-s,\bar{x}_{1}-y_{1})p_{2}(t-s,\bar{x}_{2}-y_{2})$.
We split the integral into three parts as follows:
\begin{align*}
&\Big|\int_{-\infty}^{-2b}  \int_{(B^{1}_{2\kappa_{1}(b)}\times B^{2}_{2\kappa_{2}(b)})^{c}} \tilde{A}(t,s,\vec{x},\vec{z},\vec{y}) f(s,\vec{y}) \mathrm{d}\vec{y} \mathrm{d}s \Big|\\
&\leq \Big|\int_{-\infty}^{-2b}  \int_{(B^{1}_{2\kappa_{1}(b)})^{c}\times (B^{2}_{2\kappa_{2}(b)})^{c}}\cdots  \mathrm{d}\vec{y} \mathrm{d}s \Big|+ \Big|\int_{-\infty}^{-2b}  \int_{(B^{1}_{2\kappa_{1}(b)})^{c} \times (B^{2}_{2\kappa_{2}(b)})}\cdots \, \mathrm{d}\vec{y} \mathrm{d}s \Big| \\
&\quad+ \Big|\int_{-\infty}^{-2b}  \int_{(B^{1}_{2\kappa_{1}(b)})\times (B^{2}_{2\kappa_{2}(b)})^{c}} \cdots  \, \mathrm{d}\vec{y} \mathrm{d}s \Big|
\\
&:= I_{3,1}(t,\vec{x},\vec{z})+I_{3,2}(t,\vec{x},\vec{z})+ I_{3,3}(t,\vec{x},\vec{z}).
\end{align*}
Also, by the fundamental theorem of calculus, we have
\begin{align*}
&\tilde{A}(t,s,\vec{x},\vec{z},\vec{y}) \\
& = \int_{0}^{1} (\nabla q_{1})(t-s,\theta(x_{1},z_{1},u)-y_{1})\cdot(x_{1}-z_{1}) \,\, p_{2}(t-s,x_{2}-y_{2}) \mathrm{d}u  \nonumber
\\
&\quad + \int_{0}^{1} q_{1}(t-s,z_{1}-y_{1}) \,\, (\nabla p_{2})(t-s,\theta(x_{2},z_{2},u)-y_{2})\cdot(x_{2}-z_{2})\mathrm{d}u   \nonumber
\\
&:= \tilde{A}_{1}(t,s,\vec{x},\vec{z},\vec{y}) + \tilde{A}_{2}(t,s,\vec{x},\vec{z},\vec{y}), 
\end{align*}
where $\theta(x_{i},z_{i},u)=(1-u)z_{i}+ux_{i}$ for $i=1,2$.  

\textbf{Step 4-1.} Estimation of $I_{3,1}$.

First, we consider $I_{3,1}(t,\vec{x},\vec{z})$. For any $x_1, x_2 \in B^{1}_{\kappa_{1}(b)}\times B^{2}_{\kappa_{2}(b)}$ and $t>-b$,
\begin{align}\label{eqn 12.26.11:48}
I_{3,1}(t,\vec{x},\vec{z})&\leq \Big|\int_{-\infty}^{-2b}  \int_{(B^{1}_{2\kappa_{1}(b)})^{c}\times (B^{2}_{2\kappa_{2}(b)})^{c}} \tilde{A}_{1}(t,s,\vec{x},\vec{z},\vec{y})\,\, f(s,\vec{y}) \mathrm{d}\vec{y} \mathrm{d}s \Big|\nonumber
\\
&\quad+\Big|\int_{-\infty}^{-2b}  \int_{(B^{1}_{2\kappa_{1}(b)})^{c}\times (B^{2}_{2\kappa_{2}(b)})^{c}} \tilde{A}_{2}(t,s,\vec{x},\vec{z},\vec{y})\,\, f(s,\vec{y}) \mathrm{d}\vec{y} \mathrm{d}s \Big|   \nonumber
\\
& := I_{3,1,1}(t,\vec{x},\vec{z})+ I_{3,1,2}(t,\vec{x},\vec{z}). 
\end{align} 
Since $\|p_{2}(t-s,\cdot)\|_{L_{1}(\mathbb{R}^{d_{2}})} = 1$, we have
\begin{equation}\label{eqn 8.2.1}
\begin{aligned}
&I_{3,1,1}(t,\vec{x},\vec{z})\leq \int_{-\infty}^{-2b}  \int_{(B^{1}_{2\kappa_{1}(b)})^{c}\times (B^{2}_{2\kappa_{2}(b)})^{c}} |\tilde{A}_{1}(t,s,\vec{x},\vec{z},\vec{y})\,\, f(s,\vec{y})| \mathrm{d}\vec{y} \mathrm{d}s  
\\
& \leq C \kappa_{1}(b) \|f\|_{L_\infty(\mathbb{R}^{d+1})}  \int_{-\infty}^{ -2b} \int_{|y_{1}|\geq \kappa_{1}(b)} |\nabla q_{1} (t-s,y_{1})| \mathrm{d}y_{1} \mathrm{d}s  
\\
& \leq C \kappa_{1}(b) \|f\|_{L_\infty(\mathbb{R}^{d+1})}  \int_{b}^{\infty} \int_{|y_{1}|\geq \kappa_{1}(b)} |\nabla q_{1} (s,y_{1})| \mathrm{d}y_{1} \mathrm{d}s. 
\end{aligned}
\end{equation}
By \eqref{pestimate ineq},
\begin{align*}
&\int_{b}^{\infty} \int_{|y_{1}|\geq \kappa_{1}(b)} |\nabla q_{1} (s,y_1)| \mathrm{d}y_1 \mathrm{d}s \\
& \leq C \int_{b}^\infty \left(\int_{\kappa_1(s)}^\infty  \frac{(\phi_{1}(\rho^{-2}))^{1/2}}{s^{1/2}\rho^2} \mathrm{d}\rho  +\int_{\kappa_1(b)}^{\kappa_1(s)} \rho^{d_{1}-1} \frac{(\phi_{1}^{-1}(s^{-1}))^{(d_{1}+1)/2 }}{s} \mathrm{d}\rho\right) \mathrm{d}s.
\end{align*}
We now estimate the last two integrals above.  First, by \eqref{int phi},
\begin{align}
\int_{b}^\infty \int_{\kappa_1(s)}^\infty  \frac{(\phi_{1}(\rho^{-2}))^{1/2}}{s^{1/2}\rho^2} \mathrm{d}\rho \mathrm{d}s&\leq \int_{b}^\infty s^{-1/2}\left(\phi_{1}^{-1}(s^{-1})\right)^{1/2} \int_{\kappa_1(s)}^\infty  \frac{(\phi_{1}(\rho^{-2}))^{1/2}}{\rho} \mathrm{d}\rho  \mathrm{d}s \nonumber
\\
&\leq C\int_{b}^\infty \left(\phi_{1}^{-1}(s^{-1})\right)^{1/2} s^{-1} \mathrm{d}s. \label{eqn 12.22.17:10}
\end{align}
Second, it is easy to see that
\begin{align}
&\int_{b}^\infty \int_{\kappa_1(b)}^{\kappa_1(s)} \rho^{d_{1}-1} \frac{\phi_{1}^{-1}(s^{-1})^{(d_{1}+1)/2 }}{s} \mathrm{d}\rho \mathrm{d}s \leq C \int_{b}^\infty \frac{\left(\phi_{1}^{-1}(s^{-1})\right)^{1/2}}{s} \mathrm{d}s.      \label{eqn 12.22.17:10-2}
\end{align}
Note that if  $s\geq b$, then by \eqref{phiratio} with $R=\phi_{1}^{-1}(b^{-1})$ and $r=\phi_{1}^{-1}(s^{-1})$, we have $\phi_{1}^{-1}(s^{-1})\leq bs^{-1} \phi_{1}^{-1}(b^{-1})$.
Therefore,
\begin{align}
    \label{eqn 12.22.17:40}
\int_{b}^\infty \frac{\left(\phi_{1}^{-1}(s^{-1})\right)^{\frac{1}{2}}}{s} \mathrm{d}s &\leq  2 \left( \phi_{1}^{-1}(b^{-1})\right)^{\frac{1}{2}+\frac{1}{2}} b^{1/2} 
\int_{b}^\infty s^{-3/2} \mathrm{d}s  = C(\kappa_{1}(b))^{-1}. 
\end{align}
Combining this with  \eqref{eqn 12.22.17:10} and \eqref{eqn 12.22.17:10-2}, and going back to \eqref{eqn 8.2.1},  we get  
$$
I_{3,1,1}(t,\vec{x},\vec{z}) \leq C \kappa_{1}(b) \|f\|_{L_\infty(\mathbb{R}^{d+1})} (\kappa_{1}(b))^{-1}=C\|f\|_{L_\infty(\mathbb{R}^{d+1})}. 
$$
For $I_{3,1,2}$,  using \eqref{eqn 01.06.17:16} and following the argument in \eqref{eqn 8.2.1}, we have
\begin{align}\label{eqn 12.26.13:59}
I_{3,1,2}(t,\vec{x},\vec{z}) \leq C \kappa_{2}(b) \|f\|_{L_{\infty}(\mathbb{R}^{d+1})} \int_{b}^{\infty} s^{-1} \int_{|y_{2}| \geq \kappa_{2}(b)} |\nabla p_{2}(t-s,y_{2})| \mathrm{d}y_{2} \mathrm{d}s.
\end{align}
Using \eqref{pestimate ineq}, it follows that
\begin{align*}
&\int_{b}^{\infty} s^{-1} \int_{|y_{2}| \geq \kappa_{2}(b)} |\nabla p_{2}(t-s,y_{2})| \mathrm{d}y_{2} \mathrm{d}s 
\\
&\leq C \int_{b}^\infty \left(\int_{\kappa_2(s)}^\infty  \frac{(\phi_{2}(\rho^{-2}))^{1/2}}{s^{1/2}\rho^2} \mathrm{d}\rho +  \int_{\kappa_2(b)}^{\kappa_2(s)} \rho^{d_{2}-1} \frac{(\phi_{2}^{-1}(s^{-1}))^{(d_{2}+1)/2}}{s} \mathrm{d}\rho\right)\mathrm{d}s .
\end{align*}
Thus, using \eqref{eqn 12.22.17:10}, \eqref{eqn 12.22.17:10-2}, and \eqref{eqn 12.22.17:40} with $\phi_{2}$ in place of $\phi_{1}$, we have 
$$
I_{3,1,2}(t,\vec{x},\vec{z}) \leq C \kappa_{2}(b) \|f\|_{L_\infty(\mathbb{R}^{d+1})} (\kappa_{2}(b))^{-1}=C\|f\|_{L_{\infty}(\mathbb{R}^{d+1})},
$$
and hence $I_{3,1}(t,\vec{x},\vec{z}) \leq C \|f\|_{L_{\infty}(\mathbb{R}^{d+1})}$. 

\textbf{Step 4-2.} Estimation of $I_{3,2}$.

Now we consider $I_{3,2}(t,x,\bar{x})$. Define $I_{3,2,1}$ and $I_{3,2,2}$ as correspondence of $I_{3,1,1}$ and $I_{3,1,2}$ in \eqref{eqn 12.26.11:48}. Note that we can handle $I_{3,2,1}$ using the argument used for $I_{3,1,1}$ (recall \eqref{eqn 8.2.1}). For $I_{3,2,2}$, using \eqref{eqn 01.06.17:16}, change of variables, and \eqref{pestimate ineq} we have
\begin{align*}
I_{3,2,2}(t,\vec{x},\vec{z}) &\leq C \kappa_{2}(b) \|f\|_{L_{\infty}(\mathbb{R}^{d+1})} \int_{b}^{\infty} s^{-1} \int_{|y_{2}| \leq 3\kappa_{2}(b)} |\nabla p_{2}(t-s,y_{2})| \mathrm{d}y_{2} \mathrm{d}s\\
&\leq C \kappa_{2}(b) \|f\|_{L_{\infty}(\mathbb{R}^{d+1})} \int_{b}^{\infty} \int_{0}^{3\kappa_2(s)} s^{-1}\rho^{d_{2}-1}(\phi^{-1}_{2}(s^{-1}))^{\frac{d_{1}+1}{2} } \mathrm{d}\rho \mathrm{d}s
\\
&\leq  C \kappa_{2}(b) \|f\|_{L_{\infty}(\mathbb{R}^{d+1})} \int_{b}^\infty \left(\phi_{2}^{-1}(s^{-1})\right)^{1/2} s^{-1} \mathrm{d}s.
\end{align*}
Thus by \eqref{eqn 12.22.17:40}, we have $I_{3,2,2}(t,\vec{x},\vec{z})\leq C \|f\|_{L_{\infty}(\mathbb{R}^{d+1})}$, and hence $I_{3,2,2}(t,x,\bar{x}) \leq C \|f\|_{L_{\infty}(\mathbb{R}^{d+1})}$ follows.

\textbf{Step 4-3.} Estimation of $I_{3,3}$.

Finally, we consider $I_{3,3}$. Similar to $I_{3,2}$, define $I_{3,3,1}$ and $I_{3,3,2}$. Then, by following the argument in \eqref{eqn 8.2.1}, we have
\begin{align*}
I_{3,3,1}(t,\vec{x},\vec{z}) &\leq \int_{-\infty}^{-2b}  \int_{(B^{1}_{2\kappa_{1}(b)})\times (B^{2}_{2\kappa_{2}(b)})^{c}} |\tilde{A}_{1}(t,s,\vec{x},\vec{z},\vec{y})\,\, f(s,\vec{y})| \mathrm{d}\vec{y} \mathrm{d}s  
\\
& \leq C \kappa_{1}(b) \|f\|_{L_\infty(\mathbb{R}^{d+1})}  \int_{-\infty}^{ -2b} \int_{|y_{1}|\leq 3\kappa_{1}(b)} |\nabla q_{1} (t-s,y_{1})| \mathrm{d}y_{1} \mathrm{d}s  
\\
& \leq C \kappa_{1}(b) \|f\|_{L_\infty(\mathbb{R}^{d+1})}  \int_{b}^{\infty} \int_{|y_{1}|\leq 3\kappa_{1}(b)} |\nabla q_{1} (s,y_{1})| \mathrm{d}y_{1} \mathrm{d}s.  
\end{align*}
Therefore, using \eqref{pestimate ineq} and \eqref{eqn 12.22.17:40}, we have
\begin{align*}
I_{3,3,1}(t,\vec{x},\vec{z}) &\leq  C \kappa_{1}(b) \|f\|_{L_\infty(\mathbb{R}^{d+1})}  \int_{b}^{\infty} \int_{|y_{1}|\leq 3\kappa_{1}(b)} |\nabla q_{1} (s,y_{1})| \mathrm{d}y_{1} \mathrm{d}s
\\
& \leq C \kappa_{1}(b) \|f\|_{L_\infty(\mathbb{R}^{d+1})}  \int_{b}^{\infty} \int_{0}^{3\kappa_1(s)} \rho^{d_{1}-1} s^{-1} (\phi^{-1}_{1}(s^{-1}))^{\frac{d_{1}+1}{2}} \mathrm{d}\rho  \mathrm{d}s
\\
& \leq C \kappa_{1}(b) \|f\|_{L_\infty(\mathbb{R}^{d+1})}  \int_{b}^{\infty} s^{-1} (\phi_{1}(s^{-1}))^{1/2}  \mathrm{d}s \leq C \|f\|_{L_\infty(\mathbb{R}^{d+1})}.
\end{align*}
Also, using \eqref{eqn 01.06.17:16}, change of variables, and \eqref{pestimate ineq}, we have
\begin{align*}
I_{3,3,2}(t,\vec{x},\vec{z}) \leq C \kappa_{2}(b) \|f\|_{L_{\infty}(\mathbb{R}^{d+1})} \int_{b}^{\infty} s^{-1} \int_{|y_{2}| \geq \kappa_{2}(b)} |\nabla p_{2}(t-s,y_{2})| \mathrm{d}y_{2} \mathrm{d}s,
\end{align*}
which can be handled as same as $I_{3,1,2}$ (recall \eqref{eqn 12.26.13:59}).
From this, we can handle $I_{3,3}$. The lemma is proved.
\end{proof}

We finish this section by proving Theorem \ref{23.03.07.13.46}.

\begin{proof}[Proof of Theorem \ref{23.03.07.13.46}]
We use the Fefferman-Stein theorem (see \textit{e.g.} \cite[Theorem I.3.1., Theorem IV.2.2.]{stein1993harmonic}) and the Marcinkiewicz interpolation theorem (see e.g. \cite[Theorem 1.3.2.]{grafakos2014classical}). We only remark that  due to (\ref{phiratio}),  the cubes $Q_b(s,y)$ satisfy the conditions (i)-(iv) in  \cite[Section 1.1] {stein1993harmonic} and map $f\mapsto \mathcal{G} f$ is sublinear.

\textbf{Step 1.} We prove (\ref{qpestimate}) for the case $p=q$. First assume that $p\geq 2$. Then by Lemma \ref{22estimate} and the Fefferman-Stein theorem, for any $f\in L_2(\mathbb{R}^{d+1}) \cap L_\infty(\mathbb{R}^{d+1})$, we have $
\|(\mathcal{G} f)^{\#}\|_{L_2(\mathbb{R}^{d+1})}\leq C \|f\|_{L_2(\mathbb{R}^{d+1})}$.
By (\ref{bmoestimate}),
$\|(\mathcal{G} f)^{\#}\|_{L_\infty(\mathbb{R}^{d+1})}\leq C \|f\|_{L_\infty(\mathbb{R}^{d+1})}$.
Hence by the Marcinkiewicz interpolation theorem, for any $p\in [2,\infty)$ there exists a constant $C$ such that $\|(\mathcal{G} f)^{\#}\|_{L_p(\mathbb{R}^{d+1})}\leq C \|f\|_{L_p(\mathbb{R}^{d+1})}$.
Finally, by the Fefferman-Stein theorem, we get \eqref{qpestimate} for the case $p=q\geq2$.

Now let $p\in(1,2)$. Take $f,g\in C_c^\infty (\mathbb{R}^{d+1})$ and $p'=\frac{p}{p-1}\in(2,\infty)$. Using the standard duality argument, we can check that
\begin{align} \label{duality}
&\int_{\mathbb{R}^{d+1}} g(t,\vec{x})\mathcal{G} f(t,\vec{x}) \mathrm{d}\vec{x} \mathrm{d}t = \int_{\mathbb{R}^{d+1}} \mathcal{G} \tilde{g}(t,\vec{x}) f(-t,-\vec{x})\mathrm{d}\vec{x} \mathrm{d}t,
\end{align}
where  $\tilde{g}(t,\vec{x})=g(-t,-\vec{x})$. By H\"older's inequality,
\begin{align*}
\left|\int_{\mathbb{R}^{d+1}} g(t,\vec{x})\mathcal{G} f(t,\vec{x}) \mathrm{d}\vec{x} \mathrm{d}t\right| \leq \|f\|_{L_p(\mathbb{R}^{d+1})}\|\mathcal{G} \tilde{g}\|_{L_{p'}(\mathbb{R}^{d+1})} \leq C \|f\|_{L_p(\mathbb{R}^{d+1})}\|g\|_{L_{p'}(\mathbb{R}^{d+1})}.
\end{align*}
Since $g\in C_c^\infty (\mathbb{R}^{d+1})$ is arbitrary,  we have $\mathcal{G} f\in L_{p}(\mathbb{R}^{d+1})$ and (\ref{qpestimate}) is also proved for $p\in(1,2)$.

\textbf{Step 2.} Due to the symmetry, we only consider $\mathcal{G}_{1}$ and $\ell=2$. Now we prove (\ref{qpestimate}) for general $p,q\in(1,\infty)$.  Define $q_{1}(t)p_{2}(t):=0$ for $t\leq 0$. For each $(t,s)\in\mathbb{R}^2$, we define the operator $\mathcal{K}(t,s)$ as follows:
\begin{equation*}
\mathcal{K}(t,s)f(\vec{x}):=\int_{\mathbb{R}^d} q_{1}(t-s,x_{1}-y_{1})p_{2}(t-s,x_{2}-y_{2})f(\vec{y}) \mathrm{d}\vec{y}, \quad f\in C_c^\infty(\mathbb{R}^{d}).
\end{equation*}
Let $p\in(1,\infty)$. Then, 
\begin{align*}
\|\mathcal{K}(t,s)f\|_{L_p}
\leq \|f\|_{L_p} \int_{\mathbb{R}^{d_{1}}}|q_{1}(t-s,y_{1})|\mathrm{d}y_{1} \int_{\mathbb{R}^{d_{2}}} p_{2}(t-s,y_{2}) \mathrm{d}y_{2}  \leq C(t-s)^{-1}\|f\|_{L_p}.
\end{align*}
Hence the operator $\mathcal{K}(t,s)$ is uniquenly extendible to $L_p(\mathbb{R}^{d})$ for $t\neq s$. Denote
\begin{equation*}
Q:=[t_0,t_0+\delta), \quad Q^*:=[t_0-\delta,t_0+2\delta), \quad  \delta>0.
\end{equation*}
Note that for $t\notin Q^*$ and $s_1,s_2\in Q$, we have $
|s_1-s_2|\leq\delta$ and $ |t-(t_0+\delta)|\geq\delta$.
Also for such $t,s_{1},s_{2}$, and for any $f\in L_p$ such that $\|f\|_{L_p}=1$, using Minkowski's inequality and the fundamental theorem of calculus, we have
\begin{align*}
&\|\mathcal{K}(t,s_1)f-\mathcal{K}(t,s_2)f\|_{L_p}
\\
&\leq  \int_{\mathbb{R}^{d_{1}}} \int_0^1 |\partial_{t}q_{1}(t-us_1-(1-u)s_2,y_{1})| |s_1-s_2| \mathrm{d}u \mathrm{d}y_{1}
\\
&\quad + C(t-s_{2})^{-1} \int_{\mathbb{R}^{d_{2}}} \int_0^1 |\partial_{t}p_{2}(t-us_1-(1-u)s_2,y_{2})| |s_1-s_2| \mathrm{d}u \mathrm{d}y_{2}
\\
&\leq \frac{C|s_1-s_2|}{(t-(t_0+\delta))^2},
\end{align*}
where the last inequality holds due to \eqref{eqn 01.06.17:16}. Here, recall that $\mathcal{K}(t,s)=0$ if $t\leq s$. Hence,
\begin{align*}
\|\mathcal{K}(t,s_1)-\mathcal{K}(t,s_2)\|_{\Lambda} \leq \frac{C|s_1-s_2|}{(t-(t_0+\delta))^2}.
\end{align*}
where $\|\cdot\|_{\Lambda}$ denotes the operator norm on $L_p(\mathbb{R}^{d})$. Therefore,
\begin{align*}
&\int_{\mathbb{R}\setminus Q^*} \|\mathcal{K}(t,s_1)-\mathcal{K}(t,s_2)\|_{\Lambda} \mathrm{d}t \leq C \int_{\mathbb{R}\setminus Q^*}\frac{|s_1-s_2|}{(t-(t_0+\delta))^2} \mathrm{d}t
\\
&\leq C|s_1-s_2|\int_{|t-(t_0+\delta)|\geq \delta}\frac{1}{(t-(t_0+\delta))^2} \mathrm{d}t \leq N\delta \int_\delta^\infty t^{-2}\mathrm{d}t \leq C.
\end{align*}
Furthermore, by following the argument in \cite[Section 7]{krylov2001caideron}, one can easily check that for almost every $t$ outside of the support of $f\in C_c^\infty(\mathbb{R};L_p(\mathbb{R}^{d}))$,
\begin{equation*}
\mathcal{G}_{1} f(t,\vec{x})=\int_{-\infty}^\infty \mathcal{K}(t,s)f(s,\vec{x})\mathrm{d}s
\end{equation*}
where $\mathcal{G}_{1}$ denotes the  extension  to  $L_p(\mathbb{R}^{d+1})$ which is verified in Step 1. Hence, by the Banach space-valued version of the Calder\'on-Zygmund theorem (\textit{e.g.} \cite[Theorem 4.1]{krylov2001caideron}), our assertion is proved for $1<q\leq p$.

For $1<p<q<\infty$, define $p'=\frac{p}{p-1}$ and $q'=\frac{q}{q-1}$. By (\ref{duality}) and H\"older's inequality,
\begin{align*} 
\left|\int_{\mathbb{R}^{d+1}} g(t,\vec{x})\mathcal{G} f(t,\vec{x}) \mathrm{d}\vec{x} \mathrm{d}t\right| &=\left| \int_\mathbb{R} \left(\int_{\mathbb{R}^{d}} \mathcal{G} \tilde{g}(s,\vec{y}) f(-s,-\vec{y})\mathrm{d}\vec{y} \right)\mathrm{d}s\right|
\\
&\leq  \int_{\mathbb{R}} \|f(-s,\cdot)\|_{L_p}\|\mathcal{G} \tilde{g}(s,\cdot)\|_{L_{p'}} \mathrm{d}s 
\\
&\leq N \|f\|_{L_q(\mathbb{R};L_p(\mathbb{R}^{d}))} \|g\|_{L_{q'}(\mathbb{R};L_{p'}(\mathbb{R}^{d}))}
\end{align*}
for any $g\in C_c^\infty(\mathbb{R}^{d+1})$, where the last inequality holds due to $1<q'<p'$. Since $g$ is arbitrary, we have \eqref{qpestimate} for all $p,q\in(1,\infty)$.
The theorem is proved.
\end{proof}

\mysection{Applications}
In this section, we explore various applications derived from our main results presented in Theorem \ref{main theorem}. We begin by investigating the solvability of the elliptic equation.
\begin{theorem}[Elliptic equation]
\label{main theorem elliptic}
Let $p\in(1,\infty)$ and $\gamma\in\mathbb{R}$. 
Suppose that $\vec{\phi}=(\phi_1,\cdots,\phi_{\ell})$ is a vector of Bernstein functions satisfying Assumption \ref{23.03.03.16.04} with drift $\vec{b}_{0}=(b_{01},\dots,b_{0\ell})$ and vector of L\'evy measures $\vec{J}(\mathrm{d}\vec{y})$ defined in \eqref{23.03.04.19.19}.
For a vector of measurable function
$$
\vec{a}(\vec{y}):=(a_1(y_1),\cdots,a_{\ell}(y_{\ell}))
$$
satisfying $a_i(y_{i}),\in[c_1,c_1^{-1}]$ for all $i=1,\cdots,\ell$ 
with a positive constant $c_{1}$, define the operator $\mathcal{L}^{\vec{a},\vec{b}}$ as
$$
\mathcal{L}^{\vec{a},\vec{b}}u(\vec{x}):=\vec{b}_0\cdot\Delta_{\vec{d}}u(\vec{x})+\int_{\mathbb{R}^{\vec{d}}}(u(\vec{x}+\vec{y})-u(\vec{x})-\vec{y}\cdot\nabla_{\vec{x}}u(\vec{x})\mathbf{1}_{|\vec{y}|\leq1})\vec{a}(\vec{y})\cdot\vec{J}(\mathrm{d}\vec{y}).
$$
Then for any $\lambda>0$, and $f\in H_{p}^{\vec{\phi},\gamma}$, the equation 
\begin{equation}\label{mainequation elliptic}
\mathcal{L}^{\vec{a},\vec{b}}u - \lambda u =f
\end{equation}
admits a unique solution  $u\in H^{\vec{\phi},\gamma+2}_{p}$, and we have
\begin{equation*}
\|u\|_{H^{\vec{\phi},\gamma+2}_{p}} \leq C \|f\|_{H^{\vec{\phi},\gamma}_{p}},
\end{equation*}
where $C=C(\vec{b}_0,d,c_0,c_1,\delta_0,p,\gamma,\lambda)$.
Moreover,
\begin{equation*}
\|(\vec{\phi}\cdot\Delta_{\vec{d}}) u\|_{H_{p}^{\vec{\phi},\gamma}}+\lambda\| u\|_{H_{p}^{\vec{\phi},\gamma}}\leq C_0 \|f\|_{H_{p}^{\vec{\phi},\gamma}},
\end{equation*}
where $C_0=C_0(d,c_0,\delta_0,p,\gamma)$.
\end{theorem}
\begin{proof}
Due to the isometry $(1-\vec{\phi}\cdot\Delta_{\vec{d}})^{-\gamma/2}:H_p^{\vec{\phi},\gamma}\to L_p$, we only consider the case $\gamma=0$. Also, we may further assume that $f\in C^{\infty}_{c}(\mathbb{R}^{d})$. Let $\lambda>0$ be given. 
Define
\begin{equation*}
    G_{\lambda}(\vec{x}):=\int_0^{\infty}\mathrm{e}^{-\lambda t}p(t,\vec{x})\mathrm{d}t.
\end{equation*}
Here $p$ is the transition density function of a L\'evy process $X$ whose triplet is $(0,D,\nu)$, where
$$
D:=\diag(\vec{b}_0),\quad \nu(B):=\int_{B}\vec{a}(\vec{y})\cdot\vec{J}(\mathrm{d}\vec{y}).
$$
Now, we let $u:=G_{\lambda}\ast(-f)$. Clearly, $u$ is a classical solution to 
$$
\mathcal{L}^{\vec{a},\vec{b}}u-\lambda u=f
$$
and we have
\begin{align}
\label{23.10.25.23.40}
\lambda\|u\|_{L_{p}} \leq \lambda\|G_{\lambda}\|_{L_{1}} \|f\|_{L_{p}} \leq\|f\|_{L_{p}}.
\end{align}
If we take $v(t,x) = \mathrm{e}^{\lambda t}u(x) - u(x)$, then $v$ satisfies \eqref{mainequation1} with $-\mathrm{e}^{\lambda t}f + \mathcal{L}^{\vec{a},\vec{b}}u$ in place of $f$.
By Theorem \ref{main theorem} and Lemma \ref{23.10.25.16.59}, if we let $g(T) = \int_{0}^{T}\mathrm{e}^{\lambda p t} \mathrm{d}t$, then we have
$$
g(T) \times \| (\vec{\phi}\cdot \Delta_{\vec{d}})u \|^{p}_{L_{p}}  \leq C\left( g(T) \times  \| f \|^{p}_{L_{p}}+ T  \| (\vec{\phi}\cdot \Delta_{\vec{d}})u \|^{p}_{L_{p}} \right).
$$
By letting $T$ sufficiently large so that $g(T) >2CT$, we prove the estimation, and thus theorem.
\end{proof}

The following theorem establishes the solvability of parabolic equations with the infinitesimal generators of SBMs as the second application of our main results. Note that our proof gives a simpler way without revisiting the proof of Theorem \ref{23.03.07.13.46}.

\begin{theorem}
Let $1<p,q<\infty$, $\gamma\in\mathbb{R}$, and $0<T<\infty$. Suppose that $\phi$ is a Bernstein function with drift $b_{0}$ satisfying scaling condition \eqref{e:H}. For measurable functions
\begin{equation*}
    a(t,\vec{y})\in[c_1,c_1^{-1}],\quad b(t) \in [c_{1}b_{0},c_{1}^{-1}b_{0}]  \quad \forall t\in\mathbb{R}_+,
\end{equation*}
define the operator $\mathcal{L}^{a,b}(t)$ follows
$$
\mathcal{L}^{a,b}(t)h(\vec{x}):=b(t)\Delta h(\vec{x})+\int_{\mathbb{R}^{d}}(h(\vec{x}+\vec{y})-h(\vec{x})-\nabla_{\vec{x}}h(\vec{x})\cdot \vec{y}\mathbf{1}_{|\vec{y}|\leq1})a(t,\vec{y})j(|\vec{y}|)\mathrm{d}\vec{y}.
$$
Here, $j(|\vec{y}|)$ is a jump kernel of $\phi(\Delta)$.
Then for any $f\in H_{q,p}^{\phi,\gamma}(T)$, the equation
\begin{equation*}
\partial_t  u(t,\vec{x}) = \mathcal{L}^{a,b}(t)u(t,\vec{x}) + f(t,\vec{x}),\quad t>0,\vec{x}\in\mathbb{R}^{\vec{d}}\,; \quad u(0,\vec{x})=0,\quad \vec{x}\in\mathbb{R}^{\vec{d}}
\end{equation*}
admits a unique solution $u$ in the class $\mathbb{H}_{q,p,0}^{\phi,\gamma+2}(T)$, and   we have
\begin{equation*}
\|u\|_{\mathbb{H}_{q,p}^{\phi,\gamma+2}(T)}\leq C  \|f\|_{H_{q,p}^{\phi,\gamma}(T)},
\end{equation*}
where $C=C(d,\delta_0,c_0,c_1,p,q,\gamma,T)$.
Moreover,
\begin{equation*}
\|\phi(\Delta) u\|_{H_{q,p}^{\phi,\gamma}(T)}+\|\mathcal{L}^{a,b} u\|_{H_{q,p}^{\phi,\gamma}(T)}\leq C_0 \|f\|_{H_{q,p}^{\phi,\gamma}(T)},
\end{equation*}
where $C_0=C_0(d,\delta_0,c_0,c_1,p,q,\gamma)$.
\end{theorem}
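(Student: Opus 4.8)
The plan is to derive this theorem as the special case $\ell=1$ of Theorem \ref{main theorem}, so that none of the heat-kernel estimates or the BMO/Calder\'on--Zygmund machinery of Section \ref{23.03.08.15.15} (in particular Theorem \ref{23.03.07.13.46}) needs to be re-run. Concretely, I would invoke Theorem \ref{main theorem} with $\ell=1$, $d_1=d$, $\vec{\phi}=(\phi)$, $\vec{b}_0=(b_0)$, $\vec{a}(t,\vec{y})=(a(t,\vec{y}))$, and $\vec{b}(t)=(b(t))$, and then verify that every object occurring there collapses to the corresponding object in the present statement.

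First, for $\ell=1$, Assumption \ref{23.03.03.16.04} is precisely the requirement that $\phi$ satisfy the scaling condition \eqref{e:H}, and the coefficient bounds \eqref{23.03.06.12.24} become $a(t,\vec{y})\in[c_1,c_1^{-1}]$ and $b(t)\in[c_1b_0,c_1^{-1}b_0]$; thus the hypotheses match exactly. Second, by Remark \ref{rmk 04.05.15:28} we have $\vec{\phi}\cdot\Delta_{\vec{d}}=\phi(\Delta)$ when $\ell=1$, so $\mathbb{H}^{\vec{\phi},\gamma+2}_{q,p,0}(T)$, $H^{\vec{\phi},\gamma+2}_{q,p}(T)$, $H^{\vec{\phi},\gamma}_{q,p}(T)$ become $\mathbb{H}^{\phi,\gamma+2}_{q,p,0}(T)$, $H^{\phi,\gamma+2}_{q,p}(T)$, $H^{\phi,\gamma}_{q,p}(T)$. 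Third, when $\ell=1$ and $d_1=d$ the Dirac factor $\epsilon_0^i$ in \eqref{23.03.04.19.19} is absent, so the vector of L\'evy measures degenerates to $\vec{J}(\mathrm{d}\vec{y})=J_1(\vec{y})\,\mathrm{d}\vec{y}$ with $J_1(\vec{y})=\int_{(0,\infty)}(4\pi t)^{-d/2}\mathrm{e}^{-|\vec{y}|^2/(4t)}\mu(\mathrm{d}t)$, and by the jump-kernel formula in \eqref{fourier200408} this is exactly $j(|\vec{y}|)$. Consequently the operator $\mathcal{L}^{\vec{a},\vec{b}}(t)$ of Theorem \ref{main theorem} reduces to $b(t)\Delta h(\vec{x})+\int_{\mathbb{R}^d}(h(\vec{x}+\vec{y})-h(\vec{x})-\nabla_{\vec{x}}h(\vec{x})\cdot\vec{y}\mathbf{1}_{|\vec{y}|\leq1})\,a(t,\vec{y})\,j(|\vec{y}|)\,\mathrm{d}\vec{y}$, i.e.\ precisely $\mathcal{L}^{a,b}(t)$.

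Granting these identifications, Theorem \ref{main theorem} applied to $f\in H^{\phi,\gamma}_{q,p}(T)$ immediately provides a unique $u\in\mathbb{H}^{\phi,\gamma+2}_{q,p,0}(T)$ solving the equation, with $\|u\|_{\mathbb{H}^{\phi,\gamma+2}_{q,p}(T)}\leq C\|f\|_{H^{\phi,\gamma}_{q,p}(T)}$ and, using Lemma \ref{23.10.25.16.59} exactly as in the proof of Theorem \ref{main theorem}, $\|\phi(\Delta)u\|_{H^{\phi,\gamma}_{q,p}(T)}+\|\mathcal{L}^{a,b}u\|_{H^{\phi,\gamma}_{q,p}(T)}\leq C_0\|f\|_{H^{\phi,\gamma}_{q,p}(T)}$. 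To see that the constants may be written without a separate dependence on $b_0$ as claimed, one first reduces to $\gamma=0$ exactly as in the proof of Theorem \ref{main theorem}, and then, when $b_0>0$, rescales $\phi\mapsto\phi/b_0$ — which preserves $c_0$ and $\delta_0$ and normalizes the drift to $1$ — together with a time dilation that only affects the admissible $T$-dependence. I do not anticipate any genuine obstacle here; the one point that deserves care is the bookkeeping in the previous paragraph, and especially the verification through \eqref{fourier200408} that the degenerate measure $\vec{J}$ arising from the $\ell=1$ construction is the isotropic kernel $j(|\cdot|)$ — which is also the reason the argument is ``simpler'', since Section \ref{23.03.08.15.15} need not be entered at all.
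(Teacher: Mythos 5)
Your proposal is correct but takes a genuinely different route from the paper's proof. You derive the theorem by taking $\ell=1$, $d_1=d$ in Theorem \ref{main theorem}, and all of your identifications check out: for $\ell=1$ the degenerate Dirac factor in \eqref{23.03.04.19.19} is trivial so $\vec{J}(\mathrm{d}\vec{y})=J_1(\vec{y})\,\mathrm{d}\vec{y}=j(|\vec{y}|)\,\mathrm{d}\vec{y}$ by \eqref{fourier200408}, $\vec{\phi}\cdot\Delta_{\vec{d}}=\phi(\Delta)$ by Remark \ref{rmk 04.05.15:28}, the solution spaces coincide, and \eqref{23.03.06.12.24} reduces to the stated coefficient bounds. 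The paper, by contrast, explicitly avoids the $\ell=1$ specialization (``our proof gives a simpler way without revisiting the proof of Theorem \ref{23.03.07.13.46}''), because Section \ref{23.03.08.15.15} carries out the BMO/Calder\'on--Zygmund argument only for $\ell=2$, with Remark \ref{rmk 02.16.17:37} extending it to $\ell\geq 3$ but not mentioning $\ell=1$. Instead, the paper takes $\ell$ copies of $\phi$ (implicitly $\ell\geq 2$), introduces an auxiliary L\'evy process $\vec{Y}$ with triplet $(0,0,\nu)$ where $\nu(\mathrm{d}\vec{y})=j(|\vec{y}|)\,\mathrm{d}\vec{y}-\vec{1}\cdot\vec{J}(\mathrm{d}\vec{y})$, solves an anisotropic equation along $\vec{Y}$-trajectories as in Case~2 of the proof of Theorem \ref{main theorem}, and averages to produce the solution for $\phi(\Delta)$.

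The trade-off is worth naming. Your $\ell=1$ route implicitly relies on the $\ell=1$ case of Theorem \ref{23.03.07.13.46}, which the paper does not write out; in practice that case is a strictly simpler version of the $\ell=2$ argument (the $p_2$ factor disappears, the terms $I_{1,2}$, $\tilde{A}_2$, $I_{3,2}$, $I_{3,3}$ in the proof of Lemma \ref{outwholeestimate} are absent, and all kernel estimates in Lemma \ref{gammaversionj} and Theorem \ref{pestimate} are stated per component and apply with $d_i=d$), so this is a bookkeeping remark rather than a genuine gap. The paper's route avoids verifying $\ell=1$, but it introduces a real subtlety: for $\ell\geq 2$ the measure $\vec{1}\cdot\vec{J}$ is singular with respect to Lebesgue measure (it is concentrated on the coordinate hyperplanes), whereas $j(|\vec{y}|)\,\mathrm{d}\vec{y}$ is absolutely continuous, so the difference $\nu=j\,\mathrm{d}\vec{y}-\vec{1}\cdot\vec{J}$ is a signed measure with nontrivial negative part; equivalently, by subadditivity of the concave $\phi$ one has $\phi(|\vec{\xi}|^2)\leq\sum_i\phi(|\xi_i|^2)$, so the would-be characteristic exponent $\phi(|\vec{\xi}|^2)-\sum_i\phi(|\xi_i|^2)$ of $\vec{Y}$ is nonpositive and $\exp$ of its negative exceeds $1$. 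As written, the L\'evy process $\vec{Y}$ therefore does not exist, and Lemma \ref{probrep} cannot be invoked for it, so the paper's probabilistic bridge as stated does not close. Your direct specialization sidesteps this entirely. One small remark: the rescaling $\phi\mapsto\phi/b_0$ you propose to eliminate the $b_0$-dependence of the constant is likely unnecessary; tracing the constants through Theorem \ref{pestimate}, Theorem \ref{23.02.22.17.33}, Theorem \ref{23.03.07.13.46} and Lemma \ref{23.10.25.16.59} shows they depend only on $(d,c_0,\delta_0,p,q,c_1)$, so the $\vec{b}_0$ listed in the constants of Theorem \ref{main theorem} is spurious and drops out automatically when $\ell=1$.
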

\begin{proof}
We only consider the estimation of solutions when $\gamma=0$ (recall Remark \ref{rmk 04.05.15:28}), and $a(t,\vec{y})=1$ and $b(t)=b_0$ because we can handle the general case by following the proof of Theorem \ref{main theorem}. 

Let $\vec{\phi}=(\phi_{1},\dots,\phi_{\ell})$ with $\phi_{i}=\phi$ for all $i=1,\dots,\ell$, and let $\vec{J}(\mathrm{d}\vec{y})$ be a vector of L\'evy measure defined in \eqref{23.03.04.19.19}.
Then, there exists a L\'evy process $\vec{Y}$ with triplet $(0,0,\nu)$, where $
\nu(\mathrm{d}\vec{y}):=j(|\vec{y}|)\mathrm{d}\vec{y}-\vec{1}\cdot\vec{J}(\mathrm{d}\vec{y})$.
By Lemma \ref{probrep},
$$
v_n(t,\vec{x},\omega):=\int_0^t\mathbb{E}_{\omega'}[f_n(s,\vec{x}-\vec{Y}_{s}(\omega)+\vec{X}_{t-s}(\omega'))]\mathrm{d}s \in C^{1,\infty}_{p}([0,T]\times \mathbb{R}^{d}) \subset \mathbb{H}^{\vec{\phi},2}_{q,p}(T)
$$
is a unique classical solution of
$$
\partial_{t}v_n(t,\vec{x},\omega)=(\vec{\phi}\cdot \Delta_{\vec{d}})v_n(t,\vec{x},\omega)+f_n(t,\vec{x}-\vec{Y}_t(\omega)),
$$
where $f_n\in C_p^{\infty}([0,T]\times\mathbb{R}^d)$ is an approximating sequence of $f$ in $L_{q,p}(T)$ and $\vec{X}=(X^1,\cdots,X^{\ell})$ is an IASBM corresponding to $\vec{\phi}$.  Hence, by Theorem \ref{main theorem}, $v_{n}$ satisfies
\begin{equation}\label{eqn 04.05.15:05}
\|v_{n}\|_{H^{\vec{\phi},2}_{q,p}(T)} \leq C \|f_{n}\|_{L_{q,p}(T)}.
\end{equation}
Let $u_n(t,\vec{x}):=\mathbb{E}_{\omega}[v_n(t,\vec{x}+\vec{Y}_t(\omega),\omega)]$.
Clearly, $\{u_n\}_{n\in\mathbb{N}}\subseteq C_p^{1,\infty}([0,T]\times\mathbb{R}^d)$ and $u_n(0,\vec{x})=0$. Using Fubini's theorem, we obtain
\begin{align*}
    \mathcal{F}_d[u_n(t,\cdot)](\vec{\xi})&=\mathbb{E}_{\omega}[\mathrm{e}^{i\vec{\xi}\cdot\vec{Y}_t(\omega)}\mathcal{F}_d[v_n(t,\cdot,\omega)](\vec{\xi})]\\
    &=\int_0^t\mathbb{E}_{\omega}[\mathrm{e}^{i\vec{\xi}\cdot(\vec{Y}_t(\omega)-\vec{Y}_s(\omega))}]\mathbb{E}_{\omega'}[\mathrm{e}^{i\vec{\xi}\cdot\vec{X}_{t-s}(\omega')}]\mathcal{F}_{d}[f_n(s,\cdot)](\vec{\xi})\mathrm{d}s\\
    &=\mathcal{F}_d\left[\int_0^t\mathbb{E}[f_n(s,\cdot+\vec{Z}_{t-s})]\mathrm{d}s\right](\vec{\xi}),
\end{align*}
where $\vec{Z}$ is an SBM corresponding to $\phi$. Therefore, by Lemma \ref{probrep}, $u_n$ is a unique classical solution of
\begin{equation*}
    \begin{cases}
    \partial_tu_n(t,\vec{x})=\phi(\Delta)u_n(t,\vec{x})+f_n(t,\vec{x})\quad &(t,\vec{x})\in(0,T)\times\mathbb{R}^{\vec{d}},\\
    u_n(0,\vec{x})=0\quad &\vec{x}\in\mathbb{R}^{\vec{d}}.
    \end{cases}
\end{equation*}
By \eqref{eqn 04.05.15:05} and Minkowski's inequality, $
\|u_n\|_{H_{q,p}^{\vec{\phi},2}(T)}\leq  \|v_{n}\|_{H_{q,p}^{\vec{\phi},2}(T)} \leq C\|f_n\|_{L_{q,p}(T)}$.
By \eqref{eqn 04.19.17:54},
$\|\phi(\Delta)u_n\|_{L_{q,p}(T)}\leq C \|(\vec{\phi}\cdot \Delta_{\vec{d}})u_{n}\|_{L_{q,p}(T)}\leq  C\|f_n\|_{L_{q,p}(T)}$.
This certainly implies the existence and estimates.
The theorem is proved.
\end{proof}

\end{document}